\theoremstyle{plain}
\newtheorem{theorem}{Theorem}[section]
\newtheorem{corollary}[theorem]{Corollary}
\newtheorem{lemma}[theorem]{Lemma}
\newtheorem{proposition}[theorem]{Proposition}
\theoremstyle{definition}
\newtheorem{definition}[theorem]{Definition}
\newtheorem{example1}[theorem]{Example}
\newtheorem{example}{Example}
\theoremstyle{remark}
\newtheorem{remark}[theorem]{Remark}
\newcommand{\codim}{\text{codim}}
\newcommand{\onto}{\twoheadrightarrow}
\newcommand{\td}[1]{\tilde{#1}}
\newcommand{\into}{\hookrightarrow}
\newcommand{\Z}{\mathbb{Z}}
\newcommand{\Q}{\mathbb{Q}}
\newcommand{\R}{\mathbb{R}}
\newcommand{\D}{\mathbb{D}}
\newcommand{\bd}{\partial}
\newcommand{\pf}{\pitchfork}
\newcommand{\mc}[1]{\mathcal{#1}}
\newcommand{\dlim}{\varinjlim}
\newcommand{\Hom}{\text{Hom}}
\newcommand{\mf}{\mathfrak}
\newcommand{\im}{\text{im}}
\newcommand{\Ker}{\mbox{Kernel }}
\newcommand{\pfa}{\pf}
\newcommand{\pfb}{\hat \pf}
\newcommand{\pfc}{\bar \pf}
\begin{document}
\title{Additivity and non-additivity for perverse signatures}
\author{
Greg Friedman \thanks{Partially supported by MSRI}\\ Texas Christian University
\and
Eugenie Hunsicker \thanks{Partially supported by
MSRI}
\\ Loughborough University
}
\date{\today}


\maketitle

\textbf{2000 Mathematics Subject Classification:} Primary: 55N33, 57R20;
Secondary: 53D12, 15A63

\textbf{Keywords:} intersection homology, signature, perverse signature, Novikov additivity, Wall non-additivity, Maslov index

\begin{abstract}
A well-known property of the signature of closed oriented 4n-dimensional manifolds is Novikov additivity, which states that if a manifold is split into two manifolds with boundary along an oriented smooth hypersurface, then the signature of the original manifold equals the sum of the signatures of the resulting manifolds with boundary.  Wall showed that this property is not true of signatures on manifolds with boundary and that the difference from additivity could be described as a certain Maslov triple index. Perverse signatures are signatures defined for any oriented stratified pseudomanifold, using the intersection homology groups of Goresky and MacPherson.  In the case of Witt spaces, the middle perverse signature is the same as the Witt signature.  This paper proves a generalization to perverse signatures of Wall's non-additivity theorem for signatures of manifolds with boundary.  Under certain topological conditions on the dividing hypersurface, Novikov additivity for perverse signatures  may be deduced as a corollary.  In particular, Siegel's version of Novikov additivity for Witt signatures is a special case of this corollary.
\end{abstract}
\tableofcontents

\section{Introduction}

The signature of compact $4n$-dimensional oriented manifolds is an interesting and important 
manifold invariant.  It satisfies a number of remarkable properties commonly referred to as
the `signature package'.  These include cobordism invariance \cite{Thom}, equality to the index
of the signature operator and to the $L$-genus \cite{Hirz}, and Novikov additivity \cite{AS}. 
Signature has been used to prove 
various obstruction theorems.  For instance, Rokhlin's theorem (\cite{Ro}) shows that for
a $4n$-dimensional smooth compact oriented manifold to carry a spin-structure, its signature
must be divisible by 16.  Following on the heels of the successes in topology and geometric
topology of smooth compact manifolds in the 1950s and 1960s, including this work
on the signature package, mathematicians began to explore which of the results
from the smooth compact setting might be generalized to the setting of singular spaces.
In the years since then, there have been a number of interesting developments, such as the theory of intersection homology signatures on Witt spaces. In \cite{Hun07}, the
second author of this paper defined a family of `perverse signatures',  based on the intersection homology groups of Goresky and MacPherson, that may be defined
for any oriented $4n$-dimensional closed stratified pseudomanifold, though
this signature cannot be a bordism-invariant of all oriented closed pseudomanifolds, as is evident by considering the cone on a manifold with nonzero signature.
In this paper, we identify when a generalization 
of Novikov additivity holds for these signatures, as well as identifying the additivity defect 
in the case that it does not.  In future papers, we will explore what further aspects of the signature
package hold for perverse signatures.

More details will be given below, but to explain briefly our main results, Theorem \ref{T: Wall} and Corollary
\ref{C: cor}, recall that for a closed oriented $n$-dimensional pseudomanifold $X$ and perversity parameters $\bar p, \bar q$ such that $\bar p+\bar q=\bar t$, there is a duality isomorphism of intersection homology groups\footnote{If $X$ has no codimension one strata and the perversity parameters $\bar p$ and $\bar q$ satisfy the conditions of Goresky and MacPherson \cite{GM1}, these are the intersection homology groups of Goresky and MacPherson \cite{GM1,GM2}. For more general perversities or pseudomanifolds with codimension one strata, these are the intersection homology groups with ``stratified coefficients'' of the first author; see \cite{GBF10,GBF23,GBF26}. We shall follow the practice of \cite{GBF25} and omit the symbol $G_0$ utilized previously. Note, however, that in general these groups will depend on the stratification of $X$. Furthermore, if $X$ has codimension one strata, ``closed'' here really means ``s-closed'' as defined below in Section \ref{S: Review}. We omit that notation here for the sake of simplicity in the Introduction.} with rational\footnote{Throughout the paper, all results stated for $\Q$ would also hold for coefficients in $\R$.} coefficients,

 $$I^{\bar p}H_i(X;\Q)\cong \Hom(I^{\bar q}H_{n-i}(X;\Q),\Q),$$ determined by the intersection pairing $$\pfb: I^{\bar p}H_i(X;\Q)\otimes I^{\bar q}H_{n-i}(X;\Q)\to \Q.$$ In the most well-known case, if $X$ is a $4n$-dimensional Witt space, which implies that $I^{\bar m}H_*(X;\Q)\cong I^{\bar n}H_{*}(X;\Q)$ for the lower-middle and upper-middle perversities $\bar m$ and $\bar n$, then one obtains a symmetric middle-dimensional self-pairing $I^{\bar m}H_{2n}(X;\Q)\otimes I^{\bar m}H_{2n}(X;\Q)\to \Q$, and hence a signature invariant. This is the well-known Witt signature. More generally, though, it is possible to define signatures on any 
closed oriented $4n$-dimensional pseudomanifold as follows:
If $\bar p+\bar q=\bar t$ and $\bar p(k)\leq \bar q(k)$ for all $k$, then there is a map $I^{\bar p}H_*(X;\Q)\to I^{\bar q}H_*(X;\Q)$, and this induces a nonsingular symmetric pairing on $\im(I^{\bar p}H_{2n}(X;\Q)\to I^{\bar q}H_{2n}(X;\Q))$ (see Section \ref{S: perv sig} for full details). We refer to signatures of such pairings as \emph{perverse signatures} $\sigma_{\bar p\to \bar q}(X)$ and note that the Witt space signature is a special case. Similarly, in analogy with the case for manifolds, there is also a signature on compact oriented pseudomanifolds with boundary with notation $\sigma_{\bar p\onto \bar q}(X)$.

Our main results are to extend to this setting the famous Novikov additivity and Wall non-additivity theorems. In particular we have the following (which occurs below as Theorem \ref{T: Wall}):

\begin{theorem}\label{T: wall intro}
 Let $Z\subset X$ be a bicollared codimension one subpseudomanifold of the closed oriented $4n$-pseudomanifold $X$ such that $X=Y_1\cup_Z Y_2$ and $\bd Y_1=Z=-\bd Y_2$, accounting for orientations. Then
$$\sigma_{\bar p\to\bar q}(X)=\sigma_{\bar p\onto\bar q}(Y_1)+\sigma_{\bar p\onto\bar q}(Y_2)+ \sigma(V;A,B,C).$$ 
\end{theorem}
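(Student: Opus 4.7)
The plan is to follow the overall architecture of Wall's original non-additivity argument, translating each piece into the intersection homology setting governed by the two perversities $\bar p$ and $\bar q$. Wall's strategy, stripped to essentials, is (i) to isolate a symplectic vector space $V$ built from the middle-dimensional homology of the dividing hypersurface, (ii) to locate three Lagrangian subspaces $A,B,C$ of $V$ coming respectively from $Y_1$, $Y_2$, and the kernel/image data on $Z$, and (iii) to do linear algebra comparing the pairings on $X$, $Y_1$, and $Y_2$, identifying the discrepancy as the Maslov triple index $\sigma(V;A,B,C)$. We mimic this recipe with the map $I^{\bar p}H_*\to I^{\bar q}H_*$ replacing the usual middle homology throughout.

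First, I would assemble the relevant long exact sequences. Since $Z$ is bicollared, an open neighborhood of $Z$ splits as $Z\times(-1,1)$, so the standard Mayer--Vietoris and long exact sequence of a pair machinery are available for intersection homology, with care taken to use the ``stratified coefficients'' conventions referenced in the footnote so that the duality pairing between $\bar p$ and $\bar q$ really is perfect. The key diagrams are the two long exact sequences of pairs $(Y_i,Z)$ for the perversities $\bar p$ and $\bar q$, and the Mayer--Vietoris square comparing $X$ to $Y_1\sqcup Y_2$ glued along $Z$. These should fit into a commutative ladder whose vertical arrows are the natural transformations $I^{\bar p}H_*\to I^{\bar q}H_*$. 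Once this is in place, the image complex $\im(I^{\bar p}H_{2n}\to I^{\bar q}H_{2n})$ that defines $\sigma_{\bar p\to\bar q}$ on $X$, $Y_1$, and $Y_2$ can all be compared simultaneously.

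Second, I would identify the symplectic data. The natural candidate for $V$ is the image of $I^{\bar p}H_{2n-1}(Z;\Q)\to I^{\bar q}H_{2n-1}(Z;\Q)$ on the hypersurface, which carries a (skew-)symmetric nondegenerate pairing coming from Poincar\'e--Lefschetz duality on $Z$ (an oriented $(4n-1)$-dimensional bicollared subpseudomanifold). The three Lagrangian subspaces should be $A=\ker(V\to I^{\bar q}H_{2n-1}(Y_1))$, $B=\ker(V\to I^{\bar q}H_{2n-1}(Y_2))$, and $C$ the subspace naturally associated to the ``diagonal'' kernel coming from the fact that a class in $V$ bounding into $X$ must bound in both pieces after a Mayer--Vietoris adjustment. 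Checking that these are indeed isotropic --- hence Lagrangian by dimension count via duality --- is the first substantive piece of linear algebra: isotropy should follow formally from the compatibility of the $I^{\bar p}$/$I^{\bar q}$ pairings under the boundary map and the identification $\bd Y_1=Z=-\bd Y_2$.

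Finally, with $V$ and the Lagrangians in hand, I would reduce the comparison of signatures to a purely linear-algebra statement about a symplectic space with three Lagrangians, namely Wall's original lemma, which one can cite or adapt directly. Concretely, I would decompose $I^{\bar p}H_{2n}(X;\Q)\to I^{\bar q}H_{2n}(X;\Q)$ into pieces coming from $Y_1$, from $Y_2$, and from a correction accounting for classes that only appear after gluing; the pairing splits accordingly, and the defect term computes as $\sigma(V;A,B,C)$. The main obstacle I anticipate is not conceptual but technical: verifying that the two perversities $\bar p$ and $\bar q$ interact correctly with the boundary connecting maps so that the Lagrangian property is exact (not off by some image/kernel of a perversity-changing map), and making sure the stratified-coefficient conventions for pseudomanifolds with codimension-one strata give a genuinely perfect pairing on $V$. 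Once that bookkeeping is clean, the Maslov-index endgame is essentially Wall's.
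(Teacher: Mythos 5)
There is a genuine gap, and it occurs at the very first substantive choice: your candidate for the symplectic space $V$ cannot work. You propose $V=\im\bigl(I^{\bar p}H_{2n-1}(Z;\Q)\to I^{\bar q}H_{2n-1}(Z;\Q)\bigr)$ with ``the'' intersection pairing from Poincar\'e--Lefschetz duality on $Z$. But $Z$ is a closed $(4n-1)$-dimensional pseudomanifold, so duality pairs degree $i$ with degree $4n-1-i$; there is no intersection self-pairing on degree $2n-1$ classes (that would require $\dim Z=4n-2$, which is Wall's corner $P$ in the manifold-with-boundary setting, a space that simply does not exist here since $X$ is closed and $Z$ has empty boundary). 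A quick consistency check shows the space itself is also wrong, not just the pairing: if $Z$ is Witt (or an unstratified manifold), $I^{\bar m}H_{2n-1}(Z)\to I^{\bar n}H_{2n-1}(Z)$ is an isomorphism, so your $V$ is typically nonzero, yet Siegel/Novikov additivity holds and the correction term must vanish. The correct object, and the one the theorem statement refers to, is the relative-perversity group $V=I^{\bar q/\bar p}H_{2n}(Z;\Q)$, the homology of $I^{\bar q}C_*(Z)/I^{\bar p}C_*(Z)$, which vanishes exactly under the surjectivity/injectivity hypotheses of the additivity corollary. Its skew pairing is not an intersection pairing but a linking-type pairing $\Phi([x],[y])=\epsilon[\,x\pfa \bd y+(-1)^{n-|x|}(\bd x)\pfa y\,]$, whose definition essentially uses that classes are represented by $\bar q$-chains with nonzero but $\bar p$-allowable boundaries; this is the main new construction and it has no analogue in your outline. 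Relatedly, your third Lagrangian is misidentified: besides the kernels into $Y_1$ and $Y_2$, the paper's $B$ is the kernel of the connecting map $d:I^{\bar q/\bar p}H_{2n}(Z)\to I^{\bar p}H_{2n-1}(Z)$ of the perversity-change sequence on $Z$ itself, not a ``diagonal'' Mayer--Vietoris kernel.

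Beyond this, the endgame is not ``essentially Wall's lemma plus bookkeeping.'' The actual proof must (i) produce an isometric splitting giving $I^{\bar p\to\bar q}H_{2n}(X)\cong I^{\bar p\onto\bar q}H_{2n}(Y_1,Z)\perp I^{\bar p\onto\bar q}H_{2n}(Y_2,Z)\perp K$, (ii) identify $S^{\perp}$ inside $K$ with classes having $\bar q$-representatives supported in $Z$, (iii) construct a surjection $f:S^{\perp}\to W=\frac{B\cap(C+A)}{B\cap C+B\cap A}$ with kernel $S$ by explicit chain-level arguments using the bicollar, and (iv) verify that $A$, $B$, $C$ are self-annihilating for $\Phi$ and that the intersection form on a complement $L$ of $S$ matches Wall's pairing $\Psi$ up to carefully tracked signs. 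These steps are geometric (stratified general position, product structures in the collar), not formal consequences of long exact sequences, so the proposal would need substantial reworking starting from the correct $V$ and $\Phi$.
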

Here, the term $\sigma(V;A,B,C)$ is a certain Maslov index that generalizes Wall's correction term to Novikov additivity for manifolds with boundary. 
The vector space $V$ is a  ``relative perversity'' intersection homology group $I^{\bar q/\bar p}H_{2n}(Z;\Q)$ equipped with an anti-symmetric linking-type pairing. These are essentially the ``peripheral invariants'' of \cite{CS}, and they will be discussed in more detail in Section \ref{S: pairing}. The subspaces $A, B, C$ are defined as follows:  $A=\ker(i_{Z\subset Y_1}:I^{\bar q/\bar p}H_{2n}(Z;\Q)\to I^{\bar q/\bar p}H_{2n}(Y_1;\Q))$ and $C=\ker(i_{Z\subset Y_2}:I^{\bar q/\bar p}H_{2n}(Z;\Q)\to I^{\bar q/\bar p}H_{2n}(Y_2;\Q))$, both induced by inclusions of subspaces, while $B=\ker(d: I^{\bar q/\bar p}H_{2n}(Z;\Q)\to I^{\bar p}H_{2n-1}(Z;\Q))$, where $d$ is the boundary map of a long exact sequence. 
These details will be explained more fully below. However, we do note one significant corollary:

\begin{corollary}\label{C: Intro Novikov}
With the hypotheses of the preceding theorem, suppose in addition that $I^{\bar p}H_{2n}(Z;\Q)\to I^{\bar q}H_{2n}(Z;\Q)$ is surjective and $I^{\bar p}H_{2n-1}(Z;\Q)\to I^{\bar q}H_{2n-1}(Z;\Q)$ is injective (for example if $I^{\bar p}H_*(Z;\Q)\cong I^{\bar q}H_*(Z;\Q)$). Then $$\sigma_{\bar p\to\bar q}(X)=\sigma_{\bar p\onto\bar q}(Y_1)+\sigma_{\bar p\onto\bar q}(Y_2),$$ as in Novikov's additivity theorem. In particular, Novikov additivity holds if $Z$ is a manifold with trivial stratification.
\end{corollary}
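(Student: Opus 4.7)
The plan is to apply Theorem~\ref{T: Wall}, which gives
$$\sigma_{\bar p\to\bar q}(X) = \sigma_{\bar p\onto\bar q}(Y_1) + \sigma_{\bar p\onto\bar q}(Y_2) + \sigma(V;A,B,C),$$
and to show that, under the added hypotheses, the Maslov-type correction term $\sigma(V;A,B,C)$ vanishes. Since the triple index of any three subspaces of the zero vector space is trivially zero, it will be enough to prove that $V=I^{\bar q/\bar p}H_{2n}(Z;\Q)$ itself vanishes; the particular subspaces $A$, $B$, $C$ will then play no role.

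To get at $V$, I expect to use the long exact sequence relating the $\bar p$-, $\bar q$-, and $\bar q/\bar p$-perversity intersection homology groups of $Z$. This sequence should be available from the construction of $I^{\bar q/\bar p}$ in Section~\ref{S: pairing}: because $\bar p\le \bar q$, the $\bar p$-intersection chain complex includes into the $\bar q$-intersection chain complex, and the groups $I^{\bar q/\bar p}H_*(Z;\Q)$ are the homology of the corresponding quotient (or mapping cone). The resulting long exact sequence takes the form
$$\cdots \to I^{\bar p}H_i(Z;\Q) \to I^{\bar q}H_i(Z;\Q) \to I^{\bar q/\bar p}H_i(Z;\Q) \xrightarrow{d} I^{\bar p}H_{i-1}(Z;\Q) \to I^{\bar q}H_{i-1}(Z;\Q) \to \cdots,$$
with $d$ the same connecting homomorphism that appears in the definition of the subspace $B$ in Theorem~\ref{T: Wall}. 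A routine diagram chase now gives the desired vanishing: the surjectivity hypothesis at degree $2n$ forces the map $I^{\bar q}H_{2n}(Z;\Q)\to I^{\bar q/\bar p}H_{2n}(Z;\Q)$ to be zero, so $d$ is injective on $V$; the injectivity hypothesis at degree $2n-1$ then forces $d$ itself to be zero, whence $V=0$.

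For the parenthetical cases I would note that if $I^{\bar p}H_*(Z;\Q)\cong I^{\bar q}H_*(Z;\Q)$ via the canonical comparison map, then both hypotheses hold in every degree for free; and if $Z$ is a manifold with the trivial stratification, then all perversity intersection homology groups of $Z$ agree with ordinary rational singular homology and the comparison map is an isomorphism, so the hypotheses again hold automatically.

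The only real obstacle is confirming that the long exact sequence used above—and in particular the identification of its connecting homomorphism with the map $d$ named in the definition of $B$—is indeed in place from the earlier development of the relative perversity groups; both of these should be transparent from the chain-level construction of $I^{\bar q/\bar p}$ referenced in Theorem~\ref{T: Wall}, so no serious difficulty is expected.
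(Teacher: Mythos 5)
Your proposal is correct and is essentially the paper's own argument: the paper's proof of this corollary likewise observes that the long exact sequence relating $I^{\bar p}H_*(Z;\Q)$, $I^{\bar q}H_*(Z;\Q)$, and $I^{\bar q/\bar p}H_*(Z;\Q)$ (available from the short exact sequence of chain complexes in Section \ref{S: pairing}) forces $V=I^{\bar q/\bar p}H_{2n}(Z;\Q)=0$ under the stated surjectivity and injectivity hypotheses, so the Maslov term in Theorem \ref{T: Wall} vanishes. Your identification of the connecting map with the $d$ appearing in the definition of $B$, and your handling of the special cases, match the paper's treatment.
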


We will also generalize these results to pseudomanifolds glued along partial  boundaries in Corollary
\ref{C: cor}, and we will show that Wall's theorem for manifolds with boundary follows as a consequence of Theorem \ref{T: wall intro} in Corollary \ref{cor:wall}.

Throughout the paper we will work with PL intersection homology rather than the sheaf-theoretic versions. While the relevant pairings and signatures could be obtained through sheaf-theoretic means, the PL category seems to provide the best context for the most-straightforward adaptation of Wall's arguments from \cite{Wa69}, which were also performed in the PL setting. As a nice side-benefit to this choice, many of our arguments and formulations can be visualized quite geometrically; in particular, the geometric formulation of the  relationship between the relative perversity intersection homology pairing and the intersection pairing on the boundary of a manifold is particularly pleasing, as we shall see in Section \ref{S: manifold}.

\paragraph{Motivation.}

There are several motivations for this work, aside from the general motivation of extending
the signature package to singular spaces.  One of these motivations comes from Sen's 
conjecture and related conjectures arising in string theory.  These are conjectures about
the signatures of certain $4n$-dimensional noncompact manifolds arising as moduli spaces
of particles, such as (n+1)-monopoles in the case of Sen's original conjecture.  In the 4-
dimensional cases, for which the conjecture has been proved, the signature turns out to be
the perverse signature of a compactification of the moduli space as a stratified space \cite{HHM}.
From analytic considerations, it seems likely that this will be true more generally, which
leaves still the question of how to calculate these perverse signatures to resolve the 
conjecture.  Our additivity and non-additivity results give a tool for this.  It would also be
interesting to compare the topological obstruction to additivity for perverse signatures in this paper
to the analytic obstruction to the Mayer-Vietoris techniques for reduced cohomology,
which were also motivated by Sen's conjecture and are related to perverse signatures, 
developed in some of the same settings by Carron in \cite{Car1}, \cite{Car2}, \cite{Car3}.

A second motivation comes from global analysis and PDEs.  For manifolds with 
boundary, the Maslov triple index term in Wall's non-additivity formula 
has been interpreted analytically in the context of analytic signature theorems for manifolds
with corners of codimension two in \cite{HMM} and in terms of a gluing formula
for the $\eta$-invariant and the spectral flow for operators with varying 
boundary conditions in \cite{KL2}.  It seems very likely, therefore that our non-additivity
formula will also turn out to relate to analytic signature theorems for pseudomanifolds
with boundary and signature gluing theorems for pseudomanifolds.  In particular,
although a signature theorem has been proved for manifolds with cusp-bundle
ends in \cite{Va}, and has been interpreted in terms of perverse signatures for
pseudomanifold compactifications of these spaces in \cite{HHM},
there is as yet no analytic signature or signature gluing theorem for manifolds 
with cusp-edge corners.  This is an interesting analytic case to tackle, and having a sense
of what should arise from the topology is helpful in doing this.

A third motivation comes from spectral sequences of perverse sheaves.  In \cite{CD}
and \cite{Hun07}, the difference between various perverse signatures in the case of a 
pseudomanifold with only two strata was interpreted in terms of a signature
on the pages of the Leray spectral sequence of the fibration on the unit normal 
bundle of the singular stratum.  It should be possible to interpret the difference between
perverse signatures for a general pseudomanifold in terms of the pages of 
the hypercohomology spectral sequence for perverse sheaves near the lower
strata.  

Finally, a fourth motivation is a Wall-type non-additivity result for Witt spaces and
possibly also for the new more general signature theory introduced by Banagl
in \cite{BaIH}.
Intersection homology of pseudomanifolds was developed
in the late 1970s and early 1980s, through the work of McCrory \cite{McC}, 
Cheeger \cite{Chee80}, and  Goresky and MacPherson \cite{GM1}.
Intersection homology groups for a pseudomanifold are parametrized by a 
function called a perversity.
There is a subclass of stratified spaces, called Witt spaces, for which there
is a Poincar\'{e} dual `middle perversity' intersection homology, and for
$4n$-dimensional Witt spaces, it is therefore possible to define a `middle
perversity signature'.  Most of the signature package has been generalized
to Witt spaces.  In particular, the Witt cobordism group has been computed and the invariance
of signature under Witt cobordism was proved by Siegel \cite{Si83} in 1983.
In the same paper, he proved a version of Novikov additivity for Witt spaces
where the dividing hypersurface is again Witt.  In a very recent paper, \cite{ALMP},
progress has also been made on the analytic side of the signature
package for Witt spaces.  In particular, the authors prove that the topological
middle perversity signature for Witt spaces is the signature of the unique extension
of the signature operator for the spaces endowed with iterated cone metrics.
The signature on Witt spaces is a particular case of a perverse signature, so
our theorem generalizes Siegel's additivity theorem to a Wall-type nonadditivity
theorem for these spaces.

Banagl has extended signature theory further to a class of ``non-Witt'' spaces (despite the terminology, this class of spaces includes all Witt spaces); these spaces are defined in terms of certain  signature conditions on the neighborhoods of odd codimensional strata. If a non-Witt space is actually Witt, Banagl's signature agrees with the Witt space signature. 
It seems possible that Banagl's signature may in fact always be a perverse
signature.  Our (non-)additivity results may help determine if this is true, and, 
if so, gives an additivity and non-additivity result for Banagl's signatures. Levikov \cite{Lev07, Lev10} has proven a Novikov additivity theorem for Banagl's signatures in a certain special case involving a union along a manifold; this is consistent with our hypothesis via Corollary \ref{C: Intro Novikov}.

\paragraph{Outline.} In order to generalize Wall's theorem to perverse signatures, we first need
to review past results and make some new definitions.  In the next section, we review signatures for manifolds, and in the following section we review intersection homology and make some new constructions.  In  Section \ref{S: Wall},
we prove our non-additivity result, obtaining as a corollary our additivity
theorem.  We prove it first for stratified pseudomanifolds without boundary,
then generalize to those with boundary.
In Section \ref{S: manifold}, we discuss the relationships of our work to  Wall's original theorem and give
two examples of calculations.
Finally, in an appendix we carefully establish some conventions regarding orientation
and intersection numbers that we use in the paper.

\medskip

\paragraph{Acknowledgment.} The authors would like to thank Markus Banagl for several helpful discussions, as well as the anonymous referees of a previous version of the paper.

\section{Background on signatures and (non-)additivity}

In this section and the following, we recall known results concerning signatures and provide a crash course on the relevant version of intersection homology.

\subsection{Additivity and non-additivity}

Recall that the signature of a closed connected oriented $4n$-manifold is the signature $\sigma(M)$ of the nondegenerate symmetric intersection pairing $$\pfb:H_{2n}(M;\Q)\otimes H_{2n}(M;\Q)\to \Q,$$ i.e. $\sigma(M)$ is the dimension of the largest positive definite subspace of this pairing minus the dimension of the largest negative definite subspace. Alternatively, this is the same as the signature of cup product pairing $H^{2n}(M;\Q)\otimes H^{2n}(M;\Q)\to H^{4n}(M;\Q)\cong \Q$ or the signature of the pairing given by exterior product of forms in de Rham cohomology $H^{2n}(M;\R)\otimes H^{2n}(M;\R)\to H^{4n}(M;\R)\cong \R$.

If $N$ is a manifold with boundary, we instead have a nondegenerate intersection pairing $$\pfb:H_{2n}(N;\Q)\otimes H_{2n}(N,\bd N;\Q) \rightarrow \Q.$$  This descends to a nondegenerate
symmetric pairing on $$\pfc:\im( H_{2n}(N;\Q) \rightarrow H_{2n}(N,\bd N;\Q)),$$ where the arrow
is induced by inclusion.  The signature of this pairing 
 is the signature $\sigma(N)$.

Suppose now that $M$ is a closed, oriented $4n$-manifold, and that $M = M_1 \cup_Z M_2$, where $M_1,M_2$ are manifolds-with-boundary oriented compatibly with $M$ and $Z=\bd M_1=-\bd M_2$.    The Novikov additivity theorem for the signature of compact $4n$-manifolds is:

\begin{theorem}[Novikov]
\label{th:novikov}
\[
\sigma(M) = \sigma(M_1) + \sigma(M_2).
\]
\end{theorem}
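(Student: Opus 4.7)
The plan is to exhibit a subspace $W \subseteq H_{2n}(M;\Q)$ on which $\pfb_M$ splits as an orthogonal sum of the (possibly degenerate) forms giving $\sigma(M_1)$ and $\sigma(M_2)$, and then to show that $W$ is coisotropic in $H_{2n}(M;\Q)$ so that $\sigma(M)$ equals the signature of $\pfb_M|_W$. Throughout, the bicollar of $Z$ in $M$ will let me represent classes by cycles sitting in the interior of one side.

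First I set $W_i=\im\bigl(H_{2n}(M_i;\Q)\to H_{2n}(M;\Q)\bigr)$ and $W=W_1+W_2$. Two facts then follow from transverse push-offs: (a) for $\alpha\in W_1$ and $\beta\in W_2$, the representing cycles can be displaced into disjoint halves of a bicollar of $Z$, giving $\alpha\pfb_M\beta=0$; (b) for $\alpha,\alpha'\in W_i$, the pairing $\alpha\pfb_M\alpha'$ is computed by making one cycle transverse to $\bd M_i$ and intersecting inside $M_i$, and this agrees with the pairing $\pfc_{M_i}$ on the image of $H_{2n}(M_i;\Q)$ in $H_{2n}(M_i,\bd M_i;\Q)$. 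Together these give $\sigma(\pfb_M|_W)=\sigma(M_1)+\sigma(M_2)$.

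The main obstacle is showing $W^\perp\subseteq W$. My plan is to use excision and the long exact sequence of the pair $(M,M_j)$ to produce a natural map $\rho_i\colon H_{2n}(M;\Q)\to H_{2n}(M_i,\bd M_i;\Q)$ with kernel $W_j$ (for $j\ne i$), and then to verify the naturality identity
\[ (v,(j_i)_*\alpha)_M \;=\; \rho_i(v)\,\pfb_{M_i}\,\alpha \qquad \text{for all } \alpha\in H_{2n}(M_i;\Q). \]
This again reduces to transversality, but careful bookkeeping of Poincar\'{e}--Lefschetz duality for the $M_i$ is what makes it the trickiest step. Granting the identity, nondegeneracy of the Lefschetz pairing on $M_i$ forces $v\in W_i^\perp \iff \rho_i(v)=0 \iff v\in W_j$, and hence $W^\perp = W_1\cap W_2\subseteq W$.

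Finally, once $W^\perp\subseteq W$, the subspace $W^\perp$ is automatically isotropic for $\pfb_M$ and equals the radical of $\pfb_M|_W$. The standard fact that a coisotropic subspace of a nondegenerate symmetric form captures the full signature (the orthogonal complement being hyperbolic) then yields $\sigma(M)=\sigma(\pfb_M|_W)=\sigma(M_1)+\sigma(M_2)$.
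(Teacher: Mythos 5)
Your proof is correct, and there is no gap to report: the naturality identity $v\pfb_M (j_i)_*\alpha=\rho_i(v)\pfb_{M_i}\alpha$ and the fact that a coisotropic subspace of a nondegenerate symmetric form carries the full signature are both standard, and your sketches of them are sound. Note, however, that the paper never proves Theorem \ref{th:novikov} directly -- it is quoted as classical background -- and its own route to (generalized) Novikov additivity is through Theorem \ref{T: Wall} and Corollary \ref{C: novikov}, where additivity is the special case in which the Maslov correction $\sigma(V;A,B,C)$ vanishes. The first half of your argument runs parallel to the opening of the paper's proof of Theorem \ref{T: Wall}: split off the images $W_i$ of $H_{2n}(M_i;\Q)$, observe that they are mutually orthogonal because representatives can be pushed into disjoint interiors, and identify the restricted (possibly degenerate) forms with the Lefschetz forms computing $\sigma(M_i)$. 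The routes diverge in how the rest of $H_{2n}(M;\Q)$ is handled: the paper must retain an orthogonal complement $K$, relate it to $I^{\bar q/\bar p}H_{2n}(Z;\Q)$, and recognize its signature as a Maslov triple index, because in the bounded/pseudomanifold setting the analogue of your $W$ need not be coisotropic; you exploit exactly the simplification available for a closed $M$, proving $W_i^{\perp}=W_j$ from excision, the long exact sequence of $(M,M_j)$, and nondegeneracy of Lefschetz duality, so that $W^{\perp}=W_1\cap W_2\subseteq W$ and isotropic reduction finishes the proof. The trade-off is that your argument is shorter and self-contained but specific to the closed case, while the paper's heavier machinery also produces the correction term when additivity fails.
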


Since signature theory of compact manifolds is nontrivial (i.e. there exist manifolds with non-zero signature), the theory of signatures of manifolds
with boundary must also, by Novikov additivity, be nontrivial. It also turns out to be more subtle.
  The Atiyah-Patodi-Singer index theorem, \cite{APS}, 
showed that the signature of a manifold with boundary may be realized as the index
of the signature operator with a certain global boundary condition \cite{BBW}, but
that it differs from the $L$-genus of the manifold by a spectral invariant of the boundary
called the $\eta$-invariant.  It is also clear that signature for manifolds with boundary 
cannot satisfy a general Novikov additivity, as any manifold may be broken up into 
pieces that are homeomorphic to a disk, which has trivial signature.  In \cite{Wa69},
Wall identified the defect in additivity for signatures of manifolds with boundary in terms
of the Maslov triple index:

\begin{figure}[!htp]
\begin{center}
\scalebox{.4}{\includegraphics{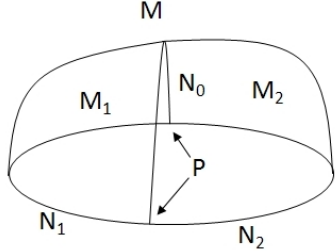}}
\caption{A schematic of the hypothesis of Wall's theorem: The manifold-with-boundary $M$ is split into the pieces $M_1$ and $M_2$ along the hypersurface $N_0$. The boundary of $M$ is split into $N_1$ and $N_2$ along $P$.}\label{F: fig4}
\end{center}
\end{figure}

\begin{theorem}[Wall \cite{Wa69}]
\label{wall}
Suppose 
$M^{4n}$ is a compact oriented manifold with boundary such that $M=M_1\cup M_2$, where $M_1,M_2$ are compact oriented manifolds with boundary. Let $N_1=\bd M\cap M_1$ and $N_2=\bd M\cap M_2$. Suppose $M_1\cap M_2=\bd M_1\cap\bd M_2$ is a  manifold $N_0$ with boundary  such that $\bd M_1=N_0\cup -N_1$, $\bd M_2=N_2\cup -N_0$, and $\bd N_1=\bd N_2=\bd N_0=P$ (see Figure \ref{F: fig4}). Then 
\begin{equation}
\label{eq:wall}
\sigma(M) = \sigma(M_1) + \sigma(M_2) - \sigma(V;A,B,C),
\end{equation}
where $\sigma(V;A,B,C)$ is the Maslov triple index for the symplectic
vector space (under the intersection pairing) $V=H_{2n-1}(P;\Q)$ with respect to the three Lagrangian
subspaces $A = \Ker(H_{2n-1}(P;\Q) \rightarrow H_{2n-1}(N_1;\Q))$, 
$B =  \Ker(H_{2n-1}(P;\Q) \rightarrow H_{2n-1}(N_0;\Q))$ 
and $C =  \Ker(H_{2n-1}(P;\Q) \rightarrow H_{2n-1}(N_2;\Q))$.
\end{theorem}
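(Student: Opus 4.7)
The plan is to adapt Wall's original geometric strategy: represent classes in $H_{2n}(M,\partial M)$ by $2n$-chains that split along $N_0$, track the failure of these splittings to match up along $P=\partial N_0$, and identify the resulting defect in the intersection pairing with the Maslov triple index.

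First I would recall that each of $\sigma(M)$, $\sigma(M_1)$, $\sigma(M_2)$ is the signature of the symmetric form $\pfc$ on the image of $H_{2n}(-)\to H_{2n}(-,\partial-)$. Given a relative $2n$-cycle in $M$, a transversality argument lets me perturb it to meet $N_0$ transversely and cut, producing a pair $(c_1,c_2)\in C_{2n}(M_1)\oplus C_{2n}(M_2)$ with $\partial c_1 = -\partial c_2$ a chain in $N_0$ whose further boundary lies in $P$. For intersection numbers, transversally chosen representatives meet only in the interiors of $M_1$ and $M_2$, so cross-terms between classes supported in opposite pieces vanish, and the self-pairings reproduce $\sigma(M_1)$ and $\sigma(M_2)$. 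This yields a preliminary decomposition $\sigma(M)=\sigma(M_1)+\sigma(M_2)+(\text{defect})$, where the defect captures contributions from classes whose chain-level splittings fail to match cleanly along $N_0$ and $P$.

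Next I would use the relative Mayer--Vietoris sequence for the triad $(M;M_1,M_2)$ together with the long exact sequences of the pairs $(N_0,P)$ and $(N_i,P)$ to locate the defect. Tracing the connecting homomorphisms shows that the obstruction to lifting a compatible pair $(c_1,c_2)$ to an absolute cycle in $M$ is measured by a class in $V=H_{2n-1}(P;\Q)$, and that $A$, $B$, $C$ are Lagrangian subspaces of $V$: isotropy follows from Poincar\'{e}--Lefschetz duality applied to each of $N_1$, $N_0$, $N_2$ with common boundary $P$, while the dimension count is the standard half-lives-half-dies principle. A direct chain-level computation of $\pfc$ on $M$ in terms of split representatives $(c_1,c_2)$ and $(c_1',c_2')$ produces an additional contribution coming from the linking of boundary chains inside $N_0$, $N_1$, $N_2$; under the Mayer--Vietoris identifications this bilinear form lives on the standard Kashiwara subquotient of $V$ built from $A$, $B$, $C$, and its signature is by definition $\sigma(V;A,B,C)$.

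The main obstacle will be the final identification of the defect form with the Maslov triple index, with the correct sign. This requires carefully reconciling orientation conventions on $N_0$, $N_1$, $N_2$, $P$; verifying rigorously that $A$, $B$, $C$ are Lagrangian; and matching the concrete bilinear form arising from the chain-level splitting with one of the equivalent algebraic definitions of $\sigma(V;A,B,C)$. A subsidiary issue is independence of the construction from the choice of chain representative, which should follow from standard invariance of intersection numbers under chain homotopy but must be verified carefully at the boundary where the splittings were performed.
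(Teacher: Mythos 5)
Your proposal follows Wall's original manifold-level argument, and that is a genuinely different route from the one this paper takes for Theorem \ref{wall}: the paper cites Wall for the statement and then, in Section \ref{S: manifold}, re-derives it (Corollary \ref{cor:wall}) as a special case of the intersection-homology non-additivity theorem, Theorem \ref{T: Wall}. There one restratifies $M$ as $M\supset \bd M$, takes $\bar p(\bd M)$ very negative and $\bar q=\bar t-\bar p$, so that $I^{\bar p}H_*\cong H_*(M;\Q)$ and $I^{\bar q}H_*\cong H_*(M,\bd M;\Q)$, and then uses Lemma \ref{L: manifold} and Proposition \ref{P: manifold} to identify $I^{\bar q/\bar p}H_*(N_0;\Q)\cong H_{*-1}(P;\Q)$ with the peripheral pairing $\Phi$ equal to the \emph{negative} of the intersection pairing on $P$; Wall's formula, including the minus sign in \eqref{eq:wall}, then drops out of the general theorem. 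Your direct approach is self-contained in ordinary homology and is in fact the template that the proof of Theorem \ref{T: Wall} generalizes; the paper's route buys Wall's theorem for free once the harder general statement is available and makes visible the role of the relative-perversity (peripheral) pairing.

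Be aware, though, that your sketch defers exactly the step that carries all the content. The preliminary splitting $\sigma(M)=\sigma(M_1)+\sigma(M_2)+(\text{defect})$ is not merely ``cross-terms vanish'': one must check that the images of $H_{2n}(M_i;\Q)$ pair isometrically with the forms computing $\sigma(M_i)$, pass to the orthogonal complement $K$, split off a piece with a self-annihilating subspace so that $\sigma(K)=\sigma(L)$, and then construct an explicit isomorphism from $L$ to $W=\frac{B\cap(C+A)}{B\cap C+B\cap A}$ carrying the intersection form to Wall's form $\Psi$ --- this is where the proof of Theorem \ref{T: Wall} spends most of its effort (the subspaces $S\subset S^{\perp}\subset K$, the map $f:S^{\perp}\to W$, Proposition \ref{P: L=W}). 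The Lagrangian property of $A,B,C$ and the final sign also require careful orientation bookkeeping of the kind discussed in Section \ref{S: Wall1} and the Appendix (Wall's own published signs need the clarification explained there). Until the identification $L\cong W$ and the sign check are actually carried out, what you have is an accurate plan rather than a proof.
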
 

We recall the definition of $\sigma(V;A,B,C)$ in the next subsection.

\subsection{Wall's Maslov index and some care with signs}\label{S: Wall1}

Here, we briefly review the algebraic version of the Maslov index presented by Wall in \cite{Wa69}. We also make some observations regarding a couple of sign issues that are not completely clear in Wall's original paper. For an expository account containing many viewpoints on the Maslov index, we refer the reader to \cite{CLM}.

Suppose $V$ is a vector space over $\Q$, $\Phi:V\times V\to \Q$ is a bilinear map, and  $A,B,C\subset V$ are such that $\Phi(A\times A)=\Phi(B\times B)=\Phi(C\times C)=0$. Wall considers the space $W=\frac{A\cap(B+C)}{A\cap B+A\cap C}$ (which is isomorphic to the spaces formed by permuting $A$, $B$, and $C$). Given $a,a'\in A$ representing elements of $W$, then $a=-b-c$ and $a'=-b'-c'$ for some $b,b'\in B$ and $c,c'\in C$. It is easy to show using these relations that we must have \begin{equation}\label{E: Phi}
\Phi(b,a')=-\Phi(c,a')=\Phi(c,b')=-\Phi(a,b')=\Phi(a,c')=-\Phi(b,c').
\end{equation}
From here, one obtains a well-defined pairing $\Psi$ on $W$ by setting $\Psi(a,a')=\Phi(a,b')$. This pairing is unaltered by even permutation of $A,B,C$ and is altered by a sign for odd permutations. If $\Phi$ is skew-symmetric, $\Psi$ is symmetric, and its signature is denote $\sigma(V;A,B,C)$. 
In the statement of Wall's theorem above, $V$ is the vector space $H_{2n-1}(P;\Q)$ with its intersection pairing, and $A,B,C$ are the kernels of the various maps induced by including $P$ in $N_1$, $N_0$, and $N_2$.

In Wall's ensuing topological arguments, there are some sign issues with which one needs to take care. In the proof of his non-additivity theorem, Wall  instead uses the formulation $W=\frac{B\cap(C+A)}{B\cap C+B\cap A}$ (for appropriate choices of $A,B,C$). This cyclic permutation should not affect signs. However, Wall ultimately encounters an intersection pairing $(\bd \eta)\pfa(\bd  \xi')$ representing $\Phi(\bd \eta,\bd \xi')$, where $\bd \eta\in B$ and $\bd \xi'\in C$.  Taking $B,C,A$ in that order, this is then a pairing between an element from the first subspace and an element from the second subspace. By definition, this is $\Psi$ (whereas Wall states that  this intersection pairing represents $-\Psi$). However, the intersection   $(\bd \eta)\pfa(\bd  \xi')$ is not itself quite correct. This intersection pairing is in Wall's space $Z$ (our $P$), which is the boundary of a space $X_0$ (our $N_0$), which is itself \emph{the negative of} part of the boundary of $Y_+$ (our $M_2$). By ``negative of'', we mean with the reversed orientation. Wall at first encounters the intersection $\eta\pfa_{Y_+}\xi'$ and states this is equal to $\eta\pfa_{X_0} \bd \xi'$ in $X_0$. However,  with the conventions we establish below in the Appendix, since the degree of $\eta$ is even, $\eta\pfa_{Y_+}\xi'$ will be the negative of the intersection $\eta\pfa_{X_0} \bd \xi'$ because $X_0$ has its orientation reversed as it appears in the boundary of $Y_+$. Then from here, we do have that $\eta\pfa_{X_0} \bd \xi'=\bd \eta\pfa_{Z} \bd \xi'$. 
 Putting these sign issues together, it is correct that Wall arrives at the pairing $-\Psi$, and the statement of Wall's non-additivity theorem is  correct in \cite{Wa69}.

\section{Background and preliminaries on intersection homology}

In this section, we review intersection homology and make the necessary definitions to allow us to state our generalized non-additivity theorem.
We begin with a basic review of pseudomanifolds and intersection homology; the experts might want to skim this section, as we use some recent generalizations with which they might not be familiar. Then in the second subsection below we define perverse signatures.    We also
will need some symplectic vector space that plays the role of $H_{2n-1}(P;\Q)$
from Wall's theorem.  We define this space in the third subsection.

\subsection{Review of intersection homology}\label{S: Review}

We begin with a brief review of basic definitions. For further reference, we refer the reader to \cite{GBF26, GBF23} as the background resources most suited to the brand of intersection homology treated here:  intersection homology with general perversities and stratified coefficient systems.  Other standard sources for more classical versions of intersection homology include  \cite{GM1, GM2, Bo, KirWoo, BaIH, Ki, GBF10}. Although we will not pursue them in detail here, various analytic approaches to intersection homology can be found in, e.g \cite{Chee80, CGM82, Bry, BHS}; 
these are particularly useful for
relating intersection homology to $L^2$-cohomology and harmonic forms, as in \cite{Chee80}, \cite{HHM}, \cite{SS},
and others, and for relating perverse signatures
to $L^2$-signatures, as in \cite{Dai}, \cite{Hun07}.

\paragraph{Stratified pseudomanifolds.}

We use the definition of stratified pseudomanifold in \cite{GM2}, except that
we allow strata of codimension one. 
Before recalling  the definition we need some background.

For a space $W$, we define the {\it open cone} $c(W)$ by $c(W)=([0,1)\times 
W)/(0\times W)$ (we put the $[0,1)$ factor first so that our signs will 
be consistent with the usual definition of the algebraic mapping cone). Note 
that $c(\emptyset)$ is a point.

If $Y$ is a filtered space
$$Y=Y^n\supseteq Y^{n-1}\supseteq \cdots \supseteq 
Y^0\supseteq Y^{-1}=\emptyset,$$ 
we let $c(Y)$ to be the filtered space with
$(c(Y))^i=c(Y^{i-1})$ for $i\geq 0$ and $(c(Y))^{-1}=\emptyset$.

The definition of stratified pseudomanifold is now given by induction on the
dimension. 

\begin{definition}\label{D: pseudomanifold}
A $0$-dimensional stratified pseudomanifold $X$ is a  discrete set of points 
with the trivial filtration $X=X^0\supseteq X^{-1}=\emptyset$.

An $n$-dimensional \emph{(topological) stratified  pseudomanifold}
$X$ is a paracompact Hausdorff space together 
with a filtration by closed subsets

\begin{equation*}
X=X^n\supseteq X^{n-1} \supseteq X^{n-2}\supseteq \cdots \supseteq X^0\supseteq X^{-1}=\emptyset
\end{equation*}
such that
\begin{enumerate}
\item $X-X^{n-1}$ is dense in $X$, and
\item for each point $x\in X^i-X^{i-1}$, there exists a neighborhood
$U$ of $x$ for which there is a  \emph{compact} $n-i-1$ dimensional 
stratified    pseudomanifold  $L$ and a   homeomorphism
\begin{equation*}
\phi: \R^i\times cL\to U
\end{equation*}
that takes $\R^i\times c(L^{j-1})$ onto $X^{i+j}\cap U$. A neighborhood $U$ with
this property is called {\it distinguished} and $L$ is called a {\it link} of
$x$.
\end{enumerate}
\end{definition}

The $X^i$ are called \emph{skeleta}. We write $X_i$ for $X^i-X^{i-1}$; this 
is an $i$-manifold that may be empty. We refer to the connected components of 
the various $X_i$ as  \emph{strata}\footnote{This terminology agrees with some 
sources, but is slightly different from others, including our own past work, 
which would refer to $X_i$ as the stratum and what we call strata as 
``stratum components.''}. If a stratum $Z$ is a subset of $X_n$ it 
is called a \emph{regular stratum}; otherwise it is called a \emph{singular 
stratum}.  The \emph{depth} of a stratified pseudomanifold is the number of 
distinct skeleta it possesses minus one.

We note that this definition of stratified pseudomanifolds is slightly more general than the one in common usage \cite{GM1}, as it is common to assume that $X^{n-1}=X^{n-2}$. We will not make that assumption here, but when we do assume $X^{n-1}=X^{n-2}$, intersection homology with Goresky-MacPherson perversities is known to be a topological invariant; in particular, it is invariant under choice of  stratification (see \cite{GM2}, \cite{Bo}, \cite{Ki}). Examples of pseudomanifolds include irreducible complex algebraic and analytic varieties (see \cite[Section IV]{Bo}).

\subparagraph{Pseudomanifolds with boundary.}

In manifold theory, one considers not just manifolds, which are initially defined so that every point has a Euclidean neighborhood, but also $\bd$-manifolds, for which points might have neighborhoods homeomorphic to Euclidean half-spaces. This is the familiar notion of ``manifolds with boundary''.  Even if one's ultimate intent is to study closed manifolds (those with empty boundary), boundaries naturally arise if one attempts to cut a manifold into smaller pieces.

In this section, we provide the definition of $\bd$-stratified pseudomanifold developed in \cite{GBF25}. The notion of ``pseudomanifold with boundary'' in the context of intersection homology goes back at least to Siegel's thesis \cite{Si83}, though it is difficult to find technical formulations in the literature.

\begin{definition}\label{def boundary}
An $n$-dimensional
\emph{$\bd$-stratified pseudomanifold} (or ``$\bd$-pseudomanifold'' if we do not wish to emphasize the stratification)  is a pair $(X,B)$ together with a
filtration on $X$ such that
\begin{enumerate}
\item $X-B$, with the induced filtration, is an $n$-dimensional stratified
pseudomanifold (in the sense of Definition \ref{D: pseudomanifold}),
\item $B$, with the induced filtration, is an $n-1$ dimensional stratified
pseudomanifold (in the sense of Definition \ref{D: pseudomanifold}),
\item\label{I: collar} $B$ has an {\it open collar neighborhood} in $X$, that is, a
neighborhood $N$ with a homeomomorphism of filtered spaces $N\to
B\times [0,1)$ (where $[0,1)$ is given the trivial filtration) that takes
$B$ to $B\times \{0\}$.
\end{enumerate}
$B$ is called the 
\emph{boundary} of $X$ and denoted $\bd X$.  
\end{definition}

We will often abuse notation by referring to the ``$\bd$-stratified 
pseudomanifold $X$,'' leaving $B$ tacit. 

Note that a stratified pseudomanifold $X$ (as defined in Definition \ref{D: pseudomanifold}) is a $\bd$-stratified pseudomanifold with $\bd X=\emptyset$.  
As in classical manifold theory, if we wish to emphasize the point that a $\bd$-stratified pseudomanifold $X$ is compact with $\bd X=\emptyset$, we will refer to such a $\bd$-stratified pseudomanifold as \emph{s-closed}, where the ``s'' is meant to indicate the dependence of this property on the stratification; see below for examples. 

\begin{definition}
The {\it strata} of a $\partial$-stratified pseudomanifold $X$ are the 
components of the spaces $X^i-X^{i-1}$.
\end{definition}

It is shown in \cite{GBF25} that when there are no codimension one strata, the boundary
$\partial X$ is a topological invariant. However, this is not true if codimension one strata are 
allowed, as shown by the following example.

\begin{example1}\label{E: example}
Let $M$ be a compact  $n$-manifold with boundary (in the classical sense), and let $P$ be its manifold boundary.
\begin{enumerate}
\item
Suppose we filter $M$ trivially so that $M$ itself is the only non-empty stratum. Then 
$(M,P)$ is a $\bd$-stratified pseudomanifold. Note that all the conditions of Definition \ref{def boundary} are fulfilled: $M-P$ is an $n$-manifold, $P$ is an $n-1$ manifold, and $P$ is collared in $M$ by classical manifold theory (see \cite[Proposition 3.42]{Ha}). So in this case, the notion of boundary for a $\bd$-stratified pseudomanifold and for an unfiltered $\bd$-manifold agree.

\item
On the other hand, suppose $X$ is the filtered space  $M\supset P$. Then it is easy to check that $(X,\emptyset)$ is a $\bd$-stratified pseudomanifold; that is, $X$ is a stratified 
pseudomanifold in the sense of Definition \ref{D: pseudomanifold}. 
With this filtration, we cannot have $\bd X=P$ because condition \eqref{I: collar} of Definition \ref{def boundary} would not be satisfied. Thus with this stratification, $X$ is s-closed.
\end{enumerate}
\end{example1}

Throughout this paper, the word ``boundary'' and the symbol $\bd X$ will always refer to the pseudomanifold boundary defined here and the compatible manifold scenario from the first part of the example. In contexts where we discuss a classical $\bd$-manifold $M$ with non-trivial stratification but still wish to consider the boundary of the trivially stratified $M$, we will emphasize this by referring to the \emph{manifold boundary} of $M$.

\paragraph{Piecewise linear pseudomanifolds.}

A  \emph{piecewise linear} (or \emph{PL}) stratified pseudomanifold or $\bd$-stratified pseudomanifold is a stratified pseudomanifold or $\bd$-stratified pseudomanifold with a PL structure compatible with the filtration, meaning that each skeleton is a PL subspace, and such that each link is a compact PL stratified pseudomanifold and the distinguished neighborhood homeomorphisms $U\cong \R^{n-k}\times cL$ or $U\cong \R^{n-k}_+\times cL$ are PL homeomorphisms. In this paper, we will restrict ourselves entirely to the PL setting. This is sufficient for the purpose of analysts or algebraic geometers wishing to consider Thom-Mather or Whitney stratified spaces, which are  $\bd$-pseudomanifolds. Our results should also hold for the class of topological $\bd$-stratified pseudomanifolds, but we wish to avoid the technical details we would need to pursue, such as topological general position or, alternatively, some extremely careful sheaf theory.

\paragraph{Intersection homology.}
We will work mostly with PL chain intersection homology theory with general perversities and stratified coefficient systems. General perversities (those not necessarily satisfying the axioms of Goresky and MacPherson \cite{GM1}) are indispensable for certain results, such as the intersection homology  K\"unneth theorem of \cite{GBF20}. Similarly, stratified coefficients are necessary in order to properly formulate the most useful version of intersection homology with general perversities. 
More detailed overviews of this version of the theory can be found in \cite{GBF26, GBF23}.

\subparagraph{General perversities.}
A  \emph{general perversity} on a $\bd$-stratified pseudomanifold $X$ is any function $\bar p: \{\text{singular strata of $X$} \}\to \Z$. It is technically convenient also to define $\bar p(Z)=0$ if $Z$ is a regular stratum of $X$.

\subparagraph{Stratified coefficient systems.}
In order to formulate the chain version of intersection homology for general perversities that seems best to fit with the classical sheaf-theoretic versions of intersection homology, we must use ``stratified coefficients,'' as introduced in \cite{GBF10} (see \cite{GBF26} for an exposition and also \cite{GBF23}). Since the situation simplifies somewhat in the PL category (and since we will not be working with local coefficient systems), we present a simpler definition here than is found elsewhere. In previous papers, the relevant chain complexes would have been denote $I^{\bar p}C_*(X;G_0)$, but we here follow the practice of \cite{GBF25} and write simply $I^{\bar p}C_*(X;G)$. However, these should not be confused with the intersection chain complexes of King \cite{Ki}.

First, recall that the PL chain complex $C_*(X;G)$ of a PL space $X$ is defined to be $\dlim_{T\in \mc T}C^T_*(X;G)$, where each $T$ is a triangulation of $X$ compatible with the PL structure and $C^T_*(X;G)$ is the corresponding simplicial chain complex with coefficients in the
abelian group $G$. The limit is taken over all triangulations compatible with the PL structure\footnote{It is technically necessary to work with such chains in discussing intersection homology, since degenerate cases can occur if a triangulation is not sufficiently fine. However, it is also possible to work with any sufficiently fine fixed triangulation; see \cite{MV86}.}. In other words, elements of $C_*(X;G)$ are represented by sums of chains, each of which is taken from some fixed triangulation of $X$. In particular, any $\xi\in C_j(X;G)$ can be represented as a finite sum $\xi=\sum g_i\sigma_i$, where $g_i\in G$ and $\sigma_i$ is a $j$-simplex in some triangulation of $X$. Furthermore, $\bd \xi=\sum g_i\bd \sigma_i$.

Now, suppose $X$ is a $\bd$-stratified pseudomanifold. We define $C_j(X;G)_0$ to be the subgroup of $\xi\in C_j(X;G)$ such that when we write $\xi$ as $\sum g_i\sigma_i$, no $\sigma_i$ is contained in $X^{n-1}$. It is easy to check that this is a $G$-module. In order to define $C_*(X;G)_0$ as a chain complex, we define $\bd \xi$ to be $\sum g_i\bd \sigma_i- \sum_{\sigma_i\subset X^{n-1}} g_i\sigma_i$. In other words, to obtain $\bd \xi\in C_*(X;G)_0$, we remove from $\bd \xi\in C_*(X;G)$ those simplices contained in $X^{n-1}$. This is a chain complex, and we denote its homology $H_*(X;G)$. Some readers will notice that $C_*(X;G)_0$ is isomorphic to $C_*(X,X^{n-1};G)$, but for the upcoming definition of intersection homology, we require this formulation. We refer to $H_*(X;G)$ as homology with \emph{stratified coefficients} $G$. 

\subparagraph{Intersection homology.}

Given a $\bd$-stratified pseudomanifold $X=X^n$, a general perversity $\bar p$, and an abelian group $G$, one defines the \emph{intersection chain complex} $I^{\bar p}C_*(X;G)$ as a subcomplex of $C_*(X;G)_0$ as follows: An $i$-simplex $\sigma$ in $X$  is \emph{$\bar p$-allowable} if 
$$\dim(\sigma\cap Z) \leq i-\codim(Z)+\bar p(Z)$$ 
for any singular stratum  $Z$ of $X$. 
The chain $\xi\in C_i(X;G)_0$ is $\bar p$-allowable if each simplex with non-zero coefficient in $\xi$ or in $\bd \xi$ is allowable. Notice that  simplices that disappear from the boundary because of the coefficient system $G$ do not need to be checked for allowability. Notice that this is also why it is not sufficient to work in $C_*(X,X^{n-1};G)$, where we have no control over simplices that live in $X^{n-1}$. 
Let $I^{\bar p}C_*(X;G)$ be the complex of $\bar p$-allowable chains.  The associated homology theory is denoted $I^{\bar p}H_*(X;\mc G)$.

Relative intersection homology is defined similarly, in the obvious way, though we note that 
 the filtration on a subspace will always be that inherited from the larger space by restriction, i.e. if $Y\subset X$, then $Y^k=Y\cap X^k$, regardless of the actual dimensions involved. We also assume that $Y$ inherits the formal dimension of $X$, regardless of actual geometric dimension, so that if $Z$ is a stratum of codimension $k$ in $X$, then we consider $Z\cap Y$ to have the same codimension $k$ in $Y$.
  Thus a chain  in $Y$ is defined to be allowable if and only if it is allowable in $X$. 
 
If $\bar p$ is a perversity in the sense of Goresky-MacPherson \cite{GM1} and $X$ has no strata of codimension one, then $I^{\bar p}H_*(X;G)$ is isomorphic to the intersection homology groups of Goresky-MacPherson \cite{GM1, GM2}. If $\bar p$ is not a Goresky-MacPherson perversity, then we need stratified coefficients in order for some of the main properties of intersection homology, such as duality and the cone formula, to hold - see \cite{GBF23, GBF26}. General perversities are useful because, among other things, they allow us
to talk about relative and absolute cohomologies in the same framework as the Goresky-MacPherson
intersection homologies.  Suppose that $Z \subset X$ are smooth manifolds. Then  if $\bar p(Z)>\codim(Z)-2$ we get
 $I^{\bar p}H_*(X;G)\cong H_*(X,Z;G)$, and if $\bar p(Z)<0$, 
we get  $I^{\bar p}H_*(X;G)\cong H_*(X-Z;G)$. Note that if $Z$ is the manifold boundary of $X$, then also $H_*(X-Z;G)\cong H_*(X;G)$.

Intersection homology with general perversities can also be formulated sheaf theoretically \cite{GBF23, GBF26}  or analytically \cite{Sa05}. In these languages, it is more customary to use cohomological indexing and refer to intersection \emph{cohomology} but these are really the same theories (up to various indexing issues).

Even with general perversities and $G$ coefficients, the basic properties of $I^{\bar p}H_*(X;G)$ established in  \cite{Ki} and \cite{GBF10} hold with little or no change to the proofs, such as stratum-preserving homotopy equivalence, excision, the K\"unneth theorem for which one term is an unstratified  manifold, Mayer-Vietoris sequences, etc. For more details of this construction (and more general cases), see \cite{GBF26, GBF10,GBF23}.

\paragraph{Intersections and duality.} Finally, we recall the intersection homology version of Poincar\'e duality, due initially to Goresky and MacPherson \cite{GM1} and later extended to the more general cases considered here \cite{GBF23}. Suppose $X$ is an s-closed oriented $n$-pseudomanifold and that $F$ is a field. Suppose that $\bar p$ and $\bar q$ are perversities such that $\bar p+\bar q\leq \bar  r$, i.e. $\bar p(Z)+\bar q(Z)\leq \bar r(Z)$ for all singular strata $Z$. Then there is a partially defined intersection pairing $$\pfa: I^{\bar p}C_i(X;F)\times I^{\bar q}C_j(X;F)\to I^{\bar r}C_{i+j-n}(X;F),$$ defined on pairs of chains $x\times y$ such that $x$ and $y$ are in stratified general position (see \cite{GM1,GBF23}). Since all pairs of homology classes can be represented by pairs of chains in stratified general position, it can be shown that the intersection pairing induces a fully-defined map on intersection homology
$$I^{\bar p}H_i(X;F)\otimes I^{\bar q}H_j(X;F)\to I^{\bar r}H_{i+j-n}(X;F).$$ If $\bar p+\bar q= \bar  t$, i.e. $\bar p(Z)+\bar q(Z)=\codim(Z)-2$ for all singular strata $Z$, and we compose with the augmentation $\epsilon: I^{\bar t}H_0(X;F)\to F$, then the intersection pairing induces a nonsingular pairing $$\pfb: I^{\bar p}H_i(X;F)\otimes I^{\bar q}H_{n-i}(X;F)\to I^{\bar t}H_{0}(X;F)\to F,$$ whose adjoint is the duality isomorphism $I^{\bar p}H_i(X;F)\cong \Hom(I^{\bar q}H_{n-i}(X;F),F)$. If $Y$ is a compact $\bd$-stratified pseudomanifold, we similarly obtain an analogous Lefschetz-type duality $I^{\bar p}H_i(Y;F)\cong \Hom(I^{\bar q}H_{n-i}(Y,\bd Y;F),F)$ from the partially-defined chain intersection pairing $\pfa: I^{\bar p}C_i(X;F)\times I^{\bar q}C_j(X, \bd X;F)\to I^{\bar r}C_{i+j-n}(X;F)$.

\subsection{Perverse signatures}\label{S: perv sig}

Now we can define perverse signatures.

If $X$ is a PL stratified pseudomanifold and $\bar p\leq \bar q$, let $$I^{\bar p\to\bar q}H_*(X;\Q)=\im(I^{\bar p}H_*(X;\Q)\to I^{\bar q}H_*(X;\Q)),$$ where the map is induced by inclusion of chain complexes. If $(Y,Z)$ is a pair of a pseudomanifold and any subspace $Z$, let $$I^{\bar p\onto\bar q}H_*(Y,Z;\Q)=\im (I^{\bar p}H_*(Y;\Q)\to I^{\bar q}H_*(Y,Z;\Q)).$$ The reason for the double arrow in the second notation is to highlight that  $I^{\bar p\onto\bar q}H_*(Y,Z;\Q)$ is really the image of a composition of two maps taking the perversity from $\bar p$ to $\bar q$ and the space from $Y_i$ to $(Y,Z)$.

By duality, if $X$ is s-closed, oriented, and $4n$-dimensional, and if $Y$ is a compact, oriented $4n$-dimensional $\bd$-stratified pseudomanifold, then if $\bar p+\bar q=\bar t$, there are  nonsingular intersection pairings 
\begin{align}\label{A: int}
\pfb: I^{\bar p}H_{2n}(X;\Q)\otimes I^{\bar q}H_{2n}(X;\Q)\to \Q\\
\pfb: I^{\bar p}H_{2n}(Y;\Q)\otimes I^{\bar q}H_{2n}(Y,\bd Y;\Q)\to \Q.
\end{align}
These are each induced by the chain level pairing of chains in stratified general position, followed by augmentation $\epsilon: I^{\bar t}H_0(X;F)\to F$.
If also $\bar p\leq \bar q$, this induces  pairings 
\begin{align*}
\pfc:I^{\bar p\to\bar q}H_{2n}(X;\Q)\otimes I^{\bar p\to\bar q}H_{2n}(X;\Q)&\to\Q\\
\pfc:I^{\bar p\onto\bar q}H_{2n}(Y,\bd Y;\Q)\otimes I^{\bar p\onto\bar q}H_{2n}(Y,\bd Y;\Q)&\to\Q,
\end{align*} 
Explicitly, if $j_*:I^{\bar p}H_*(X;\Q)\to I^{\bar q}H_*(X;\Q)$ is induced by the inclusion of chains $j:I^{\bar p}C_*(X;\Q)\to I^{\bar q}C_*(X;\Q)$, then 
$j_*[x]\pfc j_*[y]$ is defined to be $[x]\pfb j_*[y]$, which itself is defined to be  $\epsilon[x\pfa j(y)]=\epsilon[x\pfa y]$, assuming $x$ and $y$ are representative chains for their intersection homology classes in stratified general position (which can always be achieved). The construction of the pairing on $I^{\bar p\onto\bar q}H_{2n}(Y,\bd Y;\Q)$ is completely analogous. 

\begin{lemma}
The pairings 
\begin{align*}
\pfc: I^{\bar p\to\bar q}H_*(X;\Q)\otimes I^{\bar p\to\bar q}H_*(X;\Q)&\to \Q\\
\pfc: I^{\bar p\onto\bar q}H_*(Y,\bd Y;\Q)\otimes I^{\bar p\onto\bar q}H_*(Y,bd Y;\Q)&\to \Q 
\end{align*}
are well-defined, nonsingular, and symmetric. 
\end{lemma}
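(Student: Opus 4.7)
The plan is to handle the three assertions---well-defined, nonsingular, and symmetric---separately, in each case reducing to the corresponding property of the nonsingular ambient pairings $\pfb$ furnished by Poincar\'e/Lefschetz duality, together with graded commutativity of the chain-level intersection pairing. I give the argument for the absolute case on $I^{\bar p\to\bar q}H_{2n}(X;\Q)$; the case of $I^{\bar p\onto\bar q}H_{2n}(Y,\bd Y;\Q)$ is parallel, with $\pfb$ replaced by the Lefschetz-duality pairing $I^{\bar p}H_{2n}(Y;\Q)\otimes I^{\bar q}H_{2n}(Y,\bd Y;\Q)\to\Q$ and with absolute cycles pushed off $\bd Y$ using the collar from Definition \ref{def boundary} wherever components of chains live in $\bd Y$.

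For well-definedness, the value $[x]\pfb j_*[y]$ depends only on $j_*[y]$, so it suffices to check independence of the representative $[x]$ modulo $\ker j_*$. If $j_*[x]=0$, then $j(x)=\bd w$ for some $w\in I^{\bar q}C_{2n+1}(X;\Q)$; after a standard PL general-position perturbation of $w$ relative to a cycle representative $y$ of $[y]$, the chain-level Leibniz rule yields $\bd w\pfa y=\pm\bd(w\pfa y)\pm w\pfa\bd y=\pm\bd(w\pfa y)$ since $\bd y=0$. The right-hand side is the boundary of a $1$-chain in $I^{\bar t}C_*(X;\Q)$, which by $\bar t$-allowability avoids $X^{n-1}$, so the modified and usual boundaries agree and the augmentation vanishes. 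Symmetry is then immediate from graded commutativity: for chains of dimension $2n$ on a $4n$-dimensional pseudomanifold the sign $(-1)^{(4n-2n)(4n-2n)}$ equals $+1$, so $\epsilon[x\pfa y]=\epsilon[y\pfa x]$ and therefore $j_*[x]\pfc j_*[y]=j_*[y]\pfc j_*[x]$.

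The main step is nonsingularity. Consider the auxiliary map $\Phi:I^{\bar p}H_{2n}(X;\Q)\to\Hom(I^{\bar p}H_{2n}(X;\Q),\Q)$ defined by $\Phi([x])([y])=[x]\pfb j_*[y]$. The symmetry just verified rewrites this as $\Phi([x])([y])=[y]\pfb j_*[x]$, exhibiting $\Phi$ as the composition
\[
I^{\bar p}H_{2n}(X;\Q)\xrightarrow{j_*}I^{\bar q}H_{2n}(X;\Q)\xrightarrow{D}\Hom(I^{\bar p}H_{2n}(X;\Q),\Q),
\]
where $D$ is the Poincar\'e duality isomorphism induced by the nonsingular pairing $\pfb$. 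Hence $\ker\Phi=\ker j_*$. On the other hand, unraveling the definition gives $\Phi=j_*^{*}\circ\Psi\circ j_*$, where $\Psi$ is the adjoint of $\pfc$ on $I^{\bar p\to\bar q}H_{2n}(X;\Q)$ and $j_*^{*}$ is the injection $\Hom(I^{\bar p\to\bar q}H_{2n}(X;\Q),\Q)\hookrightarrow\Hom(I^{\bar p}H_{2n}(X;\Q),\Q)$ dual to the surjection $j_*:I^{\bar p}H_{2n}(X;\Q)\onto I^{\bar p\to\bar q}H_{2n}(X;\Q)$. A kernel chase using surjectivity of $j_*$ and injectivity of $j_*^{*}$ then gives $\ker\Psi=0$, and finite-dimensionality of intersection homology (from compactness of $X$) promotes injectivity to the desired isomorphism. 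The principal obstacle is exactly this nonsingularity bookkeeping: one must verify that the symmetric role of the two slots of $\pfb$ is what forces the factorization of $\Phi$ through $j_*$, and then chase kernels through the surjection--injection pair; everything else reduces to the Leibniz rule, the collar pushoff, and a sign count in the correct degree.
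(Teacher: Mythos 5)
Your proposal is correct, but the mechanics differ from the paper's proof in two places. For well-definedness the paper does not use the Leibniz rule: it chooses a $\bar p$-allowable representative of $j_*[y]$ (possible because $j_*[y]$ lies in the image of $I^{\bar p}H_{2n}(X;\Q)$), flips $u\pfa y=\pm y\pfa u$, and invokes the already-known homology-level invariance of $\pfb$ to conclude $[y]\pfb j_*[u]=0$; you instead write $x=\bd w$ with $w$ a $\bar q$-allowable $(2n{+}1)$-chain and kill the augmentation by exhibiting $x\pfa y$ as $\bd(w\pfa y)$ for a $\bar t$-allowable $1$-chain. Your route works, but note that it hinges on exactly the point the paper is careful about elsewhere (in the discussion of $\tilde\Phi$): the product $w\pfa y$ is only defined in $I^{\bar t}C_*(X;\Q)$ because the representative $y$ can be taken $\bar p$-allowable; with a general $\bar q$-allowable representative of $j_*[y]$ the intersection $\bar q\pfa\bar q$ need not be admissible, so this hypothesis should be stated explicitly rather than left implicit in ``a cycle representative $y$ of $[y]$.'' (Your observation that $\bar t$-allowable $1$-simplices avoid $X^{n-1}$, so the stratified-coefficient boundary agrees with the ordinary one and the augmentation of a boundary vanishes, is a nice touch that the paper does not need.) For nonsingularity the paper argues element-wise: given $j_*[y]\neq 0$, Goresky--MacPherson duality produces $[x]$ with $[x]\pfb j_*[y]\neq 0$, and symmetry shows $j_*[x]\neq 0$, so $j_*[x]\pfc j_*[y]\neq 0$; your factorization $\Phi=D\circ j_*=j_*^{*}\circ\Psi\circ j_*$ plus the kernel chase is the same argument packaged as linear algebra, with finite-dimensionality promoting injectivity of the adjoint to an isomorphism (the paper only needs nondegeneracy, so it never invokes finite-dimensionality here). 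Your treatment of the relative case by collar pushoff matches the paper's one-line remark in spirit; a complete write-up would additionally note that when $j_*[x]=0$ in $I^{\bar q}H_{2n}(Y,\bd Y;\Q)$ the boundary term supported in $\bd Y$ misses the interior representative of the other class, which is what the collar buys you.
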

\begin{proof}
We will treat the first pairing in detail. The second is handled similarly. 

Now let $j_*[x],j_*[y]\in I^{\bar p\to\bar q}H_*(X;\Q)$ with $j_*$ as above. By definition, $j_*[x] \pfc j_*[y]=[x]\pfb j_*[y]$, which itself is defined to be the augmentation of $x\pfa j(y)$, assuming $x$ and $y$ are representative chains for their intersection homology classes in stratified general position (which can always be achieved). By the arguments in \cite{GM1}, $[x]\pfb j_*[y]$ is independent of the choice of $x$ and $y$ within their respective intersection homology classes, again assuming stratified general position. To establish well-definedness of $\pfc$, we must show that $[x]\pfb j_*[y]=[z]\pfb j_*[y]$ if $j_*[x]=j_*[z]$.

By the bilinearity of $\pfb$ (arising from that of $\pfa$), it suffices to show that $[u]\pfb j_*[y]=0$ if $[u]\in I^{\bar p}H_{2k}(X;\Q)$ and $j_*[u]=0\in I^{\bar q}H_{2k}(X;\Q)$. Since $[u]\pfb j_*[y]$ is independent of the choice of cycle representing  $j_*[y]$ in $I^{\bar q}H_{2k}(X;\Q)$, we may assume we have chosen a representative  $y$ that is $\bar p$-allowable as $j_*[y]$ is in the image of $I^{\bar p}H_{2k}(X;\Q)$. Further, we may assume $u$ is a $\bar p$-allowable representative of $[u]$ and that $u,y$ are in stratified general position.  Since the chain level intersection pairing of $2k$-chains is symmetric \cite{GM1}, $u\pfa y=y\pfa u$ up to sign. But now by the same reasoning as above, the augmentation of $y\pfa u$ equals $[y]\pfb j_*[u]$, where now $[y]\in I^{\bar p}H_{2k}(X;\Q)$ and $j_*[u]\in I^{\bar q}H_{2k}(X;\Q)$. But by assumption, $j_*[u]=0\in I^{\bar q}H_{2k}(X;\Q)$. It follows that $[u]\pfb j_*[y]=0$, so $\pfc$ is well-defined.

The symmetry of the pairing $\pfc$ comes from the symmetry of the chain level pairing \cite{GM1}: if $x,y\in I^{\bar p}C_{2k}(X;\Q)$ are in stratified general position, then $x\pfa y=y\pfa x\in I^{\bar r}C_0(X;\Q)$ for any $\bar r\geq \bar p+\bar p$. In particular, it then follows that for $[x],[y]\in I^{\bar p}H_{2k}(X;\Q)$, we have $$j_*[x]\pfc j_*[x]=[x]\pfb j_*[y]=\epsilon[x\pf y]=\epsilon[y\pf x]=[y]\pfb j_*[x]=[y]\pfc [x].$$ 

Finally, to see that $\pfc$ is nonsingular, we simply note that if $j_*[y]\in I^{\bar p\to\bar q}H_{2n}(X;\Q)\subset I^{\bar q}H_{2n}(X;\Q)$ with $j_*[y]\neq 0$, then there must be an $[x]$ in  $I^{\bar p}H_{2n}(X;\Q)$ such that $[x]\pfb j_*[y]\neq 0$ by Goresky-MacPherson-Poincar\'e duality. But then, up to signs, $[x]\pfb j_*[y]=\epsilon(x\pfa y)=\epsilon(y\pfa x)=[y]\pfb j_*[x]$, where $\epsilon$ is the augmentation and $x,y$ are appropriately chosen representative chains, 
so the image $j_*[x]$ in $I^{\bar p\to\bar q}H_{2n}(X;\Q)$  must be non-zero. In particular, we see that $j_*[x]\pfc j_*[y]\neq 0$, so $\pfc$ is nonsingular.

Minor modifications of the same argument work for $I^{\bar p\onto \bar q}H_{2n}(Y,\bd Y;\Q)$, representing all elements as $\bar p$-allowable cyles in $Y$.
\end{proof}

\begin{definition}
Let the \emph{$\bar p \to \bar q$  perverse signatures} of $X$ and $Y$,
denoted $\sigma_{\bar p\to\bar q}(X)$ and $\sigma_{\bar p\onto\bar q}(Y)$, be the  respective signatures of $\pfc$ on  $I^{\bar p\to\bar q}H_*(X;\Q)$ and $I^{\bar p\onto\bar q}H_*(Y,\bd Y;\Q)$.
\end{definition}

\subsection{A relative-perversity intersection homology pairing}\label{S: pairing}

In this subsection, we will study the ``relative perversity'' intersection homology groups that provide the symplectic pairing for our non-additivity theorem. The relationship between this pairing and the usual Goresky-MacPherson intersection pairing is akin to the relationship between the intersection pairing on the manifold boundary of a manifold and the pairing in its interior. In fact, this will be made precise in Section \ref{S: manifold}, where we will show it reduces to the intersection pairing on the topological boundary of a manifold in the appropriate context.

These groups are essentially the same as the hypercohomology groups of the ``peripheral complex'' defined sheaf-theoretically\footnotemark  \, in \cite{CS}, though there Cappell and Shaneson work with particular perversities in a much more specific topological context. It is observed in \cite{CS} that the duality pairing of these groups 
(compare Lemma \ref{L: pairing}, below) follows from the Verdier duality properties of the Deligne sheaves used to define intersection homology sheaf theoretically. We have chosen instead to work with these groups from a PL-chain point of view, in keeping with the overall spirit of this paper. The chain theory also allows us to define our dual pairing using geometric intersections, which both is useful in the work that follows and makes the (anti)symmetry properties of the pairing easier to observe. We will provide full proofs in this context.
\footnotetext{Despite a close relation between the two papers, the peripheral complex of \cite{CS} appears to be somewhat different from the peripheral complex defined by Goresky and Siegel in \cite{GS83}. The peripheral complex of Cappell-Shaneson is defined to be the third term in the distinguished triangle involving sheaf complex maps of the form $\mc I^{\bar p}\mc C^*\to \mc I^{\bar q}\mc C^*$ for $\bar p\leq \bar q$, while those of Goresky-Siegel are affiliated to maps of the form $\mc I^{\bar p}\mc C^*\to \text{\emph{RHom}}(\mc I^{\bar q}\mc C^*,\D^*[n])$.}

\paragraph{Motivation.} 
To motivate  the groups we will need, let $M$ be a $\bd$-manifold with $\bd M\neq\emptyset$,  let $G$ be an abelian group, and 
consider the long exact sequence 

\begin{equation*}
\to H_i(M;G) \to H_i(M,\bd M;G)\to H_{i-1}(\bd M;G)\to .
\end{equation*}
By contrast, for a $\bd$-stratified pseudomanifold  $X$ and $\bar p\leq \bar q$, we will compare this with a long exact sequence 

\begin{equation*}
\to I^{\bar p}H_i(X;G) \to I^{\bar q}H_i(X;G)\to I^{\bar q/\bar p}H_{i}(X;G)\to ,
\end{equation*}
where $I^{\bar q/\bar p}H_{i}(X;G)$ is defined to be the homology of the quotient  
$I^{\bar q}C_{i}(X;G)/I^{\bar p}C_{i}(X;G)$. 

But suppose $M$ is a $\bd$-manifold with non-empty  boundary $\bd M$, and let $X$ denote the stratified pseudomanifold  $M\supset \bd M$. If we choose perversities $\bar p,\bar q$ such that  $\bar p(\bd M)<0$ and $\bar q(\bd M)>\bar t(\bd M)=-1$, then an easy computation (see \cite{GBF23}), shows that $I^{\bar p}H_*(X;G)\cong H_*(M;G)$ and $I^{\bar q}H_*(X;G)\cong H_*(M, \bd M;G)$. Thus we expect $I^{\bar q/\bar p}H_{i}(X;G)$ to play a role analogous to that classically played by the manifold boundary of a $\bd$-manifold, though with a dimension shift. We will make this connection with manifold boundaries even more precise in Section \ref{S: manifold}.

\paragraph{Two $\bar q/\bar p$ long exact sequences.} 
Let $X$ be an $n$-dimensional PL $\bd$-stratified pseudomanifold. We continue to assume  $G$ an abelian group  and  $\bar p, \bar q$ general perversities such that $\bar p(Z)\leq \bar q(Z)$ for all singular strata $Z\subset X$. Let  $I^{\bar q/\bar p}C_*(X; G)=I^{\bar q}C_*(X; G)/I^{\bar p}C_*(X; G)$, and let $I^{\bar q/\bar p}H_*(X; G)$ denote the corresponding homology groups. These groups first appear sheaf-theoretically in \cite[Section 5.5]{GM2}; we will refer to them as \emph{relative perversity intersection homology groups}.

Note that a cycle $x$ in $I^{\bar q/\bar p}C_i(X; G)$ is a $\bar q$-allowable chain such that $\bd x$ is $\bar p$-allowable. A homology between cycles $x_1$ and $x_2$ is provided by a $\bar q$-allowable chain $y$ such that $\bd y=x_1-x_2+p$, where $p$ is $\bar p$-allowable.

Suppose  $Y\subset X$ is a PL subspace (without restrictions) and that we let $I^{\bar r}C_*(Y; G)$ denote the subcomplex of $I^{\bar r}C_*(X; G)$ consisting of chains supported in $Y$. Note that $I^{\bar r}C_*(Y; G)$ might be $0$, for example if $Y\subset X^{n-1}$. In all of our later applications, $Y$ will itself be a $\bd$-stratified pseudomanifold and $I^{\bar r}C_*(Y; G)$ will be equal to the complex of intersection chains on $Y$ in the usual sense, so the notation should cause no undue alarm. Then we have the diagram

\begin{diagram}[LaTeXeqno]\label{E: sequences}
0&\rTo &I^{\bar p}C_*(Y; G) &\rTo^{i_{\bar p\to\bar q}}& I^{\bar q}C_*(Y; G)&\rTo^{\pi_{\bar q/\bar p}}& I^{\bar q/\bar p}C_*(Y; G)&\rTo &0\\
&&\dInto^{i_{Y\subset X}}&&\dInto^{i_{Y\subset X}}&&\dInto^{i_{Y\subset X}}\\
0&\rTo &I^{\bar p}C_*(X; G) &\rTo^{i_{\bar p\to\bar q}}& I^{\bar q}C_*(X; G)&\rTo^{\pi_{\bar q/\bar p}}& I^{\bar q/\bar p}C_*(X; G)&\rTo &0 \\
&&\dOnto^{\pi_{X,Y}}&&\dOnto^{\pi_{X,Y}}&&\dOnto^{\pi_{X,Y}}\\
0&\rTo &I^{\bar p}C_*(X,Y; G) &\rTo^{i_{\bar p\to\bar q}}& I^{\bar q}C_*(X,Y; G)&\rTo^{\pi_{\bar q/\bar p}}& I^{\bar q/\bar p}C_*(X,Y; G)&\rTo &0,
\end{diagram}
where $I^{\bar q/\bar p}C_*(Y; G)=I^{\bar q}C_*(Y; G)/I^{\bar p}C_*(Y; G)$ and
$I^{\bar q/\bar p}C_*(X,Y; G)$ is defined to be the quotient of  $I^{\bar q/\bar p}C_*(X; G)$ by  $I^{\bar q/\bar p}C_*(Y; G)$. 
The righthand vertical map between the first two rows is an injection because any chain supported in $Y$ that is $\bar p$-allowable in $X$ (and hence $0$ in $I^{\bar q/\bar p}C_*(X; G)$) will already be in $I^{\bar p}C_*(Y;G)$. Therefore, by the serpent lemma, the last row is also a short exact sequence. 
In particular, we have long exact sequences associated to the third row of this diagram:
{\footnotesize
\begin{diagram}[LaTeXeqno]\label{E: LES1}
&\rTo &I^{\bar p}H_i(X,Y; G) &\rTo^{(i_{\bar p\to\bar q})_*}& I^{\bar q}H_i(X,Y; G)&\rTo{(\pi_{\bar q/\bar p})_*}& I^{\bar q/\bar p}H_i(X,Y; G)&\rTo^d &I^{\bar p}H_{i-1}(X,Y; G)&\rTo&,
\end{diagram}}
\noindent and to the third column: 
{\footnotesize
\begin{diagram}[LaTeXeqno]\label{E: LES}
&\rTo &I^{\bar q/\bar p}H_i(Y; G) &\rTo^{(i_{Y\subset X})_*}& I^{\bar q/\bar p}H_i(X; G)&\rTo^{(\pi_{X,Y})_*}& I^{\bar q/\bar p}H_i(X,Y; G)&\rTo^\delta &I^{\bar q/\bar p}H_{i-1}(Y; G)&\rTo&.
\end{diagram}
}
Observe that the ``connecting maps'' $d$ can  be described as acting on a representative chain $x$ by taking $x$ to its boundary $\bd x$. Meanwhile,  an element of $I^{\bar q/\bar p}H_*(X,Y;G)$ can be represented by a chain $x$ such that $\bd x=a+b$ with $b$ a $\bar q$-allowable chain in $Y$ and $a$ a $\bar p$-allowable chain in $X$, and $\delta x$ is represented by $b$.
Note also that a representative $x$ of a class in  $I^{\bar q/\bar p}H_*(X,Y; G)$ is a $\bar q$-allowable chain on $X$
such that $\bd x$ is the sum of a $\bar p$-allowable chain on $X$ and a $\bar q$-allowable
chain on $Y$.  We will use this fact in the proof of our main theorem.

\paragraph{A pairing on relative perversity intersection homology.}
Now we want to define intersection pairings on our relative perversity intersection homology groups. No doubt the pairing we are about to introduce can be derived from abstract sheaf machinery via the Verdier duality properties of Deligne sheaves, but it will be useful for us to have a concrete geometric description, especially as this will make evident the needed (anti)symmetry properties of the pairing.

Suppose $\bar p+\bar q\leq \bar r$ and $R$ is a ring. We define a pairing $\Phi: I^{\bar q/\bar p}H_i(X;R)\otimes I^{\bar q/\bar p}H_j(X,\bd X;R)\to I^{\bar r}H_{n-i-j+1}(X;R)$ as follows. Let $\pfa$ denote the Goresky-MacPherson intersection pairing on intersection chains. If $x,y$ are chains in stratified general position representing respective elements of $I^{\bar q/\bar p}H_i(X;R)$ and $I^{\bar q/\bar p}H_j(X,\bd X;R)$, let $\tilde{\Phi}(x,y)=x\pfa \bd y+(-1)^{n-|x|} (\bd x)\pfa y\in I^{\bar r}C_{n-i-j+1}(X;R)$, where
$|x|$ denotes the degree of $x$. 
We need to see that $\tilde{\Phi}$ makes sense as a map on chains, and then
we want to show it descends to a well-defined pairing $\hat \Phi([x],[y])$ on homology.

To make sense on chains, we need to know that $x\pfa \bd y+(-1)^{n-|x|} (\bd x)\pfa y$ is an $\bar r$-allowable chain.  It follows from the standard stratified general position arguments \cite{Mc78, GM1,GBF18} that we can choose $x$ and $y$ in stratified general position (which includes boundaries being in stratified general position with respect to $x$, $y$, and each other), and we may also assume that $x$ does not intersect $\bd X$. The Goresky-MacPherson intersection pairing extends to the relevant chains in this setting by \cite{GBF18}. Note that each intersection $x\pfa \bd y$ or $(\bd x)\pfa y$ is between a $\bar p$-allowable chain and a $\bar q$-allowable chain; to see this in the case of $x\pfa \bd y$, we should observe that $\bd y$ is the sum of a $\bar p$-allowable chain in $X$ and a $\bar q$-allowable chain in $\bd X$, but the part in $\bd X$ does not intersect $x$, which can be assumed to lie in the interior of $X$, so we have the intersection of a $\bar q$-admissible chain with a 
$\bar p$-admissible chain, which will be $\bar r$-admissible. 
Furthermore\footnote{We use the sign conventions of Dold \cite{Dold} or Goresky-MacPherson \cite{GM1} so that $\bd (a\pfa b)=(\bd a)\pfa b+(-1)^{n-|a|}a\pfa(\bd b)$.}, 
\begin{align*}
\bd \tilde{\Phi}(x,y)&=(\bd x)\pfa (\bd y)+(-1)^{n-|x|+n-|x|-1} (\bd x)\pfa (\bd y)\\
&=(\bd x)\pfa (\bd y)-(\bd x)\pfa (\bd y)\\
&=0,
\end{align*} 
so indeed we obtain an admissible $\bar r$-cycle. It is important to note that, despite appearances, this cycle is  not necessarily the boundary $(-1)^{n-|x|} \bd(x\pfa y)$ as $x\pfa y$ is the 
intersection of two $\bar q$ admissible chains, thus is not necessarily well-defined in $I^{\bar r}C_*(X;R)$ unless $\bar q+\bar q\leq r$.

To show that this pairing is well-defined on homology, suppose that $z$ is another chain representing the same class as $x$ and in stratified general position with respect to $y$. Then from the definitions, $z-x=\bd Q+P$, where $Q$ is another $\bar q$-allowable chain whose boundary is $\bar p$-allowable and $P$ is $\bar p$-allowable. Again, we can assume everything in stratified general position and that $P$ and $Q$ do not intersect $\bd X$. Then 

\begin{align*}
\tilde{\Phi}(z,y)&=z\pfa \bd y+(-1)^{n-|z|} (\bd z)\pfa y\\
&=(x+\bd Q+P)\pfa \bd y+(-1)^{n-|z|} (\bd (x+\bd Q+P))\pfa y\\
&=x\pfa \bd y +(-1)^{n-|z|} (\bd x)\pfa y + (\bd Q+P)\pfa \bd y+(-1)^{n-|z|} (\bd P)\pfa y\\
&=x\pfa \bd y +(-1)^{n-|z|} (\bd x)\pfa y + P\pfa \bd y+(-1)^{n-|z|} (\bd P)\pfa y +\bd Q\pfa \bd y.
\end{align*}
Note that each intersection is of a $\bar p$-allowable chain with a $\bar q$-allowable chain since $\bar p$-allowable chains are also $\bar q$-allowable. Now, since $|z|=|P|$, we see that $P\pfa \bd y+(-1)^{n-|z|} (\bd P)\pfa y=(-1)^{n-|z|}\bd (P\pfa y)$, which is well defined because $P$ is $\bar p$-allowable and $y$ is $\bar q$-allowable. Similarly, $\bd Q\pfa \bd y=\bd (Q\pfa \bd y)$, using again that only the $\bar p$-allowable part of $\bd y$ can intersect $Q$. Thus $\tilde{\Phi}(z,y)=\tilde{\Phi}(x,y)$. 
A similar argument shows that the pairing is independent of the choice of chain representing $[y]$,
so $\hat \Phi([x],[y])$ is well-defined in  $I^{\bar r}H_{n-i-j+1}(X;R)$.

Now, let $Z$ be a compact oriented $4n-1$ PL $\bd$-stratified pseudomanifold. Suppose $\bar p+\bar q=\bar t$. Then the composition of  $\hat \Phi: I^{\bar q/\bar p}H_{i}(Z;\Q) \otimes I^{\bar q/\bar p}H_{4n-i}(Z,\bd Z;\Q)\to I^{\bar t}H_0(Z;\Q)$ with the augmentation $\epsilon: I^{\bar t}H_0(Z;\Q)\to \Q$  gives us a bilinear form $\Phi: I^{\bar q/\bar p}H_{i}(Z;\Q) \otimes I^{\bar q/\bar p}H_{4n-i}(Z,\bd Z;\Q)\to \Q$.

\begin{lemma}\label{L: pairing}
$\Phi: I^{\bar q/\bar p}H_{i}(Z;\Q) \otimes I^{\bar q/\bar p}H_{4n-i}(Z,\bd Z;\Q)\to \Q$ is nonsingular, and if $\bd Z=\emptyset$, it is skew-symmetric on   $I^{\bar q/\bar p}H_{2n}(Z;\Q)$.
\end{lemma}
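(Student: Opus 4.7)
My plan is to prove nonsingularity by a five-lemma argument comparing the long exact sequence of relative-perversity intersection homology with the $\Q$-dual of a related one, and to prove skew-symmetry by a direct chain-level calculation exploiting the graded commutativity of $\pfa$.

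For nonsingularity, let $\Phi_*\colon I^{\bar q/\bar p}H_i(Z;\Q) \to \Hom\bigl(I^{\bar q/\bar p}H_{4n-i}(Z,\bd Z;\Q),\Q\bigr)$ be the adjoint of $\Phi$. I would fit $\Phi_*$ into a commutative ladder of five columns whose top row is the portion
$$I^{\bar p}H_i(Z;\Q) \to I^{\bar q}H_i(Z;\Q) \to I^{\bar q/\bar p}H_i(Z;\Q) \xrightarrow{d} I^{\bar p}H_{i-1}(Z;\Q) \to I^{\bar q}H_{i-1}(Z;\Q)$$
of \eqref{E: LES1} specialized to $Y=\emptyset$, and whose bottom row is the $\Q$-dual (with arrows reversed) of the corresponding five-term piece of \eqref{E: LES1} for the pair $(Z,\bd Z)$ in degrees $4n-i$ and $4n-i-1$. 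Using the Poincar\'e-Lefschetz isomorphisms $I^{\bar p}H_k(Z;\Q) \cong \Hom(I^{\bar q}H_{4n-1-k}(Z,\bd Z;\Q),\Q)$ and $I^{\bar q}H_k(Z;\Q) \cong \Hom(I^{\bar p}H_{4n-1-k}(Z,\bd Z;\Q),\Q)$ coming from the pairings $\pfb$ of \eqref{A: int}, each of the four outer terms of the bottom row is identified with the corresponding outer term of the top row, and $\Phi_*$ occupies the middle column. Provided each square commutes, the five-lemma implies that $\Phi_*$ is an isomorphism, which is nonsingularity of $\Phi$.

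The main obstacle is verifying commutativity of each square, which amounts to careful chain-level computation with the formula $\tilde\Phi(x,y) = x\pfa\bd y + (-1)^{n-|x|}(\bd x)\pfa y$ (where the ``$n$'' in the sign equals $\dim Z = 4n-1$ in our setting). As an illustration of the key calculation, consider the square with top edge $(\pi_{\bar q/\bar p})_*\colon I^{\bar q}H_i(Z;\Q) \to I^{\bar q/\bar p}H_i(Z;\Q)$, paired with a class $[y]\in I^{\bar q/\bar p}H_{4n-i}(Z,\bd Z;\Q)$: represent the former by a $\bar q$-cycle $x$ with $\bd x = 0$, in stratified general position with respect to $y$ and chosen to avoid $\bd Z$, and write $\bd y = \alpha + \beta$ with $\alpha$ a $\bar p$-allowable chain on $Z$ and $\beta$ a $\bar q$-allowable chain on $\bd Z$. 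Then $x\pfa\beta = 0$, so $\Phi([x],[y]) = \epsilon(x\pfa\alpha) = \pfb([x], d[y])$, which is exactly the standard duality pairing matching the corresponding edge of the dualized bottom row. The remaining squares (the one on the other side of $\Phi_*$ involving $d$, and the two outermost squares, which only involve the perversity-comparison maps) admit analogous verifications; the real work is the sign bookkeeping interweaving $\tilde\Phi$, the connecting homomorphisms, and the graded commutativity of $\pfa$, though any uniform global sign in the ladder does not affect the five-lemma conclusion.

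For the second assertion, take $\bd Z = \emptyset$ and $i = 2n$, so both factors lie in $I^{\bar q/\bar p}H_{2n}(Z;\Q)$ with $\dim Z = 4n-1$ and $|x|=|y|=2n$. The graded commutativity $a\pfa b = (-1)^{(\dim Z -|a|)(\dim Z -|b|)}b\pfa a$ of the chain intersection pairing yields $y\pfa\bd x = (\bd x)\pfa y$ and $(\bd y)\pfa x = x\pfa\bd y$, since both sign exponents equal the even integer $(2n-1)(2n)$. Since $(-1)^{\dim Z - |x|} = (-1)^{2n-1} = -1$, substitution into the defining formula gives
$$\Phi([y],[x]) = \epsilon\bigl(y\pfa\bd x - (\bd y)\pfa x\bigr) = \epsilon\bigl((\bd x)\pfa y - x\pfa\bd y\bigr) = -\Phi([x],[y]),$$
establishing skew-symmetry.
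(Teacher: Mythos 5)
Your proposal is correct and follows essentially the same route as the paper: a five-lemma comparison of the $\bar q/\bar p$ long exact sequence for $Z$ with the $\Q$-dual of the corresponding sequence for $(Z,\bd Z)$, with the Goresky--MacPherson--Lefschetz duality isomorphisms in the outer columns, the adjoint of $\Phi$ in the middle, and commutativity checked up to sign (which, as you note, does not affect the conclusion). Your skew-symmetry computation via graded commutativity of $\pfa$ is also the paper's argument, with the same signs.
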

\begin{proof}
Consider  the following diagram with coefficients in $\Q$:

{\footnotesize
\begin{diagram}
&\rTo &I^{\bar q}H_{i}(Z) &\rTo&I^{\bar q/\bar p}H_{i}(Z)&\rTo & I^{\bar p}H_{i-1}(Z)&\rTo \\
&&\dTo&&\dTo&&\dTo\\
&\rTo &\Hom(I^{\bar p}H_{4n-1-i}(Z,\bd Z),\Q)&\rTo
& \Hom(I^{\bar q/\bar p}H_{4n-i}(Z,\bd Z),\Q)&\rTo& \Hom(I^{\bar q}H_{4n-i}(Z,\bd Z),\Q) &\rTo.
\end{diagram}}
The top is the long exact sequence induced by the short exact sequence
\begin{diagram}
0&\rTo& I^{\bar p}C_{*}(Z;\Q)&\rTo &I^{\bar q}C_*(Z;\Q) &\rTo&I^{\bar q/\bar p}C_{*}(Z;\Q)&\rTo 0.
\end{diagram}
The bottom is the $\Hom(\cdot,\Q)$ dual of the same long exact sequence for $(Z,\bd Z)$; it is also exact because $\Q$ is a field. The first and third vertical morphisms take a class $[x]$ to $[x] \pfb \cdot $. By Goresky-MacPherson \cite{GM1,GM2} (and \cite{GBF23} for general perversities and $\bd$-stratified pseudomanifolds), these are isomorphisms.  The second vertical map takes $[x]$ to $\Phi([x],\cdot)$. 

We claim that the diagram commutes up to sign.
It is  standard that the square with top corners $I^{\bar p}H_{i}(Z)$ and $I^{\bar q}H_{i}(Z)$ 
(not shown as a square on the diagram) commutes.
To see that the first square commutes, let $[x]\in I^{\bar q}H_{i}(Z)$, $[z]$ be the image of $[x]$ in $I^{\bar p/\bar q}H_{i}(Z)$, and $[y]\in I^{\bar q/\bar p}H_{4n-i}(Z,\bd Z)$.  Then $\Phi([z],[y])=\epsilon[z\pfa \bd y+(-1)^{4n-1-|x|}\bd z\pfa y]=\epsilon[z\pfa \bd y]=\epsilon[x\pfa \bd y]=[x]\pfb[y]$ because $\bd x=\bd z=0$ and $[z]$ and $[x]$ are represented by the same chain. On the other hand, going down then right takes $[x]$ to a map that acts on $[y]$ by first applying the map to the left on homology that gives $[\bd y]$ and then applying $[x]\pfb \cdot$. So this square also commutes. 

To see that the second square commutes up to sign, suppose $[x]\in I^{\bar q/\bar p}H_{i}(Z)$. Then going right then down takes $[x]$ first to $[\bd x]$, then to the map that acts on $[y]\in I^{\bar q}H_{4n-i+1}(Z,\bd Z)$ by $[\bd x]\pfb [y]$ (note that $\bd y$ is supported in $\bd Z$ and cannot intersect $x$, which can be assumed to have support in the interior of $Z$). On the other hand, going down then right takes $[x]$ to the map that acts on $[y]\in I^{\bar q}H_{4n-i+1}(Z,\bd Z)$ by first taking it to $[y] \in I^{\bar q/\bar p}H_{4n-i}(Z,\bd Z)$ and then applying $\Phi([x],\cdot)$ to obtain $\epsilon[x\pfa \bd y+(-1)^{4n-1-|x|}(\bd x)\pfa y]$. But again $\bd y$ must lie in $\bd Z$, so this is just $\epsilon[(-1)^{4n-1-|x|}(\bd x)\pfa y]=(-1)^{4n-1-|x|}[\bd x]\pfb [y]$.

We can now apply the five lemma to conclude that $\Phi$ determines a nonsingular pairing. Even though the diagram does not commute on the nose, commuting up to sign implies that it is possible to change signs of some of the maps to obtain a commuting diagram. Changing signs does not affect exactness of the horizontal sequences.

To show $\Phi$ is anti-symmetric when $i=2n$ and $\bd Z=\emptyset$, we calculate \footnote{Recall that on an $m$-dimensional $\bd$-stratified pseudomanifold   $a\pfa b=(-1)^{(m-|a|)(m-|b|)}b\pfa a$; see \cite{Dold, GM1}.\label{fn}},
\begin{align*}
\hat \Phi([x],[y])&=[x\pfa (\bd y)+(-1)^{4n-1-|x|}(\bd x)\pfa y]\\
&=[x\pfa (\bd y)-(\bd x)\pfa y]\\
&= [(-1)^{(4n-1-|x|)(4n-1-(|y|-1))} (\bd y)\pfa x- (-1)^{(4n-1-|y|)(4n-1-(|x|-1))}y\pfa (\bd x)]\\
&= [(-1)^{(4n-1-2n)(4n-1-(2n-1))} (\bd y)\pfa x- (-1)^{(4n-1-2n)(4n-1-(2n-1))}y\pfa (\bd x)]\\
&= [ (\bd y)\pfa x- y\pfa (\bd x)]\\
&= -\hat \Phi([y],[x]).
\end{align*}
\end{proof}

In Section \ref{S: manifold}, below, we will show that if $X$ is a $4n-1$-manifold with non-empty manifold boundary, appropriately stratified and with an appropriate choice of perversities, then $\Phi$ becomes the classical intersection pairing on $\bd X$.

\section{Non-additivity of perverse signatures}\label{S: Wall}

This section contains our non-additivity theorems.  We prove our first main result, on
non-additivity of perverse signatures
for pseudomanifolds, in the first subsection, then obtain our
second main result, for $\bd$-stratified pseudomanifolds, as a corollary
in the second subsection.

\subsection{Non-additivity of perverse signatures for pseudomanifolds}

In this section, we prove a generalization of the Wall non-additivity theorem for the perverse pairings of intersection homology theory. The general outline of the proof is the same as that in \cite{Wa69}, but there are some subtleties and generalizations that need to be addressed.

Throughout this section, let $X$ be a $\Q$-oriented s-closed stratified $4n$-pseudomanifold (it may possess codimension one strata). Let $Z\subset X$ be a bicollared codimension one subpseudomanifold such that $X=Y_1\cup_Z Y_2$ and $\bd Y_1=Z=-\bd Y_2$, accounting for orientations.

Let $V=I^{\bar q/\bar p}H_{2n}(Z;\Q)$ equipped with the anti-symmetric pairing $\Phi$ defined in Section \ref{S: pairing}. Let $A=\ker(i_{Z\subset Y_1}:I^{\bar q/\bar p}H_{2n}(Z;\Q)\to I^{\bar q/\bar p}H_{2n}(Y_1;\Q))$, $C=\ker(i_{Z\subset Y_2}:I^{\bar q/\bar p}H_{2n}(Z;\Q)\to I^{\bar q/\bar p}H_{2n}(Y_2;\Q))$, and  let $B=\ker(d: I^{\bar q/\bar p}H_{2n}(Z;\Q)\to I^{\bar p}H_{2n-1}(Z;\Q))$. 

\begin{theorem}\label{T: Wall}
With the assumptions and definitions considered above, 
$$\sigma_{\bar p\to\bar q}(X)=\sigma_{\bar p\onto\bar q}(Y_1)+\sigma_{\bar p\onto\bar q}(Y_2)+ \sigma(V;A,B,C),$$ 
where $\sigma$ is Wall's Maslov triple index. 
\end{theorem}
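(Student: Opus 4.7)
The plan is to adapt Wall's original PL-chain argument from \cite{Wa69} to the perverse intersection homology setting, organized around Mayer--Vietoris sequences for $X=Y_1\cup_Z Y_2$ in the perversities $\bar p$, $\bar q$, and $\bar q/\bar p$. First I would establish, using subdivision adapted to the bicollar on $Z$, that there are short exact sequences of chain complexes
$$0\to I^{\bar r}C_*(Z;\Q)\to I^{\bar r}C_*(Y_1;\Q)\oplus I^{\bar r}C_*(Y_2;\Q)\to I^{\bar r}C_*(X;\Q)\to 0$$
for $\bar r\in\{\bar p,\bar q\}$, whence also for the quotient complex $I^{\bar q/\bar p}C_*$. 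This yields a three-row ladder of long exact sequences relating $Z,Y_1,Y_2,X$, which will provide the combinatorial framework for the remainder of the argument.

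Using this ladder, I would represent any class in $I^{\bar p\to\bar q}H_{2n}(X;\Q)$ by a $\bar p$-cycle $\xi=\xi_1+\xi_2$ with $\xi_i\in I^{\bar p}C_{2n}(Y_i;\Q)$, so that $\bd\xi_1=-\bd\xi_2$ is a $\bar p$-allowable $(2n-1)$-chain on $Z$. For a second such representative $\xi'=\xi_1'+\xi_2'$ in stratified general position, the chain pairing expands as
$$\epsilon\bigl[(\xi_1+\xi_2)\pfa(\xi_1'+\xi_2')\bigr]=\epsilon[\xi_1\pfa\xi_1']+\epsilon[\xi_2\pfa\xi_2']+\epsilon[\xi_1\pfa\xi_2']+\epsilon[\xi_2\pfa\xi_1'].$$
Using the bicollar to push the $\xi_i'$ into the interior of $Y_i$, the two cross terms can be made to vanish at the cost of introducing collar-correction cylinders $(\bd\xi_i')\times[0,1)$, whose intersections with the other chain reduce to intersection computations on $Z$. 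These reductions are precisely those encoded by the relative-perversity pairing $\Phi$ of Section \ref{S: pairing}. After the corrections, the first two terms compute the signature pairings on $I^{\bar p\onto\bar q}H_{2n}(Y_i,\bd Y_i;\Q)$, once $\xi_i$ is identified, via the ladder, with a class in $I^{\bar p\onto\bar q}H_{2n}(Y_i,\bd Y_i;\Q)$.

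The final step is to organize this chain-level computation into the asserted sum of three signatures. Chasing the Mayer--Vietoris ladder, I would exhibit a filtration of $I^{\bar p\to\bar q}H_{2n}(X;\Q)$ whose successive quotients, with respect to the form $\pfc$, are (up to orthogonal complements and metabolic radicals) $I^{\bar p\onto\bar q}H_{2n}(Y_1,\bd Y_1;\Q)$, $I^{\bar p\onto\bar q}H_{2n}(Y_2,\bd Y_2;\Q)$, and a ``Maslov subquotient''. The latter I would identify with Wall's quotient $\tfrac{A\cap(B+C)}{A\cap B+A\cap C}$ inside $V=I^{\bar q/\bar p}H_{2n}(Z;\Q)$: the subspaces $A$ and $C$ encode those boundaries $\bd\xi_i$ that can be killed in $Y_1$ and $Y_2$ respectively, while $B=\ker d$ enters because a class in $B$ lifts along $I^{\bar q}H_{2n}(Z;\Q)\to V$, which is exactly the condition for the corresponding correction chain to close up to a $\bar q$-cycle on $Z$. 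On the Maslov subquotient the induced form agrees with Wall's $\Psi$ of Section \ref{S: Wall1} (after the sign bookkeeping noted there), using the anti-symmetry of $\Phi$ from Lemma \ref{L: pairing}; its signature is $\sigma(V;A,B,C)$. Adding the three signatures yields the theorem.

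The hardest part will be the orientation and sign bookkeeping in the collar correction step, together with verifying that the ladder identifications genuinely produce the three subspaces $A,B,C$ as stated. The asymmetric roles of the two perversities make this more delicate than in the smooth case of \cite{Wa69}: the boundary of a $\bar p$-chain is $\bar p$-allowable while the ``interior'' lift of an element of $V$ is only $\bar q$-allowable, and this asymmetry is precisely why the correct symplectic space is $V=I^{\bar q/\bar p}H_{2n}(Z;\Q)$ rather than either $I^{\bar p}H_{2n-1}(Z;\Q)$ or $I^{\bar q}H_{2n-1}(Z;\Q)$. The Mayer--Vietoris step itself should be largely routine under the bicollar hypothesis, since the stratified-coefficient boundary conventions behave consistently in the three rows of the ladder once chains are placed in general position with respect to $X^{n-1}\cap Z$.
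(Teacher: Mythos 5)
Your outline follows the same broad strategy as the paper (which is Wall's): split the form on $I^{\bar p\to\bar q}H_{2n}(X;\Q)$ into the pieces carried by $Y_1$ and $Y_2$ plus a residual piece, and identify the residual form with Wall's $\Psi$ on a subquotient of $V=I^{\bar q/\bar p}H_{2n}(Z;\Q)$. But as written there are concrete problems. First, the proposed short exact sequence of chain complexes $0\to I^{\bar r}C_*(Z;\Q)\to I^{\bar r}C_*(Y_1;\Q)\oplus I^{\bar r}C_*(Y_2;\Q)\to I^{\bar r}C_*(X;\Q)\to 0$ is not exact: subdivision makes a chain simplicial in a triangulation containing $Z$ as a subcomplex, but the two halves of an allowable chain acquire new boundary lying in $Z$, and that boundary need not be $\bar r$-allowable, so the halves need not lie in $I^{\bar r}C_*(Y_i;\Q)$ and surjectivity fails; the obstruction is allowability, not subdivision. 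The same issue affects your claim that every class of $I^{\bar p\to\bar q}H_{2n}(X;\Q)$ is represented by $\xi_1+\xi_2$ with $\xi_i\in I^{\bar p}C_{2n}(Y_i;\Q)$ — this requires stratified general position of the representative relative to $Z$, and in fact the paper avoids needing it: it only uses the excision-type isomorphism $I^{\bar q}H_*(X,Z;\Q)\cong I^{\bar q}H_*(Y_1,Z;\Q)\oplus I^{\bar q}H_*(Y_2,Z;\Q)$ together with the $\bar q/\bar p$ long exact sequences of diagram \eqref{E: sequences}, splitting off the images of $I^{\bar p}H_{2n}(Y_i;\Q)$ and working with the orthogonal complement $K$.

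The more serious gap is that the heart of the theorem — that the residual subquotient, with its induced form, is isometric to $(W,\Psi)$ for the specific subspaces $A$, $B$, $C$ — is asserted rather than argued, and your heuristic for it is off in degree: $A$ and $C$ are kernels of $I^{\bar q/\bar p}H_{2n}(Z;\Q)\to I^{\bar q/\bar p}H_{2n}(Y_i;\Q)$, hence consist of degree-$2n$ relative-perversity classes on $Z$ that bound into $Y_i$ (images of $\delta$ from $I^{\bar q/\bar p}H_{2n+1}(Y_i,Z;\Q)$), not data about the $(2n-1)$-dimensional boundaries $\bd\xi_i$ of your splitting. In the paper this identification is the bulk of the proof: one shows the radical $S$ of the form on $K$ is $K\cap\im\bigl(I^{\bar p}H_{2n}(Y_1;\Q)\oplus I^{\bar p}H_{2n}(Y_2;\Q)\to I^{\bar p\to\bar q}H_{2n}(X;\Q)\bigr)$, proves $S^{\perp}=I^{\bar p\to\bar q}H_{2n}(X;\Q)\cap\im\bigl(I^{\bar q}H_{2n}(Z;\Q)\to I^{\bar q}H_{2n}(X;\Q)\bigr)$, constructs an explicit map $f:S^{\perp}\to W$ and verifies it is well defined, surjective, and has kernel exactly $S$, and separately proves that $A$, $B$, $C$ are $\Phi$-isotropic — without which $\sigma(V;A,B,C)$ is not even defined, and for $A$ and $C$ this needs a genuine geometric argument (a $1$-chain of the form $\xi\pfa v-u\pfa\eta$ in $Y_1$), not merely the anti-symmetry of Lemma \ref{L: pairing}. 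Finally, matching the residual pairing with $\Psi$ requires the collar computation you sketch, but carried out against these definitions and Wall's ordering and sign conventions from Section \ref{S: Wall1}. Until these steps are supplied, your proposal is a plan whose load-bearing steps are missing.
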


\begin{proof}[Proof of Theorem \ref{T: Wall}]
First we show that $I^{\bar p\to\bar q}H_{2n}(X;\Q)$ decomposes as a direct sum of 
$I^{\bar p\onto\bar q}H_{2n}(Y_1,Z;\Q)$, $I^{\bar p\onto\bar q}H_{2n}(Y_2,Z;\Q)$, and a third piece,
and that the signature pairing is diagonal with respect to this decomposition.
Consider   the morphisms (induced by inclusion)
\begin{align*}
I^{\bar p}H_{2n}(Y_1;\Q)\oplus I^{\bar p}H_{2n}(Y_2;\Q)&\to I^{\bar p}H_{2n}(X;\Q) \\
&\onto  I^{\bar p\to\bar q}H_{2n}(X;\Q)\\
&\into  I^{\bar q}H_{2n}(X;\Q)\\
&\to  I^{\bar q}H_{2n}(X,Z;\Q)\\
& \cong I^{\bar q}H_{2n}(Y_1,Z ;\Q)\oplus I^{\bar q}H_{2n}(Y_2,Z ;\Q).
\end{align*}
The image of the composition is $I^{\bar p\onto \bar q}H_{2n}(Y_1,Z;\Q)\oplus I^{\bar p\onto \bar q}H_{2n}(Y_2,Z;\Q)$, so we have an induced surjection
{\footnotesize
$$
\im(I^{\bar p} H_{2n}(Y_1;\Q)\oplus I^{\bar p} H_{2n}(Y_2;\Q)\to I^{\bar p\to \bar q} H_{2n}(X;\Q))
\to I^{\bar p\onto \bar q}H_{2n}(Y_1,Z;\Q)\oplus I^{\bar p\onto \bar q}H_{2n}(Y_2,Z;\Q).
$$}
Since all groups are really $\Q$-vector spaces,  there is a  (non-unique) splitting of this map.
 We claim that this  splitting is isometric in that it preserves the intersection pairing (where the intersection pairing on $I^{\bar p\onto \bar q}H_{2n}(Y_1,Z;\Q)\oplus I^{\bar p\onto \bar q}H_{2n}(Y_2,Z;\Q)$ is given by the orthogonal sum). 
Indeed, suppose that $[x]=[x_1]+[x_2]$ and $[y]=[y_1]+[y_2]$ are elements of $I^{\bar p\onto\bar q}H_{2n}(Y_1,Z;\Q)\oplus I^{\bar p\onto\bar q}H_{2n}(Y_2,Z;\Q)$ and that $[\td x]=[\td x_1]+[\td x_2]$ and $[\td y]=[\td y_1]+[\td y_2] \in \im(I^{\bar p} H_{2n}(Y_1;\Q)\oplus I^{\bar p} H_{2n}(Y_2;\Q)\to I^{\bar p\to \bar q} H_{2n}(X;\Q))$ are their images under a given splitting. Each $[\td x_i]$ and $[\td y_i]$ may be represented by $\bar p$-allowable cycles with support in the interior of $Y_i$, and the same cycles represent the $[x_i]$ and $[y_i]$. Furthermore, we can always assume that the representing cycles are in stratified general position. Then, by definition, (the augmentations of) both intersection pairings are given by counting the intersection numbers of the the representative chains, and it is clear that chains in $Y_1$ do not intersect those in $Y_2$. So $x\pfa y= x_1\pfa y_1+x_2\pfa y_2= \td x_1\pfa \td y_1+\td x_2\pfa \td y_2=\td x\pfa \td y$, and the pairing is preserved. 
Notice that if we choose a different splitting that, say, takes $[x]$ to $[\td x']$, then $[\td x-\td x']$ must map to $0$ in $I^{\bar q}H_{2n}(X,Z;\Q)$, i.e. it is $\bar q$-homologous in $X$ to a chain in $Z$. Such a chain clearly does not intersect any $\bar p$-allowable chain in the interior of either $Y_i$, and this explains why the choice of splitting does not affect the isometry type of the pairing.

Now, continuing with the proof of Theorem \ref{T: Wall}, we fix a splitting, and by an abuse of 
notation, let $I^{\bar p\onto\bar q}H_{2n}(Y_i,Z;\Q)$ also denote its image under the splitting 
in $I^{\bar p\to\bar q}H_{2n}(X;\Q)$.
It is geometrically clear that  a chain from  $I^{\bar p\onto\bar q}H_{2n}(Y_1,Z;\Q)$ does not intersect a chain from $I^{\bar p\onto\bar q}H_{2n}(Y_2,Z;\Q)$ in $I^{\bar p\to \bar q}H_{2n}(X;\Q)$. Thus $I^{\bar p\onto\bar q}H_{2n}(Y_1,Z;\Q)$ and $I^{\bar p\onto\bar q}H_{2n}(Y_2,Z;\Q)$ are orthogonal in $I^{\bar p\to\bar q}H_{2n}(X;\Q)$. We also know that the restriction of the intersection pairing to each subspace is  nonsingular, and it follows that they must intersect trivially (e.g. any $[x]\in I^{\bar p\onto\bar q}H_{2n}(Y_1,Z;\Q)$ annihilates $I^{\bar p\onto\bar q}H_{2n}(Y_2,Z;\Q)$, so if also $[x]\in I^{\bar p\onto\bar q}H_{2n}(Y_2,Z;\Q)$, then $[x]$ must be $0$ or nonsingularity of the restriction of the form to $I^{\bar p\onto\bar q}H_{2n}(Y_2,Z;\Q)$ would be violated). Together, the subspace $J=I^{\bar p\onto\bar q}H_{2n}(Y_1,Z;\Q)\oplus I^{\bar p\onto\bar q}H_{2n}(Y_2,Z;\Q)$ is thus an orthogonal sum, i.e. $J=I^{\bar p\onto\bar q}H_{2n}(Y_1,Z;\Q)\perp I^{\bar p\onto\bar q}H_{2n}(Y_2,Z;\Q)$, and the restriction of the pairing $\pfc$ to this subspace of $I^{\bar p\to \bar q}H_{2n}(X;\Q)$ is also nonsingular. 

Let $K$ be the annihilator of $J$ in $I^{\bar p\to \bar q}H_{2n}(X;\Q)$, i.e. $K=J^{\perp}$. Once again, the nonsingularity of the pairing on $J$ implies $J\cap K=0$, and in fact, it follows from basic linear algebra (see, e.g., \cite[Theorem 3.1]{MH}) that $I^{\bar p\to \bar q}H_{2n}(X;\Q)\cong J\perp J^{\perp}=J\perp K$.
Thus, choosing an appropriate basis, we can write $I^{\bar p\to\bar q}H_{2n}(X;\Q)$ as a direct sum with respect to which the intersection pairing is block diagonal, as desired: 
\begin{equation}\label{E: decomp}
I^{\bar p\to\bar q}H_{2n}(X;\Q)\cong I^{\bar p\onto\bar q}H_{2n}(Y_1,Z;\Q)\perp I^{\bar p\onto\bar q}H_{2n}(Y_2,Z;\Q)\perp K.
\end{equation}

It follows that $$\sigma_{\bar p\to\bar q}(X)= \sigma_{\bar p\onto\bar q}(Y_1)+\sigma_{\bar p\onto\bar q}(Y_2)+\sigma(K),
$$
so we must show that $\sigma(K)=\sigma(V;A,B,C)$. To do this, we will first decompose $K$ as a direct sum
$L \oplus S \oplus M$ and show that $\sigma(K) = \sigma(L)$.  Then we will show that we can
identify $\sigma(L)$ with the desired Maslov triple index.

Let $$S=K\cap \im(I^{\bar p}H_{2n}(Y_1;\Q)\oplus I^{\bar p}H_{2n}(Y_2;\Q)\to I^{\bar p\to \bar q}H_{2n}(X;\Q)),$$ and let $S^{\perp}$ denote the annihilator of $S$ in $K$ under $\pfc$.

\begin{remark}\label{R: kill} Note that $S^{\perp}$ is therefore also the annihilator of $$I^{\bar p\onto\bar q}H_{2n}(Y_1,Z;\Q)\perp I^{\bar p\onto\bar q}H_{2n}(Y_2,Z;\Q)\perp S$$ in $I^{\bar p\to\bar q}H_{2n}(X;\Q)$.
\end{remark}

\begin{lemma}
Under the nonsingular pairing $\pfb: I^{\bar p}H_{2n}(X;\Q)\otimes I^{\bar q}H_{2n}(X;\Q)\to \Q$,
the annihilator of  $$\im(I^{\bar p}H_{2n}(Y_1;\Q)\oplus I^{\bar p}H_{2n}(Y_1;\Q)\to I^{\bar p}H_{2n}(X;\Q))$$  is $$\im(I^{\bar q}H_{2n}(Z;\Q)\to I^{\bar q}H_{2n}(X;\Q)).$$
\end{lemma}
\begin{proof}
The orthogonality of the two spaces is geometrically evident as $\bar p$-allowable chains in $Y_1$ or $Y_2$ can be assumed to have support in the interior of the space (i.e. away from the stratified boundary $Z$), due to the bicollar on $Z$.

On the other hand, suppose $[x]$ is in $I^{\bar q}H_{2n}(X;\Q)$ but not in $\im(I^{\bar q}H_{2n}(Z;\Q)\to I^{\bar q}H_{2n}(X;\Q))$. Then, by the long exact sequence of the pair, $[x]$ has a nontrivial image in $I^{\bar q}H_{2n}(X,Z;\Q)\cong I^{\bar q}H_{2n}(Y_1,Z;\Q)\oplus I^{\bar q}H_{2n}(Y_2,Z;\Q)$. But since $I^{\bar q}H_{2n}(Y_i,Z;\Q)$ is dual to $I^{\bar p}H_{2n}(Y_i;\Q)$, this implies there must be some $[y]\in I^{\bar p}H_{2n}(Y_i;\Q)$ for $i=0$ or $i=1$ such that $y$ and $x$ intersect with non-zero intersection number. Therefore $[x]$ cannot be orthogonal to $\im(I^{\bar p}H_{2n}(Y_1;\Q)\oplus I^{\bar p}H_{2n}(Y_1;\Q)\to I^{\bar p}H_{2n}(X;\Q))$. 

Thus the annihilator of  $\im(I^{\bar p}H_{2n}(Y_1;\Q)\oplus I^{\bar p}H_{2n}(Y_1;\Q)\to I^{\bar p}H_{2n}(X;\Q))$ must be exactly $\im(I^{\bar q}H_{2n}(Z;\Q)\to I^{\bar q}H_{2n}(X;\Q))$.
\end{proof}

\begin{corollary}\label{C: S perp}
$S^{\perp}=I^{\bar p\to \bar q}H_{2n}(X;\Q)\cap \im(I^{\bar q}H_{2n}(Z;\Q)\to I^{\bar q}H_{2n}(X;\Q))$.
\end{corollary}
\begin{proof}
First, observe that any $[x]\in I^{\bar p\to \bar q}H_{2n}(X;\Q)\cap \im(I^{\bar q}H_{2n}(Z;\Q)\to I^{\bar q}H_{2n}(X;\Q))$ is in $K$ because any cycle that has a $\bar q$-allowable representative with support in $Z$ must have trivial intersection with any $\bar p$-allowable chain with support in the interior of $Y_1$ or $Y_2$, and any chain in either $I^{\bar p\onto\bar q}H_{2n}(Y_i,Z;\Q)$ can be so represented. For the same reason, $[x]\in S^{\perp}$ because any element of $S$ can be so represented. Thus 
$$I^{\bar p\to \bar q}H_{2n}(X;\Q)\cap \im(I^{\bar q}H_{2n}(Z;\Q)\to I^{\bar q}H_{2n}(X;\Q))\subset S^{\perp}.$$

On the other hand, anything in $S^{\perp}$ lies in $K\subset I^{\bar p\to \bar q}H_{2n}(X;\Q)$ by definition, and so by the preceding lemma, to complete the proof we need only show that $S^{\perp}$ annihilates $\im(I^{\bar p}H_{2n}(Y_1;\Q)\oplus I^{\bar p}H_{2n}(Y_1;\Q)\to I^{\bar p}H_{2n}(X;\Q))$ under the pairing $\pfb: I^{\bar p}H_{2n}(X;\Q)\otimes I^{\bar q}H_{2n}(X;\Q)\to \Q$. 
Suppose $[x]\in S^{\perp}\subset I^{\bar q}H_{2n}(X;\Q_0)$,  $[y]\in  \im(I^{\bar p}H_{2n}(Y_1;\Q)\oplus I^{\bar p}H_{2n}(Y_2;\Q)\to I^{\bar p}H_{2n}(X;\Q))$, and $[y]\pfb [x]\neq 0$. Since  $[x]\in  I^{\bar p\to \bar q}H_{2n}(X;\Q)$, we can choose a $\bar p$-allowable representative for $[x]$, and we can think of $y$ as $\bar q$-allowable, obtaining the well-defined equalities $[x]\pfb[y]=\epsilon[x\pfa y]=\pm\epsilon[y\pfa x]=\pm[y]\pfb[x]\neq 0$.  But now in the expression $[x]\pfb[y]$,  the cycle $[y]$ is representing an element of $\im(I^{\bar p}H_{2n}(Y_1;\Q)\oplus I^{\bar p}H_{2n}(Y_1;\Q)\to I^{\bar p\to \bar q}H_{2n}(X;\Q))$, which we claim is a subset of $I^{\bar p\onto\bar q}H_{2n}(Y_1,Z;\Q)\perp I^{\bar p\onto\bar q}H_{2n}(Y_2,Z;\Q)\perp S$. This would  contradict the 
assumption that $[x]\in S^{\perp}$ by Remark \ref{R: kill}, and so $S^{\perp}$ would indeed annihilate $\im(I^{\bar p}H_{2n}(Y_1;\Q)\oplus I^{\bar p}H_{2n}(Y_1;\Q)\to I^{\bar p}H_{2n}(X;\Q))$.

To verify the claim, suppose $[y]\in im(I^{\bar p}H_{2n}(Y_1;\Q)\oplus I^{\bar p}H_{2n}(Y_1;\Q)\to I^{\bar p\to \bar q}H_{2n}(X;\Q))$. We can write $[y]=[y_1+y_2]$, where $y_i$ is a $\bar p$-allowable cycle in $Y_i$. Furthermore, using the decomposition formula \eqref{E: decomp}, we can write $[y]=[z_1+z_2+z_3]$, where $z_i$ is a $\bar p$-allowable cycle in  $Y_i$ and $[z_3]\in K$. But then $[z_3]=[y_1-z_1+y_2-z_2]$, and since each $y_i-z_i$ is a $\bar p$-allowable cycle in $Y_i$, it follows that $[z_3]\in S$ by the definition of $S$. Thus the decomposition  $[y]=[z_1]+[z_2]+[z_3]$ provides the desired conclusion. 
\end{proof}

\begin{lemma}
$S\subset S^{\perp}$.
\end{lemma}
\begin{proof}
$S\subset K\subset I^{\bar p\to\bar q}H_{2n}(X;\Q)$ by definition, so it suffices, by the preceding corollary, to show that $S\subset \im(I^{\bar q}H_{2n}(Z;\Q)\to I^{\bar q}H_{2n}(X;\Q))$.

Recall that $S$ is the intersection of the image of $I^{\bar p}H_{2n}(Y_1;\Q)\oplus I^{\bar p}H_{2n}(Y_2;\Q)$ in $I^{\bar p\to \bar q}H_{2n}(X;\Q)$ with $K$, which is the annihilator and additive complement of $I^{\bar p\onto \bar q}H_{2n}(Y_1,Z;\Q)\oplus I^{\bar p\onto \bar q}H_{2n}(Y_2,Z;\Q)$. Thus if $[x]\in S$, then $x$ can be written as the sum of two $\bar p$-allowable cycles, one each in $Y_1$ and $Y_2$. These cycles have well-defined images respectively in $I^{\bar p\onto\bar q}H_{2n}(Y_1,Z;\Q)\subset I^{\bar q}H_{2n}(Y_1,Z;\Q)\cong  I^{\bar q}H_{2n}(X,Y_2;\Q)$ and $I^{\bar p\onto\bar q}H_{2n}(Y_2,Z;\Q)\subset I^{\bar q}H_{2n}(Y_2,Z;\Q)\cong  I^{\bar q}H_{2n}(X,Y_1;\Q)$. But each of these images must be $0$ since the pairing on each $I^{\bar p\onto\bar q}H_{2n}(Y_i,Z;\Q)$
is nonsingular and $[x]$ is orthogonal to everything in these spaces. Thus $[x]$ must be $0$ in $I^{\bar q}H_{2n}(Y_1,Z;\Q)\oplus I^{\bar q}H_{2n}(Y_2,Z;\Q)\cong I^{\bar q}H_{2n}(X,Z;\Q)$. Therefore
by the relative sequence, $[x]$ must be in the image of $I^{\bar q}H_{2n}(Z;\Q)$. 
\end{proof}

We can now proceed as in Wall \cite{Wa69}:

Since the intersection form restricted to $K$ is nonsingular, we have\footnote{Clearly $S\subset (S^{\perp})^{\perp}$, and then we can apply dimension counting ($\dim(S^{\perp})=\dim(K)-\dim(S)$) since $K$ is finite-dimensional.} $(S^{\perp})^{\perp}=S$. Therefore the radical of the restriction of the intersection form to $S^{\perp}$ is $S$. This implies that the form is nonsingular when restricted to any additive complement
$L$ of $S$ in $S^{\perp}$. We can thus complete the multiplication table for the intersection pairing on $K$ as follows:

\begin{center}
\begin{tabular}{r|ccc}
&$L$&$S$&$M$\\ \hline
$L$&$A$&$0$&$0$\\
$S$&$0$&$0$&$B$  \\
$M$&$0$&$B^t$&$D$
\end{tabular}
\end{center} 

Here $L\oplus S=S^{\perp}$, $M$ is an additive complement of $S^{\perp}$, chosen so that  $L$ and $M$ are mutually annihilating under $\pfc$. This can be done by an appropriate change of basis if necessary, because the pairing is nonsingular on $L$. The letters $A,B,D$ represent matrices of intersection numbers, and $B^t$ is the transpose of $B$. It appears because the intersection pairing is symmetric. Furthermore, the form on $S\oplus M$ must be nonsingular, because the entire pairing on $K$ is nonsingular, and the annihilator of $S$ in $S\oplus M$ must be $(S\oplus M)\cap S^{\perp}=S$. So $S$ is self-annihilating on $S\oplus M$, which implies that the signature of the pairing on $S\oplus M$ is\footnote{It is a standard fact about nonsingular bilinear forms that their signatures are $0$ if they possess a subspace $U$ such that $U=U^{\perp}$, but it is harder than expected to find a clear, concise proof in the expository topology literature. Thus we include a brief proof here, owing largely to the treatment in \cite{BW1}.

Suppose we have a nonsingular symmetric form $(\cdot,\cdot)$ on the finite dimensional vector space $V$. Let $V_+, V_-$ be the maximal positive definite, respectively negative definite, subspaces of $V$. Then by definition, the signature of the form is $\sigma=\dim(V_+)-\dim (V_-)$. Let $U$ be a subspace such that $U^{\perp}=U$. Since $\dim(U)+\dim(U^\perp)=\dim(V)$, $\dim(U)=\frac{1}{2}\dim(V)$. 
Clearly also $U\cap V_+=U\cap V_-=0$. But then $\dim(V)\geq \dim(U)+\dim(V_+)-\dim(U\cap V_+)=\dim(U)+\dim(V_+)$ and $\dim(V)\geq \dim(U)+\dim(V_-)-\dim(U\cap V_-)=\dim(U)+\dim(V_-)$ by the inclusion/exclusion formula. Thus $\frac{1}{2}\dim(V)\geq \dim(V_+)$ and $\frac{1}{2}\dim(V)\geq \dim(V_-)$. But by diagonalizing the form, it follows easily from non-singularity that $\dim(V_+)+\dim(V_-)=\dim(V)$. This forces $\dim(V_+)=\dim(V_-)=\frac{1}{2}\dim(V)$. Thus the signature must be $0$.} $0$. 
It readily follows that $\sigma(K)=\sigma(L)$. 

Finally, we want to identify $\sigma(L)$ with the indicated Maslov triple index.  To do this, we first
will identify $L$ with a space $W$ defined using the spaces $A$, $B$, and $C$ that occur in the 
Maslov index.
For this part of the proof, will refer to the 
following commutative diagram of long exact sequences derived from the diagram \eqref{E: sequences}.
\begin{diagram}
&\rTo &I^{\bar p}H_{2n+1}(X,Z; \Q) &\rTo^{\delta}& I^{\bar p}H_{2n}(Z; \Q)&\rTo^{(i_{Z \subset X})_*}& I^{\bar p}H_{2n}(X; \Q)&\rTo &\\
&&\dTo^{(i_{\bar p\to\bar q})_*}&&\dTo^{(i_{\bar p\to\bar q})_*}&&\dTo^{(i_{\bar p\to\bar q})_*}&\\
&\rTo &I^{\bar q}H_{2n+1}(X,Z; \Q) &\rTo^{\delta}& I^{\bar q}H_{2n}(Z; \Q)&\rTo^{(i_{Z \subset X})_*}& I^{\bar q}H_{2n}(X; \Q)&\rTo &\\
&&\dTo^{(\pi_{\bar q/\bar p})_*}&&\dTo^{(\pi_{\bar q/\bar p})_*}&&\dTo^{(\pi_{\bar q/\bar p})_*}&\\
&\rTo &I^{\bar q/\bar p}H_{2n+1}(X,Z; \Q) &\rTo^{\delta}& I^{\bar q/\bar p}H_{2n}(Z; \Q)&\rTo^{(i_{Z \subset X})_*}& I^{\bar q/\bar p}H_{2n}(X; \Q)&\rTo & \\
&&\dTo^{d}&&\dTo^{d}&&\dTo^{d}&\\
&\rTo &I^{\bar p}H_{2n}(X,Z; \Q) &\rTo^{\delta}& I^{\bar p}H_{2n-1}(Z; \Q)&\rTo^{(i_{Z \subset X})_*}& I^{\bar p}H_{2n-1}(X; \Q)&\rTo&.
\end{diagram}
Recall again that the ``connecting map'' $d$ can  be described as acting on a representative chain $x$ by taking $x$ to its boundary $\bd x$, that  an element of $I^{\bar q/\bar p}H_*(X,Y;G))$ can be represented by a chain $x$ such that $\bd x=a+b$ with $b$ a $\bar q$-allowable chain in $Y$ and $a$ a $\bar p$-allowable chain in $X$, and, in this case, $\delta x$ is represented by $b$.

As above,
let 
\begin{equation}
A=\ker((i_{Z \subset Y_1})_*:I^{\bar q/\bar p}H_{2n}(Z;\Q)\to I^{\bar q/\bar p}H_{2n}(Y_1;\Q)),
\end{equation}
\begin{equation}
C=\ker((i_{Z \subset Y_2})_*:I^{\bar q/\bar p}H_{2n}(Z;\Q)\to I^{\bar q/\bar p}H_{2n}(Y_2;\Q)),\end{equation}
and
\begin{equation}
B=\ker(d: I^{\bar q/\bar p}H_{2n}(Z;\Q)\to I^{\bar p}H_{2n-1}(Z;\Q)),
\end{equation}
and define
\begin{equation}
W=\frac{B\cap (C+A)}{B\cap C+B\cap A}.
\end{equation}
The reader may want to refer back to Section \ref{S: Wall1} to recall how $W$ comes into Wall's Maslov index.

We seek to define a map $f: S^{\perp}\to W$. To begin, 
 for $[z_1] \in S^\perp$, define an element $\tilde{f}([z_1])\in I^{\bar q/\bar p}H_{2n}(Z;\Q)$ as follows.
If $[z_1] \in S^{\perp} \cong I^{\bar p \to \bar q}H_{2n}(X;\Q) \cap \im(I^{\bar q}H_{2n}(Z;\Q) \rightarrow I^{\bar q}H_{2n}(X;\Q))$, then $[z_1]$ is an element of $I^{\bar q}H_{2n}(X;\Q)$, and
we can lift $[z_1]$ (non-uniquely) to $[z_2]\in I^{\bar q}H_{2n}(Z;\Q)$. Let
$\tilde{f}([z_1]) = (\pi_{\bar q/ \bar p})_*([z_2])\in I^{\bar q/\bar p}H_{2n}(Z;\Q)$. 

\begin{proposition} For every $[z_1]\in S^\perp$, 
$\tilde{f}([z_1]) \in B\cap (C+A)$.  Further, up to elements of $B\cap A + B\cap C$,
$\tilde{f}([z_1])$ is independent of 
the choice of $[z_2]$ made in the definition, 
so $\tilde{f}$ defines a map $f:S^\perp \rightarrow W$.  
The map $f$ is a homomorphism.
\end{proposition}

\begin{proof}
We begin by demonstrating that  $\tilde{f}([z_1])\in B\cap (C+A)$. 
First, $\tilde{f}([z_1])$ is by construction in $\im((\pi_{\bar q/ \bar p})_*)$, so by exactness of the $\bar q/\bar p$ sequence, it lies in $B$. 
To see that $\tilde{f}([z_1])$ also lies in $A+C$, we note that 
$$
A\cong \im(\delta:I^{\bar q/\bar p}H_{2n+1}(Y_1,Z;\Q)\to I^{\bar q/\bar p}H_{2n}(Z;\Q))
$$
 and 
 $$
 C\cong \im(\delta:I^{\bar q/\bar p}H_{2n+1}(Y_2,Z;\Q)\to I^{\bar q/\bar p}H_{2n}(Z;\Q))
 $$ 
 by the long exact sequence \eqref{E: LES}. So
\begin{multline}A+C=\im(\delta+\delta: I^{\bar q/\bar p}H_{2n+1}(Y_1,Z;\Q)\oplus I^{\bar q/\bar p}H_{2n+1}(Y_2,Z;\Q)\to I^{\bar q/\bar p}H_{2n}(Z;\Q))\\
\cong \im(\delta: I^{\bar q/\bar p}H_{2n+1}(X,Z;\Q)\to I^{\bar q/\bar p}H_{2n}(Z;\Q)).
\end{multline}
Now let us go through the definition of $\tilde{f}$ again carefully, referring to the diagram above.  
If $[z_1]\in S^{\perp}$, then by Corollary \ref{C: S perp}, $z_1$ can be represented by a $\bar p$-allowable cycle, $a$, in $X$ (representing a class in $I^{\bar p}H_{2n}(X;\Q)$) that is $\bar q$-homologous to a $\bar q$-allowable cycle, $b$, in $Z$ which
represents the class $[z_2]$ in the definition of $\tilde{f}$. So $(i_{\bar q/ \bar p})_*([a]) =
(i_{Z \subset X})_*([b])$ in $I^{\bar q}H_{2n}(X;\Q)$.
Let $\xi$ be a $\bar q$-allowable $2n+1$ chain in $X$ realizing such a homology, i.e. $\bd \xi=b-a$. Since $a$ is $\bar p$-allowable in $X$ and $b$ is $\bar q$-allowable in $Z$, $\xi$ represents  an element of  $I^{\bar q/\bar p}H_{2n+1}(X,Z;\Q)$.  By definition of $\delta$, the  image $\delta(\xi)$ in $I^{\bar q/\bar p}H_{2n}(Z;\Q)$ is also represented by $b$, and so is $(\pi_{\bar q/\bar p})_*[b]=\tilde{f}([z_1])$. Thus $\tilde{f}([z_1])\in \im(\delta) = A+C$.

Next, suppose that $[z_2],[z_2']\in I^{\bar q}H_{2n}(Z;\Q)$ are two choices of lifts for the same $[z_1]\in S^{\perp}\subset \im(I^{\bar q}H_{2n}(Z;\Q)\to I^{\bar q}H_{2n}(X;\Q))$, i.e., $(i_{Z\subset X})_*([z_2])=(i_{Z\subset X})_*([z_2'])=[z_1]$.  Let $\tilde{f}([z_1]):=(\pi_{\bar q/\bar p})([z_2])$ and $\tilde{f}'([z_1]):=(\pi_{\bar q/\bar p})([z'_2])$.
We need to show that 
$\tilde{f}([z_1]) - \tilde{f}'([z_1]) \in B\cap A + B\cap C$.

We have 
\begin{align*}
[z_2]-[z_2']&\in \im(\delta:I^{\bar q}H_{2n+1}(X,Z;\Q)\to I^{\bar q}H_{2n}(Z;\Q))\\
&\cong \im(\delta + \delta :I^{\bar q}H_{2n+1}(Y_1,Z;\Q)\oplus I^{\bar q}H_{2n+1}(Y_2,Z;\Q)\to I^{\bar q}H_{2n}(Z;\Q)).
\end{align*}
Let $[x_1], [x_2]$ be elements, respectively, of $I^{\bar q}H_{2n+1}(Y_1,Z;\Q)$, $I^{\bar q}H_{2n+1}(Y_2,Z;\Q)$ such that 
$\delta [x_1]+ \delta [x_2]=[z_2]-[z_2']$. Then $\delta \circ (\pi_{\bar q/\bar p})_* ([x_1]) \in B\cap A$ and 
$\delta \circ (\pi_{\bar q/\bar p})_* ([x_2])\in B\cap C$. So 
$(\pi_{\bar q/\bar p})([z_2] - [z_2']) =(\pi_{\bar q/\bar p}) \circ \delta ([x_1] +[x_2]) \in B\cap A + B\cap C$
as required.

Putting our arguments so far together, we see that $f$ is well-defined as a function from $S^{\perp}$ to $W$. But $f$ is then also a homomorphism since for a sum $[z_1]+[z_1']$, we can certainly find a lift of the form $[z_2]+[z_2']$, we have just shown that this choice is acceptable and does not affect the image, and the other maps are all homomorphisms. 
\end{proof}

Now we can let $f([z_1]) = [z_3]$, where by an abuse of notation, $[z_3]$ is also taken
to be the class that $z_3$ represents in $W$ as $\tilde{f}([z_1])$.

\begin{proposition}\label{P: L=W}
The map $f$ surjects onto $W$ with kernel
$S$, hence $L\cong W$.
\end{proposition}
\begin{figure}[!htp]
\begin{center}
\scalebox{.4}{\includegraphics{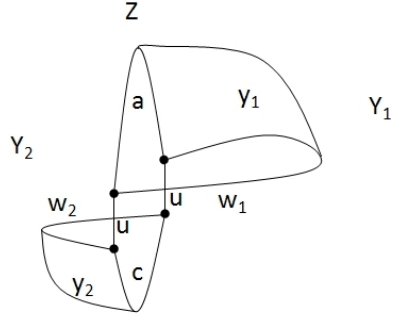}}
\caption{A schematic for the argument that $f$ is surjective.}\label{F: fig1}
\end{center}
\end{figure}
\begin{proof}
First we show that $f:S^{\perp}\to W$ is surjective. Suppose $[x]$ is a class in $B\cap (A+C)$. 
Since $[x] \in B$, there exists an $[x_2] \in I^{\bar q}H_{2n}(Z; \Q)$ with
$(\pi_{\bar q/\bar p})_*[x_2]=[x]$.  To get surjectivity of $f$, we need to show there is a 
$[v] \in I^{\bar p}H_{2n}(X;\Q)$ such that $(i_{\bar p\to\bar q})_*[v] = (i_{Z \subset X})_*[x_2]:=[x_1]$.
Then $f[x_1]=[x]$.

It might aid the reader to refer to the schematic in Diagram \eqref{F: fig1} during the following argument.

Since $[x] \in A +C$, we can write $[x]=[a]+[c]$, where $[a]\in A$ and $[c]\in C$. Since $[a]\in A$, it is
the image under $\delta$ of some $[y_1]$, which may be represented by a $\bar q$-allowable chain $y_1$ with support in $Y_1$ such that $\bd y_1=a+w_1$ and $w_1$ is a $\bar p$-allowable relative
$2n$ chain in $Y_1$. Similarly, there is a $\bar q$-allowable chain $y_2$ with support in $Y_2$ such that $\bd y_2=c+w_2$ and $w_2$ is a $\bar p$-allowable $2n$ chain in $Y_2$. 
Then $d[y_1]=[w_1]$ and $d[y_2]=[w_2]$.
Since $x$ is also in $B$, $d[x] = [\bd x] =[0] \in I^{\bar p}H_{2n-1}(Z;\Q)$, so
there is a $\bar p$-allowable chain $u$ in $Z$ such that $\bd u=\bd x=\bd a+\bd c$. 
Now consider the $\bar p$-allowable $2n$-chain $v=w_1+u+w_2$. We have 
\begin{align*}
\bd (w_1+u+w_2)&= \bd w_1+\bd u+\bd w_2\\
&=-\bd a + \bd a+\bd c -\bd c\\
&=0,
\end{align*}
so $v$ represents a class $[v] \in I^{\bar p}H_{2n}(X;\Q)$, and 
\begin{align*}
\bd (y_1+y_2) &= \bd y_1+\bd y_2\\
&=a+w_1+c+w_2\\
&=a+c-u +w_1+u+w_2.
\end{align*}
In other words, $y_1+y_2$ provides a $\bar q$-allowable homology from the $\bar p$-allowable cycle $-(w_1+u+w_2)$ in $X$ to the $\bar q$-allowable cycle $a+c-u$ in $Z$.  This means that
$(i_{\bar p\to\bar q})_*[v] = (i_{Z \subset X})_*[a+c-u]$, so we can set $[x_2] = [a+c-u]$
and we get that $[x_1] = (i_{Z \subset X})_*[a+c-u] \in S^\perp$ by Corollary \ref{C: S perp}.
Finally, since $u$ is $\bar p$-allowable, we have that $(\pi_{\bar q/\bar p})_*[x_2]=(\pi_{\bar q/\bar p})_*[a+c-u]=[a+c]=[x]$ as desired.

Lastly, we must show that $\ker f=S$, which will suffice, as $L$ is an additive complement of $S$ in $S^{\perp}$. 
Suppose $[x]\in S= K\cap \im(I^{\bar p}H_{2n}(Y_1;\Q)\oplus I^{\bar p}H_{2n}(Y_2;\Q)\to I^{\bar p\to \bar q}H_{2n}(X;\Q))$.  Then we can write $[x]=(i_{\bar p\to\bar q})_*[x_1+x_2]$, where each $x_i$  is a $\bar p$-allowable cycle with support in $Y_i$. Since $[x]\in K$, we get $(\pi_{X,Z})_*[x] =[0] \in I^{\bar q}H_{2n}(Y_1,Z;\Q)\oplus I^{\bar q}H_{2n}(Y_2,Z;\Q)\cong I^{\bar q}H_{2n}(X,Z;\Q)$ (or else its image would be nontrivial in $I^{\bar p\onto \bar q}H_{2n}(Y_1,Z;\Q)\oplus I^{\bar p\onto \bar q}H_{2n}(Y_2,Z;\Q)$ and thus it could not be orthogonal to this group, on which the intersection pairing is nonsingular). So 
the cycle $x_1+x_2$ which we can take to represent $[x]$ is $\bar q$-homologous to a cycle in $Z$. 
But using that $I^{\bar q}H_{2n}(X,Z;\Q)\cong I^{\bar q}H_{2n}(Y_1,Z;\Q)\oplus I^{\bar q}H_{2n}(Y_2,Z;\Q)$, it follows that each of $x_1$ and $x_2$ is individually $\bar q$-homologous to a $\bar q$-allowable cycle in $Z$. Thus $[x_1]$ and $[x_2]$ are each individually elements of $K$ and so are individually elements of $S$. We claim that $f([x_1])\in B\cap A$ and $f([x_2])\in B\cap C$; the proofs are the same so we will just show the first. Let $y_1$ be a $\bar q$-homology in $Y_1$ from $x_1$ to a $\bar q$-chain $x_1'$ with support in $Z$. Then $x_1'$ represents $f([x_1])$, and it is clear that $f([x_1])\in B$ since $x_1'$ is a cycle. But it is also clear that $[x_1']\in A$, since $\bd y_1= x_1'-x_1$, which is the sum of a $\bar q$-allowable
chain on $Z$ and a $\bar p$-allowable chain on $Y_1$, thus $y_1$ is a cycle in 
$I^{\bar q/\bar p}H_{2n+1}(X,Z;\Q)$ and $\delta[y_1]= [x_1']$.
It follows now that $f(S)=0\in W$.

Conversely, suppose $[x]\in S^{\perp}$  and that $f([x])=0\in W$, i.e. $f([x])\in B\cap A+B\cap C$. We will show that $[x]\in S$; the reader may want to refer to Diagram \eqref{F: fig} for a schematic of the construction.
Since $[x]\in S^{\perp}$, we can assume by Corollary \ref{C: S perp} that $[x]$ is represented by a $\bar q$-allowable cycle $x$ supported in $Z$, and this same chain represents $f([x])$. Since $f([x])\in B\cap A+B\cap C\subset I^{\bar q/\bar p}H_{2n}(Z;\Q)$, there is a $\bar q$-allowable chain $z$ supported in $Z$ such that $\bd z=x-(x_1+x_2)-u$, where $[x_1]\in B\cap A$, $[x_2]\in B\cap C$, and $u$ is $\bar p$ allowable in $Z$. Note that $x_1$ and $x_1+u$ represent the same element of $B\cap A$, so we can represent $f([x])$ as $[x_1+u]+[x_2]$ in $B\cap A+B\cap C$.  
Since $[x_1+u]\in B$, $0=d[x_1+u]=[\bd(x_1+u)] \in I^{\bar p}H_{2n-1}(Z;\Q)$, so
there is a $2n$-dimensional $\bar p$-chain $w$ in $Z$ such that $\bd w=\bd (x_1+u)$, 
which are both $\bar p$-allowable. Notice also that $x_1+u-w$ is a cycle in the usual sense (its boundary is identically $0$), and $u-w$ is $\bar p$-allowable, so $[x_1+u-w]=[x_1] \in B\cap A$. 
Because this class is also in $A$, there is a $2n+1$ dimensional $\bar q$-chain $y_1$ in $Y_1$ such that $\bd y_1=x_1+u-w-p_1$, where $p_1$ is $\bar p$-allowable. Notice that $0=\bd \bd y_1=\bd (x_1+u-w)+\bd p_1=\bd p_1$, so $\bd p_1=0$, and $p_1$ is a cycle in $Y_1$. But now we observe that $[p_1]$ is in $S$: it is represented by a $\bar p$-cycle in $Y_1$, and since it is $\bar q$-homologous by $y_1$ to a cycle supported in $Z$, it is orthogonal to $I^{\bar p\onto \bar q}H_{2n}(Y_1,Z)\oplus I^{\bar p\onto \bar q}H_{2n}(Y_2,Z)$. 
Now observe that $\bd\bd z=0$ and $\bd x=0$, so that $\bd (x_1+u)=-\bd x_2$.  Thus by 
a similar argument, there is a $[p_2]\in S$ represented by a cycle $p_2$ in $Y_2$ that is $\bar q$-homologous by some $y_2$ to the cycle $x_2+w$. Putting these together, $p_1+p_2$ is $\bar q$-homologous to $x_1+u-w+x_2+w=x_1+u+x_2$. But we have already seen that $x_1+u+x_2$ is $\bar q$-homologous to $x$, and so $[p_1+p_2]=[x] \in I^{\bar q}H_{2n}(X;\Q)$. Thus $[x]\in S$.

\begin{figure}[!htp]
\begin{center}
\scalebox{.4}{\includegraphics{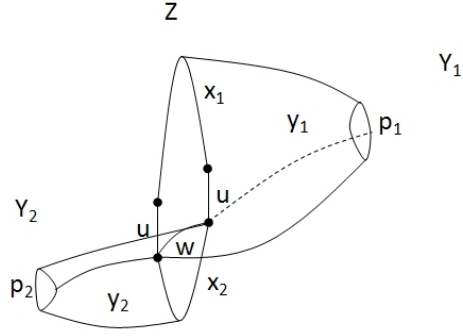}}
\caption{A schematic for the argument that $\ker(f)\subset S$.}\label{F: fig}
\end{center}
\end{figure}

\end{proof}

Finally, we must show that, under our isomorphism $L\cong W$, the signature of $L$ becomes the  Maslov index $\sigma(V;A,B,C)$ associated with the pairing $\Phi$ on $V=I^{\bar q/\bar p}H_{2n}(Z;\Q)$. For this Maslov triple index to make sense, we need the spaces $A$, $B$, and $C$ to be
self-annihilating  subspaces of $V$ under the pairing $\Phi([x],[y]):=\epsilon[ x \pfa \bd y + (-1)^{n-|x|}(\bd x) \pfa y]$, so
we need the following lemma.

\begin{lemma}
$\Phi(A\times A)=\Phi(B\times B)=\Phi(C\times C)=0$. 
\end{lemma}
\begin{proof}
It is clear that $\Phi(B\times B)=0$, for if $[x]\in B$, then $[x]\in im(I^{\bar q}H_{i}(Z;\Q) \to I^{\bar q/\bar p}H_{i}(Z;\Q))$. So $[x]$ can be represented as a $\bar q$-cycle, i.e. $\bd x=0$. Thus if $[x],[y]\in B$, certainly $\Phi([x],[y])=0$ from the definition. 

Now suppose $[x],[y] \in A$ are represented by $\bar q$-allowable chains $x,y$ in stratified general position in $Z$. 
The fact that $[x],[y]\in A$ means that there exist  $\bar q$-allowable $2n+1$-chains $\xi,\eta$ in 
$Y_1$ such that 
$\bd \xi=x+u$, $\bd \eta=y+v$, and $u,v$ are $\bar p$-allowable chains in $Y_1$. 
We can assume that $\xi$ and $\eta$ are in stratified general position rel $Z$ and  that in a collared neighborhood of $Z$, $\xi$ looks like $[0,1]\times x$ and $\eta$ looks like $[0,1]\times y$. Consider the chain $\xi\pfa v-u\pfa \eta$ in $Y_1$. Since $u$ and $v$ are $\bar p$-allowable and $\eta$ and $\xi$ are $\bar q$-allowable, this is a well-defined $\bar t$-allowable $1$-chain. Next we compute, using $\pfa_{Y_1}$ to denote intersection numbers in $Y_1$ and $\pfa_Z$ to denote those in $Z$:

\begin{align*}
\bd (\xi\pfa_{Y_1} v- u\pfa_{Y_1} \eta)& = (\bd \xi)\pfa_{Y_1} v+(-1)^{4n-|\xi|}\xi\pfa_{Y_1} \bd v  -(\bd u)\pfa_{Y_1} \eta -  (-1)^{4n-|u|} u\pfa_{Y_1} \bd \eta\\
&=(\bd \xi)\pfa_{Y_1} v-\xi\pfa_{Y_1} \bd v  -(\bd u)\pfa_{Y_1} \eta -  u\pfa_{Y_1} \bd \eta\\
&=  (x+u)\pfa_{Y_1} v+ \xi\pfa_{Y_1} \bd y   +\bd x\pfa_{Y_1} \eta -u \pfa_{Y_1} (y+v) \\
&=  x\pfa_{Y_1} v+u\pfa_{Y_1} v+ \xi\pfa_{Y_1} \bd y   +\bd x\pfa_{Y_1} \eta -u \pfa_{Y_1} y -u\pfa_{Y_1} v \\
&=   \xi\pfa_{Y_1} \bd y   +\bd x\pfa_{Y_1} \eta  \\
&=    x\pfa_Z \bd y +  (-1)^{|\bd x|} \bd x\pfa_Z y \\
&=   x\pfa_Z \bd y - \bd x\pfa_Z y \\
&=\hat \Phi([x],[y]).
\end{align*}

Here was have used that $Z$ is $4n-1$ dimensional, $Y_1$ is $4n$-dimensional, $x,y,u,v$ are $2n$-dimensional, and $\xi, \eta$ are $2n+1$ dimensional. We have also used the geometrically clear fact that $x$ does not intersect $v$ and $y$ does not intersect $u$, which follows from $x$ and $y$ being in stratified general position and our collar assumptions on $\xi$ and 
$\eta$. For the sign conventions relating intersection numbers in $Y_1$ with those in $Z$, see the Appendix. We conclude from this argument that the intersection number $\Phi([x],[y])$ must be $0$, as $\hat \Phi([x],[y])$ represents the boundary of a $1$-chain in $Y_1$. Thus $\Phi(A\times A)=0$.  An analogous argument shows that $\Phi(C\times C)=0$.
\end{proof}

Now we can relate the intersection pairing on $L \subset I^{\bar p\to \bar q}H_{2n}(X;\Q)$ to the pairing $\Phi$ on $V=I^{\bar q/\bar p}H_{2n}(Z;\Q)$. 
Suppose that $[x], [y] \in L \subset S^\perp$.  Then $[x]$ and $[y]$ can be represented by 
$\bar q$-allowable cycles  $x$ and $y$ in $Z$ that are homologous via $\bar q$-allowable chains
$\chi$ and $\gamma$ in $X$ to $\bar p$ allowable
cycles $\tilde x$ and $\tilde y$ in $X$.  By definition, 
$[x] \pfc [y]  = \epsilon[\tilde x \pfa_X y]$.

The representatives $x$ and $y$ descend also to represent 
classes $[x]$ and $[y]$
in $B \cap (A + C) \subset I^{\bar q/\bar p}H_{2n}(Z;\Q)$, which in turn represent
$f([x])$ and $f([y])$ in $W$.
Since $[y]\in A+C$, we can write $[y]= [a]+[c] \in I^{\bar q/\bar p}H_{2n}(Z;\Q)$, where $[a]\in A$, $[c]\in C$
are represented by $\bar q-$allowable chains in $Z$, and
$y$ is $\bar q$ homologous to $a+c+w$ for some $\bar p$-allowable chain $w$ on $Z$. 
Since $[a]\in A$ and $[c]=[c+w]\in C$,  there exist $\bar q$-allowable chains 
$\xi\in Y_1$ and $\eta\in Y_2$ such that $\bd \xi=a+u$, $\bd \eta=c+v+w$, and $u,v$ are $\bar p$-allowable chains in $Y_1$ and $Y_2$, respectively with 
boundaries in $Z$.  We can further assume that in the collar neighborhood of $Z$, $\xi$ and $\eta$ 
have a product structure $[-1,0]\times a$ and $[0,1] \times -v-w$  and that all chains are in stratified general position. 
Observe that $a+w+c$ is $\bar q$-homologous to the $\bar p$-cycle $-u-v$ via $\xi+\eta$.

Now again consider the pairing $[x] \pfc [y]$ in $I^{\bar p\to \bar q}H_{2n}(X;\Q)$.
We have $\tilde x \pfa_X y = \tilde x \pfa_X (a+c+w) = \tilde x \pfa_X (-u-v)$.  Now we have a 
$\bar p$ allowable chain on the right, so we can replace $\tilde x$ with the $\bar q$
allowable chain, $x \subset Z$ to which it is $\bar q$-homologous to obtain $x \pfa_X (-u-v)$.  
By pushing $x$ into $Y_1$ along the collar and using the product structure of $u$ near $Z$,
we get this is equal to the intersection
$x \pfa_{X} (-u)= x \pfa_{Z} (-\bd u)=x\pfa_{Z} \bd a$. But after augmentation, this is precisely the pairing $\Phi$ on 
$I^{\bar q/\bar p}H_{2n}(Z;\Q)$, $\Phi([x],[a]):=\epsilon[x\pfa_{Z} \bd a +(-1)^{n-|x|}(\bd x) \pfa_{Z} a]$ because $x$ is a cycle, and by definition this is in turn equal to $\Psi(f([x]),f([y]))$ on $W$.  Thus
the intersection pairing on $L$ may be identified with the pairing $\Psi$ on $W$ as desired.

To check the sign in this last equality, we must be careful about which roles $A$ and $C$ are playing. Certainly we have $\frac{B\cap(A+C)}{B\cap A+B\cap C}\cong \frac{B\cap(C+A)}{B\cap C+B\cap A}$ as spaces, but $A$ and $C$ play different roles in the pairing.  In Wall, the choice of which plays which role is determined so that $A$ is associated to the half of the space whose boundary orientation agrees with the orientation of the intersection and $C$ is associated with the space whose boundary has the opposite orientation of that assigned to the intersection. Thus we can let Wall's $A$ correspond to ours and Wall's $C$ corresponds to ours, and we can also use the order $B,C,A$ for these subspaces. So, since $[x]$ represents an element of $B$, our $\Phi([x],[a])$ corresponds to Wall's $$-\Phi(\text{element of first subspace},\text{element of last subspace}),$$ where the negative sign comes from our choice of $y=a+c$ rather than the  $y+a+c=0$ that Wall uses. So, using \eqref{E: Phi}, this is $\Psi([x],[y])$. 
Thus the intersection pairing restricted to $S^{\perp}$ is taken to Wall's pairing $\Psi$ determined from $\Phi$ on $W$, and we conclude by Wall's definition that $\sigma(L)=\sigma(W;A,B,C)$.

This completes the proof of the Theorem \ref{T: Wall}.
\end{proof}

\subsection{$\bd$-stratified Pseudomanifolds}\label{S: boundary}

If we start with an s-closed pseudomanifold $X$ and decompose it as $Y_1 \cup_Z Y_2$ along
a pseudomanifold $Z$ which is not a stratum of $X$, then $Y_1$ and $Y_2$ with the 
subspace stratification induced from $X$ are $\bd$-stratified pseudomanifolds.  We would
like to be able to further decompose $X$ by cutting the $Y_i$ into pieces, but to do 
this, we need a version of our non-additivity theorem for a $\bd$-stratified pseudomanifold.
To get this result as a corollary of Theorem \ref{T: Wall}, we use the restratification trick we discussed in Section 3.1.

For intuition, consider Figure \ref{F: fig3} below, where 
$X^{4k}$ is a compact oriented $\bd$-stratified pseudomanifold with  boundary $\bd X=W$ such that $X=Y_1\cup_Z Y_2$.  Assume that $Y_1,Y_2$ are compact oriented $\bd$-stratified pseudomanifolds  such that $Y_1\cap Y_2=\bd Y_1\cap\bd Y_2=Z$ is a bicollared (in $X$) $\bd$-stratified pseudomanifold  such that $\bd Y_1=Z\cup -W_1$, $\bd Y_2=W_2\cup -Z$, and $\bd W_1=\bd W_2=\bd Z$. Also assume that $(W_1,\bd W_1)\subset (Y_1,Z)$ and $(W_2,\bd W_2)\subset (Y_2,Z)$, $(Z,\bd Z)\subset (Y_1,W_1)$, and $(Z,\bd Z)\subset (Y_2,W_2)$ are collared as pairs. Note that $\bd X=W_2\cup - W_1$. The orientations are chosen to agree with Wall's conventions in \cite{Wa69} (see also Section \ref{S: manifold}, below).

\begin{figure}[!htp]
\begin{center}
\scalebox{.4}{\includegraphics{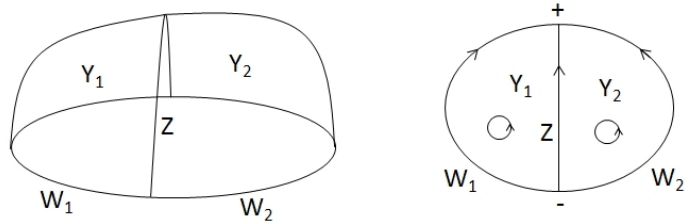}}
\caption{A schematic of a splitting of a $\bd$-stratified pseudomanifold  (left) and a flatter schematic of the relative orientations (right).}\label{F: fig3}
\end{center}
\end{figure}

Note that since $X$ is a $\bd$-stratified pseudomanifold with  boundary $W$, $W$ is \emph{not} a union of strata of $X$.
We begin by restratifying $X$ so that  $W$ becomes a union of strata, and we obtain  a stratified pseudomanifold  which we will denote 
by $\hat X$ (remember, though, that $X=\hat X$ as topological spaces). The strata of $\hat X$ are defined as follows:
\begin{enumerate}
\item for each stratum $S$ of $X$, then $S\cap X-W$ is a  stratum of $\hat X$,
\item for each stratum $S$ of $X$ such that $S\cap W\neq \emptyset$, then $S\cap W$ is a stratum of $\hat X$.
\end{enumerate}
\noindent
It is not hard to see that $\hat X$ is a PL stratified pseudomanifold. In fact, certainly $X$ and $\hat X$ agree off $W$, and if $N\cong W\times [0,1] \subset \hat X$ is a collared neighborhood of $W$ with $W=W\times \{1\}$, then under the subspace stratification from $\hat X$, $N$ is stratified as the product of $W$, with its stratification inherited from $X$, and $[0,1]$ with the stratification $[0,1]\supset \{1\}$. 
Let $\hat Y_i$, $\hat Z$ and $\hat W_i$ be the restratifications of $Y_i$, $Z$ and $W_i$ 
as subspaces of $\hat X$. Note that, with these stratifications, $\hat X$ and $\hat Z$ are PL stratified pseudomanifolds \emph{in particular without boundary}, while $\bd \hat Y_1=\hat Z$ and $\bd \hat Y_2=-\hat Z$.

Suppose $\bar p,\bar q$ are perversities on $X$, and induced also on the subspaces $Y_i$. 
Let $\hat p$ be the perversity on $\hat Y_i$ that agrees with $\bar p$ on $Y_i-W_i$ and is such that 
$\hat p(S)<0$ for all $S\subset \hat W_i$. Let $\bar q=\bar t-\bar p$. Note that then  $\hat q(S)>\bar t(S)$ for all $S\subset \hat W_i$.  Then we get the following isomorphisms of intersection homology groups.

\begin{lemma}
\begin{enumerate}
\item $I^{\hat p}H_*(\hat X;G)\cong I^{\bar p}H_*(X;G)$ and $I^{\hat q}H_*(\hat X;G)\cong I^{\bar q}H_*(X,\bd X; G)$,
\item  $I^{\hat p}H_*(\hat Z;G)\cong I^{\bar p}H_*(Z;G)$ and $I^{\hat q}H_*(\hat Z;G)\cong I^{\bar q}H_*(Z,\bd Z; G)$,
\item $I^{\hat p}H_*(\hat Y_i;G)\cong I^{\bar p}H_*(Y_i;G)$ and $I^{\hat q}H_*(\hat Y_i,\hat Z;G)\cong I^{\bar q}H_*(Y_i,\bd Y_i; G)$.\end{enumerate}
Therefore \begin{enumerate}
\item $I^{\hat p\to \hat q}H_*(\hat X;G)\cong I^{\bar p\onto \bar q}H_*(X,\bd X;G)$,
\item $I^{\hat p\to \hat q}H_*(\hat Z;G)\cong I^{\bar p\onto \bar q}H_*(Z,\bd Z;G)$,
\item $I^{\hat p\onto \hat q}H_*(\hat Y_i, \hat Z;G)\cong I^{\bar p\onto \bar q}H_*(Y_i,\bd Y_i;G).$
\end{enumerate}
Furthermore, these last isomorphisms preserve the intersection pairing when $G$ is a ring.
\end{lemma}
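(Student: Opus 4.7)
The plan is to deduce every assertion from the principle recalled in Section \ref{S: Review}: a perversity value $\bar p(S) < 0$ on a stratum $S$ forces $I^{\bar p}H_*$ to record $H_*$ of the complement of $S$, while a value $\bar p(S) > \bar t(S)$ forces it to record $H_*$ relative to (a neighborhood of) $S$. Combined with the collars of $W$ in $X$, of $\bd Z$ in $Z$, and of $\bd Y_i$ in $Y_i$, this yields all six direct isomorphisms; the ``therefore'' statements and the pairing preservation will then follow formally.

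Concretely, to prove $I^{\hat p}H_*(\hat X;G)\cong I^{\bar p}H_*(X;G)$, observe that $\hat p$ coincides with $\bar p$ on the strata of $\hat X$ lying in $X - W$ and is strictly negative on each new stratum in $\hat W_1\cup\hat W_2 = W$. Applying the extreme-perversity principle stratum-by-stratum to the strata of $W$ identifies $I^{\hat p}H_*(\hat X;G)$ with $I^{\bar p}H_*(X - W; G)$, the intersection homology of $X - W$ taken with its inherited $X$-stratification and perversity. The collar $W\times[0,1)\hookrightarrow X$ then exhibits $X - W$ as a stratum-preserving deformation retract of $X$, so $I^{\bar p}H_*(X - W;G)\cong I^{\bar p}H_*(X;G)$ by stratum-preserving homotopy invariance. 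For $I^{\hat q}H_*(\hat X;G) \cong I^{\bar q}H_*(X,\bd X;G)$ the same strategy runs with the dual principle: $\hat q > \bar t$ on the strata of $W$ gives $I^{\hat q}H_*(\hat X;G) \cong I^{\bar q}H_*(X, W;G) = I^{\bar q}H_*(X,\bd X;G)$. Parts (2) and (3) are proved identically, with $Z$ and $\bd Z$, respectively $Y_i$ and $\bd Y_i = Z\cup - W_i$, in place of $X$ and $W$.

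For the perverse-image groups, each of the chain-level identifications constructed above fits into a commutative square with the inclusion $I^{\hat p}C_*(\hat X;G)\hookrightarrow I^{\hat q}C_*(\hat X;G)$ on one side and the composition $I^{\bar p}C_*(X;G)\to I^{\bar q}C_*(X;G)\twoheadrightarrow I^{\bar q}C_*(X,\bd X;G)$ on the other. Passing to homology and then to images gives $I^{\hat p\to\hat q}H_*(\hat X;G)\cong I^{\bar p\onto\bar q}H_*(X,\bd X;G)$, and similarly for $\hat Z$ and for the pairs $(\hat Y_i,\hat Z)$. Finally, because the intersection pairing on each side is defined at the chain level via the Goresky--MacPherson geometric intersection followed by augmentation, and because representative cycles for corresponding classes can be chosen to agree as chains in the common underlying PL space $X = \hat X$ (using the collar to push off $W$ whenever necessary to achieve stratified general position on both sides at once), the two pairings coincide under our identifications.

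The main technical hurdle is the careful chain-level bookkeeping of these compatibilities---in particular, arranging the ``avoidance'' realization of $I^{\hat p}H_*(\hat X;G) \cong I^{\bar p}H_*(X;G)$ and the ``relative'' realization of $I^{\hat q}H_*(\hat X;G) \cong I^{\bar q}H_*(X,\bd X;G)$ to be induced by chain maps that simultaneously intertwine the inclusion $I^{\hat p}C_*\hookrightarrow I^{\hat q}C_*$ with the comparison map $I^{\bar p}C_*(X)\to I^{\bar q}C_*(X,\bd X)$, and that do so consistently across all three of $\hat X$, $\hat Z$, and the $\hat Y_i$. Once this is nailed down the rest is formal.
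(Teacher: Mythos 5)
Your outline follows the same strategy as the paper (extreme perversity values on the new strata $\hat W$, the collar, stratum-preserving homotopy invariance, then a formal deduction of the image-group statements and pairing compatibility), but the step you treat as a citation is exactly where the content lies, and as stated it has a gap. The ``extreme-perversity principle'' recalled in the paper is stated only for a smooth manifold pair $Z\subset X$, and it does not apply ``stratum-by-stratum'' to $\hat W$ without argument. Concretely, for the $\hat p$ half, $\hat p(S)<0$ alone does \emph{not} force allowable chains to avoid $\hat W$: for a new stratum $S\subset \hat W$ of codimension one and $\hat p(S)=-1$, the allowability condition only gives $\dim(\sigma\cap S)\leq i-2$, which permits nonempty intersection. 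One first needs the fact (the paper invokes \cite[Lemma 2.4]{GBF23}) that the groups are unchanged if $\hat p$ is pushed arbitrarily negative on $\hat W$; only then is avoidance automatic, and your complement-plus-collar argument goes through.

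For the $\hat q$ half the asserted identification $I^{\hat q}H_*(\hat X;G)\cong I^{\bar q}H_*(X,\bd X;G)$ is not an instance of any principle recalled in the paper, and your proposal never supplies the mechanism. The paper's argument is: after taking $\hat q$ arbitrarily large on $\hat W$, every allowable chain in the collar $N\cong W\times(0,1]$ is homologous by product homologies into $\hat W$, where the \emph{stratified} coefficient system vanishes (the new strata are singular), so $I^{\hat q}H_*(N;G)=0$ (and likewise for the relative collar groups); a Mayer--Vietoris argument for the cover by $N$ and the complement of $\hat W$ then yields the relative identification. This vanishing-on-the-collar step is what replaces your ``dual principle,'' and it is also where part (3) needs extra care: only $W_i$ is restratified, while $\hat Z$ remains a genuine stratified boundary appearing in the relative pair $(\hat Y_i,\hat Z)$, so the comparison is $I^{\hat q}H_*(\hat Y_i,\hat Z;G)\cong I^{\bar q}H_*(Y_i,Z\cup W_i;G)$ via a relative Mayer--Vietoris, not a verbatim repetition of the absolute case with $\bd Y_i$ playing the role of $W$. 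With those two justifications added, the rest of your plan (the commuting squares for the image groups and the chain-level comparison of pairings) is sound and matches the paper's ``the rest follows easily.''
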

\begin{proof}
We will show the proof for $\hat Y_i$; the others are the same (though easier without the extra stratified boundary component). 

By \cite[Lemma 2.4]{GBF23}, we may assume $\hat p$ to be arbitrarily negative on $\hat W_i$. Therefore, it follows from the definition that no $\hat p$-allowable simplex can intersect $\hat W_i$. Thus $I^{\hat p}H_*(\hat Y_i;G)\cong I^{\hat p}H_*(\hat Y_i-\hat W_i;G)\cong I^{\bar p}H_*(Y_i-W_i;G)\cong I^{\bar p}H_*(Y_i;G)$, the last isomorphism by stratum-preserving homotopy equivalence. 

Next, by \cite[Lemma 2.4]{GBF23}, we might assume $\hat q$ to be arbitrarily large on $\hat W_i$. Thus there is no impediment to chains intersecting $W_i$. Thus in the neighborhood $N$ of $\hat W_i$ that is the product of $W_i$ with $(0,1]\supset \{1\}$, all allowable chains are homologous by product homologies to chains in $\hat W_i$. But as $\hat W_i$ consists entirely of singular strata, the coefficient system is $0$ there, and so $I^{\hat q}H_*(N;G)=0$ and similarly  $I^{\hat q}H_*(N,N\cap Z_0;G)=0$.
The isomorphism  $I^{\hat q}H_*(\hat Y_i,\hat Z;G)\cong I^{\bar q}H_*(Y_i,\bd Y_i; G)$ now follows by some easy arguments from the Mayer-Vietoris sequence for the pair consisting of $(N,N\cap Z_0)$ and $(\hat Y_i-\hat W_i, \hat Z-\hat W_i\cap \hat Z)$.  

The rest of the lemma follows easily.
\end{proof}

Let $\hat V=I^{\hat q/\hat p}H_{2n}(\hat Z;\Q)$ equipped with the anti-symmetric pairing $\Phi$. Let 
$$\hat A=\ker(I^{\hat q/\hat p}H_{2n}(\hat Z;\Q)\to I^{\hat q/\hat p}H_{2n}(\hat Y_1;\Q)),$$ 
$$\hat C=\ker(I^{\hat q/\hat p}H_{2n}(\hat Z;\Q)\to I^{\hat q/\hat p}H_{2n}(\hat Y_2;\Q)),$$
and 
$$\hat B=\ker(d: I^{\hat q/\hat p}H_{2n}(\hat Z;\Q)\to I^{\hat p}H_{2n-1}(\hat Z;\Q)).$$

\begin{corollary}\label{C: cor}
$$\sigma_{\bar p\onto \bar q}(X)=\sigma_{\bar p\onto \bar q}(Y_1)+\sigma_{\bar p\onto \bar q}(Y_2)+ \sigma(\hat V,\hat A,\hat B,\hat C).$$ 
\end{corollary}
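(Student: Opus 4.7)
The plan is to reduce to Theorem \ref{T: Wall} by invoking the restratification construction $X \rightsquigarrow \hat X$ introduced just above the corollary, and then translating each term of the non-additivity formula using the preceding lemma. First I would verify that the hatted data satisfies the hypotheses of Theorem \ref{T: Wall}: by construction, $W$ becomes a union of singular strata of $\hat X$, so $\hat X$ is a stratified pseudomanifold in the sense of Definition \ref{D: pseudomanifold} with $\bd \hat X = \emptyset$ (i.e.\ $\hat X$ is s-closed), and it remains oriented of dimension $4n$ since the restratification does not alter the topological or PL structure. The bicollar of $Z$ in $X$ restricts to a bicollar of $\hat Z$ in $\hat X$, so that $\hat X = \hat Y_1 \cup_{\hat Z} \hat Y_2$ with $\bd \hat Y_1 = \hat Z = -\bd \hat Y_2$ in the sense of $\bd$-stratified pseudomanifolds. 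A direct check from the definitions $\hat p(S) < 0$ and $\hat q = \bar t - \hat p$ on strata $S \subset \hat W$ gives $\hat p + \hat q = \bar t$ and $\hat p \leq \hat q$ on every singular stratum of $\hat X$ (and similarly on $\hat Y_i$ and $\hat Z$).

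Next I would apply Theorem \ref{T: Wall} to the s-closed pseudomanifold $\hat X$ decomposed as $\hat Y_1 \cup_{\hat Z} \hat Y_2$, with perversities $\hat p$ and $\hat q$, to obtain
\[
\sigma_{\hat p \to \hat q}(\hat X) = \sigma_{\hat p \onto \hat q}(\hat Y_1) + \sigma_{\hat p \onto \hat q}(\hat Y_2) + \sigma(\hat V; \hat A, \hat B, \hat C),
\]
where $\hat V, \hat A, \hat B, \hat C$ are the spaces defined in the statement of the corollary. The Maslov index term is then already in the form we want, so no translation of that piece is needed.

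Finally, I would invoke the preceding lemma to identify the three perverse signature terms on the left. By part (1) of the lemma, $I^{\hat p \to \hat q}H_{2n}(\hat X;\Q) \cong I^{\bar p \onto \bar q}H_{2n}(X,\bd X;\Q)$, and the intersection pairings correspond, so the induced pairings have equal signatures: $\sigma_{\hat p \to \hat q}(\hat X) = \sigma_{\bar p \onto \bar q}(X)$. Part (3) of the same lemma gives the analogous isomorphism for each $Y_i$, so $\sigma_{\hat p \onto \hat q}(\hat Y_i) = \sigma_{\bar p \onto \bar q}(Y_i)$. Substituting these equalities into the displayed equation yields the corollary.

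The main obstacle here is essentially cosmetic rather than mathematical: one must be careful that the intersection pairing on $I^{\hat p \to \hat q}H_{2n}(\hat X;\Q)$, defined via chains in stratified general position with respect to the \emph{hat} stratification, agrees with the intersection pairing on $I^{\bar p \onto \bar q}H_{2n}(X,\bd X;\Q)$ under the isomorphism. This is guaranteed by the final clause of the preceding lemma, whose verification amounts to noting that representative chains may be pushed off of (or into) a small collar of $W$ without changing their homology classes, and transversal intersection of chains supported away from $W$ is unaffected by whether we view $W$ as a boundary or as a subspace made up of strata. Once this correspondence of pairings is granted, the proof is a direct substitution.
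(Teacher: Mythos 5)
Your proposal is correct and follows essentially the same route as the paper, which proves Corollary \ref{C: cor} exactly by applying Theorem \ref{T: Wall} to the restratified space $\hat X=\hat Y_1\cup_{\hat Z}\hat Y_2$ with the perversities $\hat p,\hat q$ and then translating the signature terms via the preceding lemma (the paper's own proof is just a one-line citation of these two ingredients). Your added care about the pairings corresponding under the isomorphisms is precisely what the final clause of that lemma supplies, so nothing further is needed.
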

\begin{proof}
The corollary follows from Theorem \ref{T: Wall} and the preceding lemma.
\end{proof}

It is reasonable to ask the following
question: Is it possible to identify $\sigma(\hat V;\hat A,\hat B,\hat C)$ as an invariant of a pairing involving only subspaces of intersection homology groups associated with $Z$? 
Unfortunately, the obvious choices do not seem to be correct. For example, $\sigma(\hat V,\hat A,\hat B,\hat C)$ cannot be the signature of the pairing $\Phi$ on $\im(I^{\bar q/\bar p}H_{2n}(Z)\to I^{\bar q/\bar p}H_{2n}(Z,\bd Z))$, which would be a natural guess. To see this, suppose all spaces are manifolds with boundary as in Wall's original non-additivity theorem. In this case, $Z$ is a manifold and $I^{\bar q/\bar p}H_{2n}(Z)=0$. Thus this term would always have to be $0$, which is certainly not true. Another natural guess would be that 
$\sigma(\hat V;\hat A,\hat B,\hat C)$ would be expressible in terms of a pairing on the intersection homology of $\bd Z$. However, this cannot be, as Theorem \ref{T: Wall} should be a special case of Corollary \ref{C: cor} in which all stratified boundaries (except for the intersection $Z$ itself) are empty. In such a case, any groups associated with $\bd Z$ would vanish, and this would violate the existence of the Maslov index term in Theorem \ref{T: Wall}. 

\begin{remark}
Rather than restratifying as we have done, it is tempting to do ``the usual thing'' and treat $\bd$-stratified pseudomanifolds  by simply adding a cone on the boundary and working with the resulting space. However, that will not quite do here, as $Z\cup_{\bd Z}c(\bd Z)$ will not generally be bicollared in $X\cup_{\bd X} c(\bd X)$. 

One alternative would be the following construction. Beginning with the bi-collared $Z\times [0,1]\subset X$, consider $X'=X\cup_{\bd Z\times [0,1]}(c(\bd Z)\times [0,1])$. Then $X'$ has stratified boundaries homeomorphic to $W_1 \cup_{\bd Z}c(\bd Z)$ and  $W_2\cup_{\bd Z}c(\bd Z)$. By separately coning off these stratified boundary components we get a space $X''$ that possesses strata $[0,1]\supset \{0,1\}$ and such that $X''-[0,1]$ is homeomorphic to the interior of $X$. If the perversities $\bar p$ and $\bar q$ are extended so that $\bar p$ takes values $<0$ on the new strata and $\bar q$ takes values $>\bar t$ on the new strata, then the intersection homology of $X''$ with these perversities is homeomorphic to that of $\hat X$ with respect to $\hat p$ and $\hat q$. 
\end{remark}

\section{Relation to Wall's non-additivity theorem}\label{S: manifold}

If we take our $\bd$-stratified pseudomanifolds to be $\bd$-manifolds then, as expected, we recover Wall's non-additivity theorem. The relationship between our Maslov index and Wall's is not completely obvious, as 
the pairing $\Phi$ requires interpretation from the manifold point of view. We will show that, in fact,  if $M$ is a $\bd$-manifold with non-empty manifold boundary and $X$ is the pseudomanifold obtained by coning off $\bd M$, then $I^{\bar q/\bar p}H_*(X)$ is just $H_{*-1}(\bd M)$ and the pairing $\Phi$ is the intersection pairing on $\bd M$, up to sign.

Suppose $M^m$ is a compact $\bd$-manifold with non-empty manifold boundary, $\bd M$. Let $X$ denote $M\cup_{\bd M} c(\bd M)$. We suppose $X$ is stratified as $X\supset v$, where $v$ is the cone vertex. Let $\bar p,\bar q$ be perversities for which $\bar p(v)<0$ and $\bar q(v)>m-2$. Then $I^{\bar p}H_*(X;G)\cong H_*(M;G)$ and $I^{\bar q}H_*(X;G)\cong H_*(M,\bd M;G)$. We would then expect from the long exact sequences of the pairs that $I^{\bar q/\bar p}H_*(X;G)\cong H_{*-1}(\bd M;G)$. We will make this isomorphism explicit.

Suppose $\xi$ is an $j$-chain in $\bd M$. Let $c\xi$ denote the chain obtained by coning off $\xi$ in $c(\bd M) \subset X$. In other words, if $\xi=\sum a_i \sigma_i$, then $c\xi=\sum a_i c(\sigma_i)$, where for a simplex $\sigma$, $c(\sigma):\Delta^{j+1}\to X$ is the cone on the map $\sigma:\Delta^j\to \bd M$ obtained by extending $\sigma$ linearly to the cone vertex. 
We take the convention that the new vertex is the \emph{first} vertex in $c\sigma$. With this convention, $\bd (c\xi)=\xi+c(\bd \xi)$. This coning $c$ determines a homomorphism $c: C_{*-1}(\bd M;G)\to I^{\bar q/\bar p}C_*(X;G)$, since every $c\xi$ is $\bar q$-allowable, as can be confirmed from the definition of allowability as $(c(\sigma))^{-1}(v)\subset \text{the $0$-skeleton of $\Delta^{j+1}$}$ for every singular simplex $\sigma$ in $\bd M$. Furthermore, $c$ is a chain map, as $\bd (c\xi)=\xi+c(\bd \xi)=c\bd \xi\in I^{\bar q/\bar p}C_*(X;G)$, since $\xi$ is $\bar p$-allowable. As a chain map, $c$ induces a homomorphism on homology, which we claim is an isomorphism.

\begin{lemma}\label{L: manifold}
The homomorphism $c$ induces an isomorphism $H_{*-1}(\bd M;G)\to I^{\bar q/\bar p}H_*(X;G)$.
\end{lemma}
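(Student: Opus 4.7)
The plan is to fit $c_*$ into a commutative ladder of long exact sequences, four of whose vertical maps are already isomorphisms, and then invoke the five lemma. The two long exact sequences I would compare are the sequence of the pair $(M,\bd M)$,
$$\cdots \to H_i(M;G) \xrightarrow{j_*} H_i(M,\bd M;G) \xrightarrow{\partial} H_{i-1}(\bd M;G) \xrightarrow{\iota_*} H_{i-1}(M;G) \to \cdots,$$
and the sequence induced by the short exact sequence $0 \to I^{\bar p}C_*(X;G) \to I^{\bar q}C_*(X;G) \to I^{\bar q/\bar p}C_*(X;G) \to 0$, namely
$$\cdots \to I^{\bar p}H_i(X;G) \to I^{\bar q}H_i(X;G) \xrightarrow{\pi_*} I^{\bar q/\bar p}H_i(X;G) \xrightarrow{d} I^{\bar p}H_{i-1}(X;G) \to \cdots.$$

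The outer four vertical arrows will be the identifications $\alpha: H_*(M;G) \xrightarrow{\cong} I^{\bar p}H_*(X;G)$ and $\beta: H_*(M,\bd M;G) \xrightarrow{\cong} I^{\bar q}H_*(X;G)$ recalled just before the lemma. The map $\alpha$ is induced by the inclusion of chain complexes $C_*(M;G) \hookrightarrow I^{\bar p}C_*(X;G)$, which makes sense because no $\bar p$-allowable simplex can meet $v$ when $\bar p(v)<0$. For $\beta$, I would use the explicit recipe $\zeta \mapsto \zeta - c(\bd\zeta)$: given a relative cycle $\zeta \in C_i(M;G)$ with $\bd\zeta \in C_{i-1}(\bd M;G)$, the chain $\zeta - c(\bd\zeta)$ is a genuine $\bar q$-allowable cycle in $X$, as one verifies from $\bd(c(\bd\zeta)) = \bd\zeta + c(\bd\bd\zeta) = \bd\zeta$.

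I would then verify commutativity of the ladder square by square. The only square requiring a computation is the one comparing $\partial$ to $\pi_*$: starting from $[\zeta]$, the right-then-down route gives $c_*\partial[\zeta] = [c(\bd\zeta)]$, while the down-then-right route gives $\pi_*\beta[\zeta] = [\zeta - c(\bd\zeta)] = -[c(\bd\zeta)]$ in $I^{\bar q/\bar p}H_i(X;G)$, because $\zeta$ is $\bar p$-allowable and so vanishes modulo $I^{\bar p}C_*$. These agree up to sign, which is harmless for the five lemma. The square matching $d$ against $\iota_*$ commutes on the nose: for a cycle $\xi \in C_{i-1}(\bd M;G)$, $dc_*[\xi] = [\bd c\xi] = [\xi + c(\bd\xi)] = [\xi]$, which coincides with $\alpha\iota_*[\xi]$. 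The remaining two squares commute by the very definitions of $\alpha$ and $\beta$.

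The five lemma then forces $c_*$ to be an isomorphism. The main obstacle I anticipate is nailing down the chain-level model for $\beta$ precisely enough to execute the $\partial$-versus-$\pi_*$ comparison; once the recipe $\zeta \mapsto \zeta - c(\bd\zeta)$ is justified as realizing the standard isomorphism $H_*(M,\bd M;G) \cong I^{\bar q}H_*(X;G)$, the remainder of the argument reduces to a routine diagram chase with some sign bookkeeping.
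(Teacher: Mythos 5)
Your proposal is correct and follows essentially the same route as the paper: a ladder comparing the long exact sequence of $(M,\bd M)$ with the $\bar q/\bar p$ sequence, vertical maps given by chain inclusion and $\zeta\mapsto \zeta-c(\bd\zeta)$, commutativity checked up to sign (with the same key observation that the image of $\zeta-c(\bd\zeta)$ in $I^{\bar q/\bar p}C_*$ is $-c(\bd\zeta)$), and the five lemma.
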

\begin{proof}
Consider the diagram (with coefficients suppressed)

\begin{diagram}
&\rTo& H_{i+1}(M,\bd M)&\rTo& H_i(\bd M)&\rTo &H_i(M)&\rTo & H_i(M,\bd M) &\rTo\\
&&\dTo&&\dTo&&\dTo&&\dTo\\
&\rTo& I^{\bar q}H_{i+1}(X)&\rTo& I^{\bar q/\bar p}H_{i+1}(X)&\rTo &I^{\bar p}H_i(X)&\rTo & I^{\bar q}H_i(X) &\rTo\\
\end{diagram}
It will suffice to show this diagram commutes (up to sign). The map from $H_i(M)$ to $I^{\bar p}H_i(X)$ is given by inclusion, the map $H_i(M,\bd M)$ to $I^{\bar q}H_i(X)$ is given by taking a representative $\xi$ to $\xi-c(\bd \xi)$. 

It is easy to check the the squares on the right and in the middle commute. For the square on the left, note that if $\xi$ is a chain representing an element of $H_{i+1}(M,\bd M)$, then the image of $\xi-c(\bd \xi)$ in $I^{\bar q/\bar p}H_{i+1}(X)$ is simply $-c(\bd \xi)$ as $\xi$ is $\bar p$-allowable. This is enough to establish that the left square commutes up to sign. Thus the diagram commutes up to sign and has exact rows, which is enough to establish the isomorphism via the five-lemma.
\end{proof}

\begin{proposition}\label{P: manifold}
If $R$ is a ring and $M$ is a $4n-1$ $\bd$-manifold, the isomorphism $c$ of the preceding lemma takes   the intersection pairing $H_{2n-1}(\bd M;R)\otimes H_{2n-1}(\bd M;R)\to R$ to the pairing $-\Phi: I^{\bar q/\bar p}H_{2n}(X;R)\otimes I^{\bar q/\bar p}H_{2n}(X;R)\to R$, i.e. $[x]\pfb_{\bd M} [y]=-\Phi(c[x],c[y])$. 
\end{proposition}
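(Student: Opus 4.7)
The plan is to compute $\Phi(c[x], c[y])$ directly from its definition, but with $c[y]$ represented by a chain chosen so that all the relevant intersections lie in stratified general position and can be read off geometrically. Parametrizing $c(\bd M) = [0,1] \times \bd M / (\{0\} \times \bd M)$ and fixing a small $s \in (0,1)$, for any cycle $y \subset \bd M$ representing $[y]$ I introduce the truncated cone
$$cy^s \;=\; \text{image of}\ \bigl([0,s] \times y\bigr)/(\{0\} \times y) \ \subset\ c(\bd M),$$
oriented by the coning convention (new vertex first), so that $\bd cy^s = y_s := \{s\} \times y$. Since $y_s$ does not touch the vertex, it is $\bar p$-allowable, and $cy^s$ is a $\bar q$-allowable $2n$-chain, hence a cycle in $I^{\bar q/\bar p}C_{2n}(X;R)$.

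The first step is to verify that $[cy^s] = c[y]$ in $I^{\bar q/\bar p}H_{2n}(X;R)$. This is immediate: the chain-level difference $cy - cy^s$ is supported in the cylinder $[s,1] \times y$, which does not touch the vertex, is therefore $\bar p$-allowable, and so vanishes in $I^{\bar q}C_*/I^{\bar p}C_*$. With the choice of representatives $cx$ and $cy^s$, and using $(-1)^{(4n-1)-2n} = -1$, the formula $\tilde\Phi(x,y) = x \pfa \bd y + (-1)^{n-|x|}(\bd x)\pfa y$ reduces to
$$\Phi(c[x], c[y]) \;=\; \epsilon\bigl[\,cx \pfa y_s \;-\; x \pfa cy^s\,\bigr].$$
A routine dimension count verifies that $cx, cy^s, x, y_s$ are pairwise in stratified general position in $X$: all $\bd M$-level chains sit at $t = 1$ while $cy^s, y_s$ sit in $t \in [0, s]$, so the boundary-chain pairs $x, cy^s$ and $x, y_s$ have disjoint supports; the only shared vertex intersection $cx \cap cy^s \cap \{v\}$ is the $0$-dimensional set $\{v\}$, which is generic for the $0$-dimensional singular stratum.

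The second intersection $x \pfa cy^s$ vanishes by disjoint supports. The first intersection $cx \pfa y_s$ is transverse in the slice $\{s\} \times \bd M$ at precisely the points $\{s\} \times (x \cap y)$. The key orientation computation runs as follows: the paper's coning convention gives $\omega_{cx} = dt \wedge \omega_x$ (a short barycentric Jacobian check), while the outward-normal-first boundary convention from the Appendix, extended by bicollar continuity of $\omega_X$ across $\bd M$, yields $\omega_X = -dt \wedge \omega_{\bd M}$ on the cone side. Hence at each intersection point $p$,
$$\omega_{cx} \wedge \omega_{y_s} \;=\; dt \wedge \omega_x \wedge \omega_y \;=\; (\text{sign}_p\, x \pfa_{\bd M} y)\cdot dt \wedge \omega_{\bd M} \;=\; -(\text{sign}_p\, x \pfa_{\bd M} y)\cdot \omega_X,$$
so $cx \pfa y_s = -\, x \pfa_{\bd M} y$ as $0$-chains. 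Augmenting gives the claim $\Phi(c[x], c[y]) = -[x] \pfb_{\bd M}[y]$.

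The main obstacle is exactly the orientation bookkeeping in the preceding paragraph. Three sign conventions must be reconciled simultaneously: the apex-first coning convention used to define $c$; the outward-normal-first boundary convention from the Appendix, applied to $\bd M \subset M$; and the continuity of the ambient orientation of $X$ across the bicollar between the $M$-side and the $c(\bd M)$-side of $\bd M$. Each contributes a sign, and it is their combination, rather than any single convention, that produces the final minus sign in front of $x \pfa_{\bd M} y$. Once these are set up consistently with the conventions of the Appendix of the paper, the remainder of the argument is routine.
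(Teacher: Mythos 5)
Your proof is correct and follows essentially the same route as the paper's: represent $c[y]$ by the cone on a copy of $y$ pushed down the cone line (your truncated cone $cy^s$), kill the term $x\pfa cy^s$ by disjointness of supports, and identify $cx\pfa y_s$ with $-x\pfa_{\bd M} y$ by reconciling the apex-first coning orientation with the outward-normal-first boundary convention of the Appendix. The only differences are presentational — you carry out the sign bookkeeping with oriented frames ($\omega_{cx}=dt\wedge\omega_x$, $\omega_X=-dt\wedge\omega_{\bd M}$) where the paper transposes vertices of the cone simplices, and you add the (correct, and in the paper implicit) check that $cy-cy^s$ is $\bar p$-allowable so that $[cy^s]=c[y]$.
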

\begin{proof}
Suppose $x\in C_i(\bd M;R)$, $y\in C_j(\bd M;R)$, represented by cycles in general position. Let $cx$ be the cone on $x$ as described above, and let $cy$ be the cone on $y$ except assuming that $y$ has first been pushed outward slightly into $c\bd M\subset X$ along the cone line so that $x$ does not intersect $cy$. In fact, we observe geometrically that $x\pfa_X cy=(\bd (cx))\pfa_X cy=0$, while $(cx)\pfa_X y=(cx)\pfa_X \bd (cy)$ must equal $x\pfa_{\bd M} y$ up to sign. This will establish the claim once we work out the sign.

We write out the argument in simplicial notation, which of course is not quite the actual situation, but it provides the correct intuition and reasoning.
With this abuse of notation, simplices of $cx$ have the form $[v,\sigma]=[v,v_0,\ldots, v_i]$, where $v$ is the singular point of $X$ and the $v_i$ are the vertices of $\sigma$, a simplex of $x$. The orientation here corresponds to a basis of vectors $[v,v_0],\ldots, [v,v_i]$. To compare with the orientation of $\sigma$, though, it is best to note that $[v,v_0,\ldots, v_i]=-[v_0,v,v_1,\ldots, v_i]$. Here $[v_0,v,v_1,\ldots, v_i]$ has an orientation corresponding to a basis of vectors $[v_0,v],[v_0,v_1],\ldots, [v,v_i]$, which is a basis for $\sigma$ with a vector from $v_0$ to $v$, which corresponds to an outward pointing normal from $M$, adjoined at the beginning. Thus, using our conventions from the Appendix, $\Phi(c[x],c[y])=\epsilon[cx\pfa_X \bd (cy)]=\epsilon[cx \pfa_X y]=\epsilon[-x\pfa_{\bd M}y]$.
\end{proof}

There is an alternative way to formulate the above correspondences using codimension one strata. In particular, instead of forming $X$, we can stratify $M$ as $M\supset \bd M$, where $\bd M$ is treated as a codimension one stratum of the stratified space $M$. If we then choose perversities $\bar p, \bar q$ such that $\bar p(\bd M)<0$, $\bar q(\bd M)\geq 0$, then we will again have $I^{\bar p}H_*(M;G)=H_*(M;G)$ and $I^{\bar q}H_*(M;G)=H_*(M,\bd M;G)$. This follows from \cite[Lemma 2.4]{GBF23}, which says that this is equivalent to choosing $\bar p(\bd M)$ arbitrarily negative and $\bar q(\bd M)$ arbitrarily large, and then simple arguments taking into account with the  stratified coefficient system. 

Using this alternative correspondence, we can recover Wall's non-additivity theorem \cite{Wa69}. In Wall's situation, we suppose 
$M^{4n}$ is a compact oriented $\bd$-manifold with manifold  boundary $W$ such that $M=M_1\cup M_2$, where $M_1,M_2$ are compact oriented $\bd$-manifolds and $M_1\cap M_2=\bd M_1\cap\bd M_2$ is a  $\bd$-manifold $N$   such that $\bd M_1=N-W_1$, $\bd M_2=B_2-N$, and $\bd N_1=\bd N_2=\bd N=P$.

Let $V=H_{2n-1}(P;\Q)$, and let $A,B,C$ be the respective kernels of the maps induced by inclusion from $V$ to $H_{2n-1}(W_1;\Q)$, $H_{2n-1}(N;\Q)$, $H_{2n-1}(W_2;\Q)$. For a $4n$ $\bd$-manifold, $\sigma(M)$ denotes the signature of the pairing on $\im(H_{2n}(M;\Q)\to H_{2n}(M,\bd M;\Q))$.

\begin{corollary}[Wall]
\label{cor:wall}
$$\sigma(M)=\sigma(M_1)+\sigma(M_2)-\sigma(V;A,B,C).$$
\end{corollary}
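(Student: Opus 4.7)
The idea is to convert Wall's manifold setup into an instance of Corollary \ref{C: cor} by restratifying each manifold so that its manifold boundary becomes a codimension-one stratum, then identify every ingredient of the intersection-homology non-additivity formula with its manifold counterpart. Stratify $M$ as $M\supset W$, each $M_i$ as $M_i\supset W_i$, and $N$ as $N\supset P$ (with the obvious compatibilities). Choose general perversities $\bar p, \bar q$ with $\bar p+\bar q=\bar t$ such that $\bar p$ is very negative and $\bar q$ is very large on every newly-introduced stratum. By the alternative correspondence described just before Corollary \ref{cor:wall} (which rests on \cite[Lemma 2.4]{GBF23}), we have $I^{\bar p}H_*(M)\cong H_*(M)$ and $I^{\bar q}H_*(M)\cong H_*(M,\bd M)$, and analogously for $M_i$ and $N$. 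Hence the perverse signatures reduce to the usual manifold signatures: $\sigma_{\bar p\onto \bar q}(M)=\sigma(M)$ and $\sigma_{\bar p\onto \bar q}(M_i)=\sigma(M_i)$.

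Apply Corollary \ref{C: cor} with $X=M$, $Y_i=M_i$, $Z=N$. After the restratification there described, we obtain
$$\sigma(M)=\sigma(M_1)+\sigma(M_2)+\sigma(\hat V;\hat A,\hat B,\hat C).$$
It remains to identify the Maslov-index term with $-\sigma(V;A,B,C)$. For this, first identify $\hat V=I^{\hat q/\hat p}H_{2n}(\hat N;\Q)$ with $H_{2n-1}(P;\Q)$ using the codimension-one-stratum analogue of Lemma \ref{L: manifold}: the argument of that lemma (comparing the long exact sequence of $I^{\hat p}\to I^{\hat q}\to I^{\hat q/\hat p}$ for $\hat N$ with the long exact sequence of the pair $(N,P)$, and using that $\bar p$ very negative and $\bar q$ very large identify the absolute and relative intersection groups with $H_*(N)$ and $H_*(N,P)$) goes through verbatim. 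Under the same identification the pairing $\Phi$ pulls back to $-1$ times the ordinary intersection pairing on $H_{2n-1}(P;\Q)$, by the codimension-one analogue of Proposition \ref{P: manifold} (the sign is extracted in exactly the same way, from the boundary-orientation convention in the collar of $P$).

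Next, identify the three Lagrangian subspaces. For $\hat A$, compute $I^{\hat q/\hat p}H_{2n}(\hat M_1;\Q)$ via the long exact sequence of the short exact sequence $0\to I^{\hat p}C_*\to I^{\hat q}C_*\to I^{\hat q/\hat p}C_*\to 0$: the outer terms become $H_*(M_1;\Q)$ and $H_*(M_1,W_1;\Q)$, so a five-lemma comparison with the long exact sequence of $(M_1,W_1)$ yields $I^{\hat q/\hat p}H_{2n}(\hat M_1;\Q)\cong H_{2n-1}(W_1;\Q)$, and the inclusion-induced map $I^{\hat q/\hat p}H_{2n}(\hat N)\to I^{\hat q/\hat p}H_{2n}(\hat M_1)$ becomes the inclusion-induced $H_{2n-1}(P)\to H_{2n-1}(W_1)$. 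Therefore $\hat A=A$, and symmetrically $\hat C=C$. For $\hat B$, a similar naturality argument identifies the connecting map $d\colon I^{\hat q/\hat p}H_{2n}(\hat N)\to I^{\hat p}H_{2n-1}(\hat N)$ with (up to sign) the inclusion-induced $H_{2n-1}(P)\to H_{2n-1}(N)$, giving $\hat B=B$.

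Finally, observe that negating a skew-symmetric form $\Phi$ negates the induced symmetric form $\Psi$ on $W=(A\cap(B+C))/(A\cap B+A\cap C)$ (see the formula in Section \ref{S: Wall1}), and hence negates its signature. Since under our identifications $\Phi$ corresponds to $-1$ times the ordinary intersection pairing on $H_{2n-1}(P;\Q)$, we conclude $\sigma(\hat V;\hat A,\hat B,\hat C)=-\sigma(V;A,B,C)$, which substituted into the displayed formula yields Wall's equation. The main subtlety throughout is bookkeeping signs: keeping track of which of $A,B,C$ sits in which slot of the Maslov index (a matter of ordering conventions) and verifying that the sign discrepancy from Proposition \ref{P: manifold} is the only one that survives after all the identifications are made.
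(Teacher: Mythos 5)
Your proposal is correct and follows essentially the same route as the paper: restratify so that the manifold boundaries become codimension-one strata, choose $\bar p$ very negative with $\bar q=\bar t-\bar p$ so that the perverse signatures become the ordinary signatures, identify $I^{\bar q/\bar p}H_{2n}$ of the dividing hypersurface with $H_{2n-1}(P;\Q)$ carrying the negative of the intersection pairing (Lemma \ref{L: manifold} and Proposition \ref{P: manifold}), match the three Lagrangians, and extract the minus sign from the pairing identification. The only difference is presentational: the paper applies Theorem \ref{T: Wall} directly to the restratified $M$ rather than routing through Corollary \ref{C: cor}, and it leaves the identification of $A$, $B$, $C$ implicit, which you spell out.
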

\begin{proof}
Wall's theorem follows from our Theorem \ref{T: Wall} as follows. Restratify $M$ as $M\supset \bd M$ and choose $\bar p(\bd M)$ arbitrarily negative and $\bar q(\bd M)=\bar t(\bd M)-\bar p(\bd M)=-1-\bar p(\bd M)$. Then, $I^{\bar p}H_*(M;G)=H_*(M;G)$,  $I^{\bar q}H_*(M;G)=H_*(M,\bd M;G)$, $I^{\bar p}H_*(M_i;G)=H_*(M_i;G)$, and  $I^{\bar q}H_*(M_i,N;G)=H_*(M_i,\bd M_i;G)$ . In particular, $I^{\bar p\to \bar q}H_*(M;\Q)\cong \im(H_*(M;\Q)\to H_*(M,\bd M;\Q))$, and $I^{\bar p\onto \bar q}H_*(M_i,N;\Q)\cong \im(H_*(M_i;\Q)\to H_*(M_i,\bd M_i;\Q))$. Furthermore, by Lemma \ref{L: manifold} and Proposition \ref{P: manifold}, $I^{\bar q/\bar p}H_*(N;\Q)\cong H_{*-1}(P;R)$ with $\Phi$ corresponding to the negative of the intersection pairing on $P$. The corollary thus follows from Theorem \ref{T: Wall}. 
\end{proof}

\section{Calculational tools and examples}

In this section, we provide some calculational tools for perverse signatures by applying our (non-)additivity theorem and use these to calculate some examples of perverse signatures.  Some of 
our tools are versions of standard results in the signature package for manifolds.

The first tool is a version of Novikov additivity for perverse signatures.

\begin{corollary}\label{C: novikov}
With the hypotheses of Theorem \ref{T: Wall}, suppose $I^{\bar p}H_{2n}(Z;\Q)\to I^{\bar q}H_{2n}(Z;\Q)$ is surjective and $I^{\bar p}H_{2n-1}(Z;\Q)\to I^{\bar q}H_{2n-1}(Z;\Q)$ is injective. Then $$\sigma_{\bar p\to\bar q}(X)=\sigma_{\bar p\onto\bar q}(Y_1)+\sigma_{\bar p\onto\bar q}(Y_2),$$ as in Novikov's additivity theorem. In particular, Novikov additivity holds if $Z$ is a closed oriented manifold with trivial stratification.
\end{corollary}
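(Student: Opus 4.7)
The plan is to observe that the corollary follows immediately from Theorem \ref{T: Wall} once we show that the two surjectivity/injectivity hypotheses force the vector space $V=I^{\bar q/\bar p}H_{2n}(Z;\Q)$ itself to be zero. If $V=0$, then any Maslov triple index $\sigma(V;A,B,C)$ is trivially zero, since $A$, $B$, $C$ are all subspaces of $V$, so the space $W=(A\cap(B+C))/(A\cap B+A\cap C)$ used to define $\sigma(V;A,B,C)$ vanishes and carries the zero form.

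To see that $V=0$, I would use the long exact sequence of the short exact sequence of chain complexes $0\to I^{\bar p}C_*(Z;\Q)\to I^{\bar q}C_*(Z;\Q)\to I^{\bar q/\bar p}C_*(Z;\Q)\to 0$ at the relevant spot:
\begin{equation*}
I^{\bar p}H_{2n}(Z;\Q)\xrightarrow{\;(i_{\bar p\to\bar q})_*\;} I^{\bar q}H_{2n}(Z;\Q)\to I^{\bar q/\bar p}H_{2n}(Z;\Q)\xrightarrow{\;d\;} I^{\bar p}H_{2n-1}(Z;\Q)\xrightarrow{\;(i_{\bar p\to\bar q})_*\;} I^{\bar q}H_{2n-1}(Z;\Q).
\end{equation*}
By hypothesis, the left map is surjective and the right map is injective. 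Exactness at $I^{\bar q}H_{2n}(Z;\Q)$ then gives that the map to $I^{\bar q/\bar p}H_{2n}(Z;\Q)$ is zero, while exactness at $I^{\bar p}H_{2n-1}(Z;\Q)$ gives that $d=0$. Exactness at $I^{\bar q/\bar p}H_{2n}(Z;\Q)$ now forces $V=I^{\bar q/\bar p}H_{2n}(Z;\Q)=0$, and the main statement follows from Theorem \ref{T: Wall}.

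For the parenthetical special case, I would note that if $Z$ is a closed oriented manifold with the trivial stratification (i.e.\ no singular strata at all), then $I^{\bar p}H_*(Z;\Q)=H_*(Z;\Q)=I^{\bar q}H_*(Z;\Q)$ for every perversity, and the inclusion-induced comparison map $(i_{\bar p\to\bar q})_*$ is the identity in every degree. In particular both hypotheses are satisfied, so the preceding argument applies. No serious obstacle is expected; the only point to be a little careful about is checking that the chain-level inclusion $I^{\bar p}C_*(Z;\Q)\hookrightarrow I^{\bar q}C_*(Z;\Q)$ really does induce the connecting homomorphism $d$ of the $\bar q/\bar p$ sequence that appears in the hypothesis (this is immediate from how $d$ was described in Section \ref{S: pairing} as sending a relative cycle to its boundary).
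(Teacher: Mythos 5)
Your argument is correct and is essentially the paper's own proof: both use the long exact sequence relating $I^{\bar p}H_*(Z;\Q)$, $I^{\bar q}H_*(Z;\Q)$, and $I^{\bar q/\bar p}H_*(Z;\Q)$ to conclude from the surjectivity and injectivity hypotheses that $V=I^{\bar q/\bar p}H_{2n}(Z;\Q)=0$, so the Maslov term in Theorem \ref{T: Wall} vanishes. Your treatment of the trivially stratified manifold case is also the intended one, so nothing further is needed.
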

\begin{proof}
In this case, $I^{\bar q/\bar p}H_{2n}(Z;\Q)=0$ by the long exact sequence relating $\bar p$ and $\bar q$ intersection homology. Thus $V$ and hence $\sigma(V;A,B,C)$ are trivial. 
\end{proof}
From this, we can recover Siegel's theorem regarding Novikov additivity of Witt spaces \cite{Si83}. Indeed, when $X,Y_i,Z$ are all Witt-spaces, $I^{\bar m}H_*\cong I^{\bar n}H_*$ for each, and thus $\sigma_{\bar m\to\bar n}(X)=\sigma_{\bar m\onto\bar n}(Y_1)+\sigma_{\bar m\onto\bar n}(Y_2)$. These signatures $\sigma_{\bar m\to\bar n}(X)$ and $\sigma_{\bar m\to\bar n}(Y_1)$ are just the signatures of the middle-perversity middle-dimension intersection pairings on these Witt spaces \cite{Si83}. 

A weak form of the cobordism invariance also follows from Corollary \ref{C: novikov}.
Let $\Sigma_X$ denote the union of singular strata of the stratified pseudomanifold $X$, and let $N(\Sigma_X)$ denote the (closed) regular neighborhood of $\Sigma_X$ in $X$. 
Let $W$ be a compact $\bd$-stratified pseudomanifold whose stratified boundary is the disjoint union $X\amalg -Y$. Suppose further that $N(\Sigma_X)\cong N(\Sigma_Y)$ and that $N(\Sigma_W)\cong N(\Sigma_X)\times I$ with $N(\Sigma_X)\times 1$ identified with $N(\Sigma_X)$ and  $N(\Sigma_X)\times 0$ identified with $N(\Sigma_Y)$. We will refer to such a $W$ as a bordism rel $\Sigma$ from $X$ to $Y$ and say that $X$ and $Y$ are cobordant rel $\Sigma$.

\begin{proposition}
If $X$ and $Y$ are s-closed $4n$-dimensional stratified pseudomanifolds  that are cobordant rel $\Sigma$, then $\sigma_{\bar p\to \bar q}(X)=\sigma_{\bar p\to \bar q}(Y)$. 
\end{proposition}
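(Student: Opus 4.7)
The plan is to use Novikov additivity (Corollary \ref{C: novikov}) to peel off the singular part of each of $X$ and $Y$, reducing the statement to classical cobordism invariance of the signature for manifolds with boundary.

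First I would decompose $X = N(\Sigma_X) \cup_{P} V_X$, where $V_X := \overline{X \setminus N(\Sigma_X)}$ and $P := \bd N(\Sigma_X)$, and similarly $Y = N(\Sigma_Y) \cup_{P} V_Y$; the hypothesis $N(\Sigma_X)\cong N(\Sigma_Y)$ identifies the two boundaries, which we call $P$. Because $V_X$ and $V_Y$ lie entirely in the top (regular) stratum, they are trivially stratified $\bd$-manifolds with manifold boundary $P$, and $P$ itself is a closed manifold. In particular $I^{\bar p}H_*(P;\Q)\to I^{\bar q}H_*(P;\Q)$ is an isomorphism, so the Maslov correction in Theorem \ref{T: Wall} vanishes, and Corollary \ref{C: novikov} gives
\[
\sigma_{\bar p\to\bar q}(X)=\sigma_{\bar p\onto\bar q}(N(\Sigma_X))+\sigma_{\bar p\onto\bar q}(V_X),
\]
with the analogous identity for $Y$. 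The stratified, orientation-preserving homeomorphism $N(\Sigma_X)\cong N(\Sigma_Y)$ implicit in the definition of a bordism rel $\Sigma$ forces $\sigma_{\bar p\onto\bar q}(N(\Sigma_X))=\sigma_{\bar p\onto\bar q}(N(\Sigma_Y))$. Since $V_X$ and $V_Y$ have trivial stratification, their perverse signatures agree with the classical Wall signatures $\sigma(V_X)$ and $\sigma(V_Y)$, and so the problem reduces to showing $\sigma(V_X)=\sigma(V_Y)$.

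For this last step I would introduce $V_W:=\overline{W\setminus N(\Sigma_W)}$. The product decomposition $N(\Sigma_W)\cong N(\Sigma_X)\times I$ exhibits $V_W$ as a compact $(4n+1)$-dimensional manifold with corners whose boundary, after smoothing the corners along $P\times\{0,1\}$, is the closed manifold $V_X\cup_{P}(-V_Y)$. Hence $V_X\cup_{P}(-V_Y)$ bounds a compact manifold and has signature zero by classical cobordism invariance. Combined with classical Novikov additivity (Theorem \ref{th:novikov}), this yields $0=\sigma(V_X\cup_{P}(-V_Y))=\sigma(V_X)-\sigma(V_Y)$, completing the proof.

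The main obstacle is verifying cleanly that the splitting $X=N(\Sigma_X)\cup_{P} V_X$ fits the hypotheses of Corollary \ref{C: novikov} — namely, that $P$ is bicollared in $X$ and meets no singular strata, both of which follow from the regular-neighborhood construction — together with the identification of $\bd V_W$ with $V_X\cup_{P}(-V_Y)$ after corner smoothing, which is where the product structure $N(\Sigma_W)\cong N(\Sigma_X)\times I$ (i.e.\ the ``rel $\Sigma$'' hypothesis) is essential. This rigidity near $\Sigma$ is also presumably what the word ``weak'' in the proposition is meant to convey, since dropping it would force one to control genuinely singular cobordisms.
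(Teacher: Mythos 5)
Your proposal is correct and follows essentially the same route as the paper: split off the regular neighborhood of the singular set via Corollary \ref{C: novikov}, cancel the $N(\Sigma_X)\cong N(\Sigma_Y)$ contributions, and reduce to classical bordism invariance plus Novikov additivity applied to $\bd\bigl(W-\text{int}(N(\Sigma_W))\bigr)\cong V_X\cup_P(-V_Y)$. The only cosmetic difference is that the paper absorbs the collar $P\times I$ directly rather than speaking of corner smoothing, and the word ``weak'' refers to the rigidity of the cobordism near $\Sigma$, exactly as you surmise.
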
 
\begin{proof}
The pseudomanifolds $X$ and $Y$ can be decomposed, respectively, as $M\cup_{\bd M} N(\Sigma_X)$ and $M'\cup_{\bd M'} N(\Sigma_Y)$, where $M$ and $M'$ are manifolds. Thus by Corollary \ref{C: novikov}, $\sigma_{\bar p\to \bar q}(X)=\sigma(M)+\sigma_{\bar p\onto \bar q}(N(\Sigma_X))$ and $\sigma_{\bar p\to \bar q}(Y)=\sigma(M')+\sigma_{\bar p\onto \bar q}(N(\Sigma_Y))=\sigma(M')+\sigma_{\bar p\onto \bar q}(N(\Sigma_X))$. Thus is suffices to show $\sigma(M)=\sigma(M')$. But $\bd (W-\text{int}(N(\Sigma_W)))=M\cup_{\bd M} (\bd M\times I)\cup_{-\bd M'} -M'\cong M\cup_{\bd M} -M'$. Thus $0=\sigma(M\cup_{\bd M} -M')=\sigma(M)-\sigma(M')$, by ordinary Novikov additivity and the bordism invariance of manifold signatures. 
\end{proof}

The resulting cobordism group is infinite dimensional since each possible boundary neighborhood yields at least
one distinct cobordism class. Because it does not permit cobordisms that change
a neighborhood of the singular stratum, it is not really in the same vein as the 
cobordism invariants known for manifolds, Witt spaces,
and Banagl non-Witt spaces, which play important roles in the signature packages in
those settings.
We are hopeful, however, that it may be possible in the future to define a set of spaces
for which various perverse signatures satisfy a better cobordism invariance.
 
 The next tool is a version of the standard multiplicativity of signatures.

\begin{lemma}\label{mult}
Suppose $Y$ is an s-closed oriented $4k$-dimensional pseudomanifold and that $N$ is a closed oriented $4n$-dimensional manifold. Then for perversities $\bar p\leq \bar q$, $\bar p+\bar q=\bar t$, we have $\sigma_{\bar p\to \bar q}(N\times Y)=\sigma(N)\sigma_{\bar p\to \bar q}(Y)$. 
\end{lemma}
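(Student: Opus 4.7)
My plan is to reduce the statement to the classical multiplicativity of signatures for tensor products of symmetric bilinear forms. The central tool will be the intersection homology K\"unneth theorem for a product with an unstratified manifold, which is mentioned earlier in the paper as carrying over from the standard proofs: for any perversity $\bar r$, the external product induces an isomorphism
$$I^{\bar r}H_*(N\times Y;\Q)\cong H_*(N;\Q)\otimes I^{\bar r}H_*(Y;\Q).$$
Since $\Q$ is a field and this isomorphism is natural with respect to the perversity comparison map $I^{\bar p}C_*\to I^{\bar q}C_*$, taking images commutes with the tensor decomposition, giving
$$I^{\bar p\to\bar q}H_{2(n+k)}(N\times Y;\Q)\cong \bigoplus_{i+j=2(n+k)}H_i(N;\Q)\otimes I^{\bar p\to\bar q}H_j(Y;\Q).$$

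Next I would verify that, under this decomposition, the pairing $\pfc$ on the left-hand side is the tensor product (up to a sign depending only on bidegree) of the classical intersection pairing on $H_*(N;\Q)$ and the pairing $\pfc$ on $I^{\bar p\to\bar q}H_*(Y;\Q)$. The idea is to represent classes by external products $\alpha\times\beta$ and $\alpha'\times\beta'$ with $\alpha,\alpha'$ in general position in $N$ and $\beta,\beta'$ in stratified general position in $Y$ (with $\beta$ taken $\bar p$-allowable and $\beta'$ taken $\bar q$-allowable). The chain-level identity $(\alpha\times\beta)\pfa(\alpha'\times\beta')=\pm(\alpha\pfa\alpha')\times(\beta\pfa\beta')$, combined with the fact that the augmentation $I^{\bar t}H_0(N\times Y;\Q)\to \Q$ factors through the K\"unneth isomorphism as the product of augmentations on the two factors, yields the claim.

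With the identification of pairings in hand, the result follows from the classical signature argument for graded tensor products. Set $U_i:=H_i(N;\Q)\otimes I^{\bar p\to\bar q}H_{2(n+k)-i}(Y;\Q)$. The augmented intersection of classes from $U_i$ and $U_{i'}$ vanishes for purely grading reasons unless $i+i'=4n$, since otherwise the pairing lands outside $H_0(N;\Q)\otimes I^{\bar t}H_0(Y;\Q)$. For $i=2n$ the summand carries the tensor product of two symmetric nonsingular forms and has signature $\sigma(N)\sigma_{\bar p\to\bar q}(Y)$ by the elementary identity $\sigma(U\otimes V)=\sigma(U)\sigma(V)$ for finite-dimensional symmetric forms. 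For $i\neq 2n$ each $U_i$ is isotropic, while by Poincar\'e duality on $N$ together with the duality for $I^{\bar p\to\bar q}H_*(Y;\Q)$ built from the pairing following \eqref{A: int}, the pairing $U_i\otimes U_{4n-i}\to\Q$ is nonsingular; hence $U_i\oplus U_{4n-i}$ is a hyperbolic block (in the sense of the self-annihilating subspace footnote in the proof of Theorem \ref{T: Wall}) and contributes $0$ to the signature. Summing these pairs over $i<2n$ gives zero net contribution.

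The main obstacle I anticipate is the second step: making precise the compatibility of the chain-level intersection product with the K\"unneth cross product in the PL stratified-coefficient setting. Both operations are defined only after a choice of general-position representatives (for the intersection product) or of an explicit chain-level cross product (for K\"unneth), and one must show that representatives can be chosen so that both are simultaneously defined, with the correct signs, and descend to the required identification of pairings on intersection homology. Once this compatibility is established, the remainder of the argument is pure linear algebra.
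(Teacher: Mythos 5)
Your proposal follows essentially the same route as the paper's proof: the K\"unneth theorem for a product with an unstratified manifold plus naturality gives $I^{\bar p\to\bar q}H_*(N\times Y;\Q)\cong H_*(N;\Q)\otimes I^{\bar p\to\bar q}H_*(Y;\Q)$, and the classical Hirzebruch/Chern--Hirzebruch--Serre tensor-product signature argument, with stratified general position handling the compatibility of the intersection pairing with the cross product, finishes the job. You have merely written out in more detail the linear-algebra step that the paper dispatches by citation.
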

\begin{proof}
By the K\"unneth theorem for intersection homology in which one term is a manifold (see \cite{Ki}, which extends to more general perversities and stratified coefficients), $I^{\bar p}H_*(N\times Y;\Q)\cong  H_*(M;\Q)\otimes I^{\bar p}H_*(Y;\Q)$, and similarly for $\bar q$. Thus, by the naturality of the K\"unneth theorem, $I^{\bar p\to \bar q}H_*(N\times Y;\Q)\cong H_*(N;\Q)\otimes I^{\bar p\to \bar q}H_*(Y;\Q)$. The lemma now follows just as it does for manifolds (e.g. \cite{Hirz, CHS57}), using stratified general position arguments to see that the intersection pairing of the product behaves as one expects. 
\end{proof}

This allows us to construct a nontrivial example of a perverse signature that is neither a Witt signature nor an example of one of Banagl's non-Witt signatures.

\begin{example}(A nontrivial perverse signature of a space that is neither Witt nor non-Witt).  
Suppose $W$ is a compact oriented $4k$-dimensional $\Q$-Witt space with non-zero Witt signature. Let $M$ be a $4m$-dimensional connected compact oriented PL $\bd$-manifold with non-empty  boundary $\bd M$. Consider the space $X= M\times W\cup_{\bd M\times W} \bd M\times \bar cW$, i.e. the space obtained from $M\times W$ by coning off the stratified boundary fiberwise.  Then $X$ is not a Witt space, as $W$ is the link of the stratum  $\bd M\times v$, where $v$ is the cone point of the closed cone $\bar c W$, and by assumption, $W$ has non-vanishing middle-dimensional middle-perversity intersection homology. Furthermore, because the signature of $W$ does not vanish, $X$ cannot be a Banagl ``non-Witt'' space. Nonetheless, the signature $\sigma_{\bar m\to\bar n}(X)$ is defined, and  we will show that if $\bd M$ is PL homeomorphic to $S^{4m-1}$,  then $\sigma_{\bar m\to\bar n}(X)=\sigma(M)\sigma(W)$, where $\sigma(M)$ is the usual manifold signature of $M$ and $\sigma(W)=\sigma_{\bar m\to \bar n}(W)$ is the Witt signature of $W$.  By choosing appropriate $M$ and $W$, we can of course arrange for this
to be nontrivial.

Let $\hat M$ be the closed manifold $M\cup_{S^{4m-1}}D^{4m}$. By Lemma \ref{mult},  $\sigma_{\bar m\to\bar n}(\hat M\times W)=\sigma(M)\sigma_{\bar m\to \bar n}(W)=\sigma(M)\sigma(W)$, the last equality because $W$ is Witt. 
Notice that $S^{4m-1}\times  W$ is also a Witt space and so $I^{\bar m}H_*(W;\Q)\cong I^{\bar n}H_*(W;\Q)$. Thus, by Corollary \ref{C: novikov}, $\sigma_{\bar m\to\bar n}(\hat M\times W)=\sigma_{\bar m\onto\bar n}(M\times W)+\sigma_{\bar m\onto\bar n}(D^{4m}\times W)$. But $D^{4m}\times W$ possesses an orientation-reversing self-homeomorphism, so $\sigma_{\bar m\onto\bar n}(D^{4m}\times W)=0$, and 
$\sigma_{\bar m\onto\bar n}(M\times W)$ is the Witt signature $\sigma_{\bar m\onto\bar n}(M\times W)=\sigma(M\times W)$. Thus $\sigma(M\times W)=\sigma(M)\sigma(W)$. 

Returning now to our space $X$, obtained by coning off the stratified boundary of $M\times W$ fiberwise, we see by a second application of Corollary \ref{C: novikov} that $\sigma_{\bar m\to\bar n}(X)=\sigma(M\times W) + \sigma_{\bar m\to \bar n}(S^{4n-1}\times \bar cW)$. But $S^{4n-1}\times \bar cW$ again possesses an orientation-reversing self-homeomorphism, so its perverse signature is $0$. Putting the preceding arguments together, we obtain 
$$
\sigma_{\bar m\to\bar n}(X)=\sigma(M\times W)=\sigma(M)\sigma(W).$$
\hfill\qedsymbol
\end{example}

The next examples are similar to results for standard signatures.

\begin{example}(Pseudomanifolds with involutions).
If $(Y_1,Z)$ is homeomorphic to $(-Y_2,-Z)$ rel $Z$ (i.e. by an isomorphism that fixes $Z$ pointwise), then $\sigma_{\bar p\to\bar q}(X)=0$.
We can see this as follows.  Since $Y_1\cong -Y_2$,  their signatures are the negatives of each other. In addition, with the hypotheses, it is clear that the inclusions $I^{\bar q/\bar p}H_{2n}(Z;\Q)\to I^{\bar q/\bar p}H_{2n}(Y_1;\Q)$ and $I^{\bar q/\bar p}H_{2n}(-Z;\Q)\to I^{\bar q/\bar p}H_{2n}(Y_2;\Q)$ are isomorphic maps with identical kernels (this is why we require $Z$ to be fixed by the homeomorphism). Therefore $A=C$. But an odd permutation of the subspaces of $A,B,C$ alters $\sigma(V;A,B,C)$ by a sign. Hence $\sigma(V;A,B,C)=0$. 
\hfill\qedsymbol
\end{example}
\begin{example}\label{C: suspension}(Suspensions).
If $X$ is the suspension of the s-closed stratified pseudomanifold $Z$, then $\sigma_{\bar p\to\bar q}(X)=0$.  This follows from the preceeding example by taking $Y_1$ and $Y_2$ to be the two cones on $Z$.

This example can also be obtained with less machinery by observing that if $\bar p\leq \bar q$ and $\bar p+\bar q=\bar t$, then in fact $\bar p\leq \bar m\leq \bar n\leq \bar q$, where $\bar m,\bar n$ are the lower- and upper-middle perversities. Thus $I^{\bar p}H_*(X;\Q)\to I^{\bar p}H_*(X;\Q)$ factors through $I^{\bar m}H_*(X;\Q)$ and $I^{\bar n}H_*(X;\Q)$. But for a $4k$-dimensional suspension $X$, $I^{\bar m}H_{2k}(X;\Q)=I^{\bar n}H_{2k}(X;\Q)=0$. \hfill\qedsymbol
\end{example}

The next lemma is a vanishing theorem for perverse signatures of cones.

\begin{lemma}[Perverse signatures of cones]
Let $\bar p\leq \bar q$, $\bar p+\bar q=\bar t$, and suppose $Y$ is an s-closed, oriented $4n-1$ dimensional pseudomanifold with closed cone $\bar cY$. Then $\sigma_{\bar p\onto \bar q}(\bar cY)=0$.
\end{lemma}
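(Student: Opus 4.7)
My plan is to prove the stronger statement that $I^{\bar p\onto\bar q}H_{2n}(\bar cY, Y; \Q) = 0$, so that the pairing $\pfc$ whose signature we compute lives on a zero-dimensional vector space. By definition, this group is the image of the composition
$$I^{\bar p}H_{2n}(\bar cY; \Q) \to I^{\bar q}H_{2n}(\bar cY; \Q) \to I^{\bar q}H_{2n}(\bar cY, Y; \Q),$$
so it suffices to verify that at least one of the two intersection homology groups $I^{\bar p}H_{2n}(\bar cY; \Q)$ or $I^{\bar q}H_{2n}(\bar cY; \Q)$ is zero.

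The main computational input will be the cone formula for intersection homology with stratified coefficients (see \cite{GBF23, GBF26}). First I would identify $\bar cY$ with the open cone $cY$ up to stratum-preserving homotopy equivalence (shrink the cone parameter $[0,1]$ into $[0,1/2]$; since the only singular stratum is the cone vertex $v$ and the boundary $Y$ is not a singular stratum of $\bar cY$, this homotopy preserves strata). The cone formula then asserts that for any perversity $\bar r$ on the $4n$-dimensional cone $cY$,
$$I^{\bar r}H_{2n}(cY; \Q) = 0 \quad \text{whenever } \bar r(v) \geq 2n-1,$$
since the vanishing threshold is $i \geq 4n-1-\bar r(v)$.

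The remaining step is a short numerical case analysis coming from the duality constraint. Since $\bar p + \bar q = \bar t$, we have $\bar p(v) + \bar q(v) = 4n-2$, and $\bar p \leq \bar q$ forces $\bar p(v) \leq 2n-1 \leq \bar q(v)$. If $\bar p(v) = 2n-1$ the cone formula gives $I^{\bar p}H_{2n}(\bar cY;\Q)=0$; otherwise $\bar p(v)\leq 2n-2$, which forces $\bar q(v)\geq 2n$ and hence $I^{\bar q}H_{2n}(\bar cY;\Q)=0$. Either way, the composition above factors through a zero group, so its image is zero and the signature of the pairing must vanish. The only step requiring any care is the passage from closed to open cone, which depends on correctly identifying $Y$ as a $\bd$-pseudomanifold boundary of $\bar cY$ rather than a singular stratum; once this is in hand, the result is essentially a direct application of the cone formula together with the arithmetic of $\bar p + \bar q = \bar t$.
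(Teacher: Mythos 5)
Your proposal is correct, but it gets to the vanishing by a different mechanism than the paper. The paper's proof also starts from the cone formula, but uses its \emph{non-vanishing} half: for any perversity $\bar r$, $I^{\bar r}H_{2n}(\bar cY;\Q)$ is either $0$ or isomorphic to $I^{\bar r}H_{2n}(Y;\Q)$ via inclusion, so every class in $I^{\bar p}H_{2n}(\bar cY;\Q)$ is represented by a cycle supported in $Y$ and therefore dies in the relative group $I^{\bar q}H_{2n}(\bar cY,Y;\Q)$; hence $I^{\bar p\onto\bar q}H_{2n}(\bar cY,Y;\Q)=0$ with no arithmetic on perversities at all, and in particular without using $\bar p+\bar q=\bar t$ or $\bar p\leq\bar q$. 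You instead invoke the \emph{vanishing} half of the cone formula together with the duality constraint: since $\bar p(v)+\bar q(v)=4n-2$ and $\bar p(v)\leq\bar q(v)$, one always has $\bar q(v)\geq 2n-1$, so in fact $I^{\bar q}H_{2n}(\bar cY;\Q)=0$ in every case and your case split on $\bar p(v)$ is not even needed; either way the composition defining $I^{\bar p\onto\bar q}H_{2n}(\bar cY,Y;\Q)$ factors through a zero group. Both arguments are valid; the paper's is slightly more robust (it proves the same conclusion for arbitrary $\bar p\leq\bar q$, not just dual pairs), while yours is a clean one-line numerical check once the cone formula threshold is in hand. One small inaccuracy to fix: the cone vertex is \emph{not} the only singular stratum of $\bar cY$ in general, since each singular stratum $Z$ of $Y$ contributes a singular stratum $(0,1]\times Z$ of the cone; this does not affect your argument, because the radial shrinking is stratum-preserving regardless and the cone-formula truncation threshold depends only on $\bar r(v)$, but the justification should be stated that way rather than by asserting that $v$ is the unique singular stratum.
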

\begin{proof}
Notice that $\bar cY$ is a $\bd$-stratified pseudomanifold with  $\bd (\bar cY)=Y$. By the standard cone formula for intersection homology, for any perversity $\bar r$, 
$I^{\bar r}H_*(\bar cY;\Q)$ is either $0$ or isomorphic to $I^{\bar r}H_{2n}(Y;\Q)$, with the isomorphism determined by inclusion $Y\into \bar cY$. Thus  $I^{\bar p\onto \bar q}H_{2n}(\bar cY, Y;\Q)=0$, because any possible non-zero element $[x]\in I^{\bar p}H_{2n}(\bar cY;\Q)$ can be written with the support of $x$ in $Y$, and so the image of $[x]$ is $0$ in $I^{\bar q}H_{2n}(\bar cY, Y;\Q)$.  Thus certainly $\sigma_{\bar p\onto \bar q}(\bar cY)=0$.
\end{proof}

We use this lemma in the next example together with an explicit computation of the Maslov index term of Theorem \ref{T: Wall} to show that coning off a stratified boundary does not change perverse signatures, i.e.
that $\sigma_{\bar p\to \bar q}(X\cup_{\bd X}\bar c(\bd X))=\sigma_{\bar p\onto \bar q}(X)$. Here, $X$ may be neither Witt nor Banagl non-Witt. 

\begin{example}(Coning boundaries).
Suppose $\hat X$ is an s-closed oriented $4n$-pseudomanifold of the form $\hat X= X\cup_{\bd X}\bar c(\bd X)$. Let $\bar p\leq \bar q$, $\bar p+\bar q=\bar t$ be two perversities on $\hat X$. Then $\sigma_{\bar p\to \bar q}(\hat X)=\sigma_{\bar p\onto \bar q}(X)$. 

To show this, we apply Theorem \ref{T: Wall}. By the preceding lemma, $\sigma_{\bar p\onto \bar q}(\bar c(\bd X))=0$, so we need only show that the Maslov index term vanishes. 
Consider $\ker(d: I^{\bar q/\bar p}H_{2n}(\bd X;\Q)\to  I^{\bar p}H_{2n-1}(\bd X;\Q))$, which is the group $B$ in the index term. By the exact sequence \eqref{E: LES}, $B=\im (I^{\bar q}H_{2n}(\bd X;\Q)\to I^{\bar q/\bar p}H_{2n}(\bd X;\Q))$, and so every class $[x]\in B$ can be represented by a $\bar q$-allowable $2n$-cycle in $\bd X$. Consider now $(i_{\bd X\subset \bar c(\bd X)})_*[x]\in I^{\bar q/\bar p}H_{2n}(\bar c(\bd X);\Q)$. This is also represented by the same chain $x$. 
In $\bar c(\bd X)$, we have that $\bd (\bar cx)=\pm x$, so if we can show $\bar cx$ is 
$\bar q$ allowable, then we have $(i_{\bd X\subset \bar c(\bd X)})_*[x]=0$, which implies $B\subset A$, where $A=\ker((i_{\bd X\subset \bar c(\bd X)})_*:I^{\bar q/\bar p}H_{2n}(\bd X;\Q)\to I^{\bar q/\bar p}H_{2n}(\bar c(\bd X);\Q))$. Thus $B\cap (A+C)=B\cap A$, and so $W=\frac{B\cap (A+C)}{B\cap A+B\cap C}=0$, which implies that the Maslov index term must vanish. 

To show $\bar cx$ is $\bar q$-allowable, first note that the conditions on  $\bar p,\bar q$ imply that $\bar q\geq \bar n$, where $\bar n$ is the upper middle
perversity. For any simplex of $\bar cx$, we only need to check allowability at the cone vertex $v$ (the allowability of $\bar c x$ otherwise comes for free - see the arguments in \cite{GBF10}). For simplices $\sigma$ of $\bar cx$ that intersect the cone vertex, we know that $\sigma^{-1}(v)$ is in the $0$-skeleton of the model $\Delta^{2n+1}$. So by definition of allowability, we only need to check that $0\leq 2n+1-4n+\bar q(v)=1-2n+\bar q(v)$. But $\bar q(v)\geq \bar n(v)=2n-1$. So $1-2n+\bar q(v)\geq 0$, and $\bar q$-allowability is confirmed. 
\hfill\qedsymbol
\end{example}

It is somewhat unsatisfying that the Maslov index in the previous example is trivial, so we would also like to show that it is not always.  The following example does this.

\begin{example}(A nontrivial Maslov index).
Let $\mf D$ be the unit tangent disk bundle over $S^{2n}$. Let $N=[-1,1]\times S^{2n-1}$ be a neighborhood of the equator of $S^{2n}$. The restriction of $\mf D$ over $N$ is a trivial disk bundle $ [-1,1]\times S^{2n-1}\times B^{2n}$, where $B^{2n}$ is the $2n$-disk. Now, for each $t\in[-1,1]$, to $t\times S^{n-1}\times B^{2n}$ we adjoin the cone on $t\times S^{n-1}\times  \bd B^{2n}$. In other words, we form $W=\mf D\cup_{[-1,1]\times S^{2n-1}\times \bd  B^{2n}} ([-1,1] \times \bar c(S^{2n-1}\times  \bd  B^{2n}))$. Another way to say this is that $W$ is the union of two spaces, one of which is the product of $[-1,1]$ with the Thom space of the trivial $\R^{2n}$-bundle over $S^{2n-1}$ and the other of which consists of  the tangent disk bundles over the caps $S^{2n}-([-1,1]\times S^{2n-1})$. Next, we note that $W$ has a boundary consisting of two pieces. One boundary piece  is the union of the boundary of the tangent disk bundle over the top cap of $S^{2n-1}$ with the cone on $1\times S^{2n-1}\times \bd B^{2n}$, and the other consists of the union of the tangent disk bundle over the bottom cap with the cone over $-1\times S^{2n-1}\times \bd B^{2n}$. Let $X$ be the union of $W$ with two cones, one on each boundary piece. Then $X$ is a normal compact pseudomanifold of dimension $4n$. It can be stratified by $X^{2n}\supset X^1\supset X^0$. The $0$-stratum $X^0$ consists of the cone vertices of the last two cones adjoined in the formation of $X$. The $1$-stratum $X^1$ consists of the union of $[-1,1]\times v$, where $v$ is the cone vertex of the cone on $S^{2n-1}\times \bd B^{2n}$, with its extension into the capping cones. 
$X$ can be oriented with an orientation consistent with one chosen on $\mf D$. In fact, for $n>1$, $X$ and $X-X^1$ will be simply connected. 

Let $\bar p$ be the $0$ perversity, and let $\bar q$ be the top perversity. To compute $I^{\bar p}H_{2n}(X)$, we recall that  a PL $i$-chain will be $\bar p$ allowable with respect to the stratum $X^{4n-k}$ only if its intersection with that stratum has dimension $\leq i-k+\bar p(k)$ (and similarly for the boundary). In this case, the relevant $i$ will be $2n$ or $2n+1$ and $k$ will be $4n$ or $4n-1$. With $\bar p$ being the $0$ perversity, the implication is that, if $n$ is sufficiently large, no chains of dimension near $2n$ will be able to intersection the singular strata. Thus $I^{\bar p}H_{2n}(X)\cong H_{2n}(X-X^1)$. But $X-X^1$ is easily seen from the construction to retract back to $\mf D$, which retracts to $S^{2n}$ itself. So $I^{\bar p}H_{2n}(X)\cong \Q$. On the other hand, to compute $I^{\bar q}H_{2n}(X)$, we recall that $\bar q(4n-1)=\bar t(4n-1)=4n-3$ and $\bar q(4n)=4n-2$. For large $n$, we see that all chains in degrees near $2n$ will be completely allowable (since the dimensions of their intersection with $X^1$ and $X^0$ cannot exceed $1$), and so $I^{\bar q}H_{2n}(X)=H_{2n}(X)$. Since $X^1$ is contractible, this is isomorphic to $H_{2n}(X, X^1)$, which, furthermore, by homotopy equivalence and excision, is isomorphic to $H_{2n}(X,X-S^{2n})\cong H_{2n}(\mf D, \mf D-S^{2n})$. This is just the homology of the Thom space. So, the inclusion $I^{\bar p}H_{2n}(X)\to I^{\bar q}H_{2n}(X)$ corresponds to the inclusion of $H_{2n}(S^{2n})$ into the Thom space of its tangent bundle. Here it is well known that the intersection of the generator of  $H_{2n}(S^{2n})$ with its image in the homology of the Thom space will be represented by the euler number of $S^{2n}$ in $H_0(S^{2n})=\Q$. For an even dimensional sphere this number is $2$. Hence the perverse $\bar p,\bar q$ signature of $X$ is $1$. 

Now, let us decompose $X$ into two pieces along the codimension $1$ sub-pseudomanifold $Y=0\times (S^{2n-1}\times B^{2n})\cup_{0\times S^{2n-1}\times \bd B^{2n}} \bar c(S^{2n-1}\times \bd B^{2n})$. This decomposes $X$ into two identical pieces, say $Z$ and $Z'$, each constructed over one hemisphere of $S^{2n}$. We let $Z,Z',Y$ inherit their stratifications (and perversities) from $X$. Now, consider $I^{\bar p}H_{2n}(Z)$. By the same arguments as above, $I^{\bar p}H_{2n}(Z)\cong H_{2n}(Z-Z\cap X^1)$. But $Z-Z\cap X^1$ retracts to the piece of $\mf D$ over the hemisphere of $S^{2n}$, which retracts to that hemisphere, itself. So $I^{\bar p}H_{2n}(Z)=0$, and the perverse signature of each piece must vanish.

We thus see that the Maslov index term for the given decomposition of $X$ must be non-zero. (Alternatively, it would have been sufficient to note that the signature of $X$ is $1$, which is odd, but that by symmetry $Z$ and $Z'$ must have identical signatures  mod $2$.) \hfill\qedsymbol
\end{example}

Our final example relates the Maslov index terms in the non-additivity formula
to the $\tau$ and $\tau_i$ invariants defined for fiber bundles in \cite{Dai} and \cite{Hun07}.
These measure what Dai calls non-multiplicativity of the signature, and they relate in analysis  to the pairings on certain 
noncompact manifolds of  harmonic $L^2$ forms that are exact but that are not $d$ of any $L^2$ form.

\begin{example}(Bundles and $\tau$ invariants).
Let $Y$ be the total space of a compact fiber bundle $F \hookrightarrow  Y \rightarrow B$.
Assume that $Y$ is $4k-1$ dimensional.  Form $X$ by coning off the fibers of $Y$. 
Then $X$ is a stratified pseudomanifold with one singular stratum homeomorphic 
to $B$ and of codimension $f+1$, where ${\rm dim}(F) = f$.
Thus only the values of perversities at codimension $f+1$ are relevant.
Assume  $p=\bar p(f+1) = \bar m(f+1) - j$ for some non-negative integer $j$,  where $\bar m$ denotes
the lower middle perversity.  Let $\bar q$ be the dual perversity to $\bar p$.

In the language of \cite{Hun07}, which uses cohomological indexing and notation, our perverse signature defined  on $\im(I^{\bar p}H_{2k}(X;\R)\to   I^{\bar q}H_{2k}(X,\bd X;\R))$ instead appears as a signature of a pairing on  $\im(IH^{2k}_{\bar p,0}(X,Y)\to   IH^{2k}_{\bar q}(X))$. The groups
$IH^{2k}_{\bar p,0}(X,Y)$ and $IH^{2k}_{\bar q}(X)$ are computed\footnotemark as hypercohomology groups of complexes of appropriately defined $L^2$ forms on the regular part of $X$, and the pairing is defined by integrating the exterior product of forms over the regular part of $X$. 
\footnotetext{For the purposes of comparison, we note that if $\hat X\cong X\cup_Y\bar cY$ and $v$ denotes the cone vertex, then  $IH^{2k}_{\bar p,0}(X,Y)$  corresponds to the hypercohomology of the Deligne sheaf on $\hat X$ with perversity value $p$ on $B$ and $-1$ on $v$, while $IH^{2k}_{\bar q}(X)$ corresponds to the hypercohomology of the Deligne sheaf on $\hat X$ with the dual perversity values.}

Then from \cite{Hun07}, we know that the perverse signatures of $X$ are 
calculated by:
\[
\sigma_{\overline{p}\onto\overline{q}}(X)=  \sum_{i=2+2j}^{\infty} \tau_i,
\]
where $\tau_i$ is calculated from the $i$th pages of the $\bar p$ and $\bar q$ truncated 
Leray spectral sequences for the cohomology of the fiber bundle $Y$ as the signature of the form:
\[
\begin{array}{llll}
\sigma_i: &  E^p_i \otimes E^q_i & \longrightarrow & \mathbb{R},\\
                  & \phi \otimes \psi & \longrightarrow & (\phi \cdot d_i \psi, \beta_i ),
\end{array}                 
\]
\noindent where $\beta_i$ is the volume element on the $i$th page.

On the other hand, we can decompose $B$ into an arbitrary number of 
contractible polygons, $P_j$, and lift this decomposition to a decomposition of $X$ as an 
arbitrary number of pieces of the form $\overline{c}F \times P_j$.  
But the perverse signatures on such pieces are trivial:

\begin{lemma}
Let $B$ be a closed euclidean ball, $F$ a compact pseudomanifold, and $W=B\times \bar cF$. 
Then $\sigma_{\bar p\onto \bar q}(W)=0$.
\end{lemma}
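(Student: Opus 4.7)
The plan is to prove the stronger statement that $I^{\bar p\onto\bar q}H_{2n}(W,\bd W;\Q)=0$, which forces $\sigma_{\bar p\onto\bar q}(W)=0$ immediately. Write $\dim W=4n$ and $\dim B=b$. The case $b=0$ reduces to $W=\bar cF$, already handled by the preceding lemma on perverse signatures of cones, so I assume $b\geq 1$; then $\partial B$ is a nonempty sphere and $\partial B\times \bar cF\subset \bd W$.

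The main idea is to produce a stratum-preserving deformation of $W$ onto a subset of $\bd W$. Fix any point $b_1\in\partial B$; by PL contractibility of $B$, choose a PL deformation $\{h_t\}_{t\in [0,1]}$ of $B$ with $h_0=\mathrm{id}_B$ and $h_1$ the constant map at $b_1$. Setting $H_t := h_t\times \mathrm{id}_{\bar cF}$ gives a homotopy on $W$ from $\mathrm{id}_W$ to a retraction $r\colon W\to \{b_1\}\times\bar cF\subset \bd W$. Because every singular stratum of $W$ has the form $B\times S$ with $S$ a singular stratum of $\bar cF$, and because $H$ moves only the $B$-coordinate, $H$ is stratum-preserving.

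The standard stratum-preserving chain homotopy construction (see \cite{Ki, GBF10}) then yields a chain homotopy $k_*\colon I^{\bar p}C_*(W;\Q)\to I^{\bar p}C_{*+1}(W;\Q)$ satisfying $\partial k_* + k_*\partial = r_* - \mathrm{id}_*$. For any $\bar p$-allowable $2n$-cycle $\eta$ in $W$, one obtains $\eta - r_*\eta = \partial(k_*\eta)$. Since $r_*\eta$ is supported in $\{b_1\}\times \bar cF\subset \bd W$, it represents the zero class in $I^{\bar q}H_{2n}(W,\bd W;\Q)$, so $[\eta]=0$ there as well. Thus the map $I^{\bar p}H_{2n}(W;\Q)\to I^{\bar q}H_{2n}(W,\bd W;\Q)$ is identically zero, and the resulting pairing on its image vanishes trivially.

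The only substantive point to verify is that the homotopy $H$ genuinely induces a chain homotopy on intersection chains; this is the content of stratum-preserving homotopy invariance of intersection homology, and it is where I would expect to spend the most care. If one prefers to avoid invoking that machinery, the same conclusion can be reached entirely at the chain level: every class in $I^{\bar p}H_{2n}(W;\Q)$ has a representative of the form $\{b_0\}\times\xi$ for some $\bar p$-allowable cycle $\xi$ in $\bar cF$ (by the stratum-preserving deformation retraction to $\{b_0\}\times\bar cF$), and then for any PL path $\gamma\subset B$ from $b_0$ to $b_1\in\partial B$, the product chain $\gamma\times\xi$ has boundary $(\{b_1\}-\{b_0\})\times\xi$, giving an explicit null-homology of $\{b_0\}\times\xi$ modulo $\bd W$. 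Allowability of $\gamma\times\xi$ reduces to the elementary dimension estimate $\dim((\gamma\times\xi)\cap(B\times S))=1+\dim(\xi\cap S)\leq (2n+1)-k+\bar p(S)\leq (2n+1)-k+\bar q(S)$ for each singular stratum $B\times S$ of codimension $k$.
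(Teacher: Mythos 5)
Your argument is correct, and like the paper it establishes the stronger fact that $I^{\bar p\onto \bar q}H_{2n}(W,\bd W;\Q)=0$, but by a genuinely different mechanism. You push every $\bar p$-cycle into the $\partial B\times \bar cF$ portion of $\bd W$ by contracting the ball factor to a point $b_1\in\partial B$; this is legitimate because the homotopy moves only the $B$-coordinate, hence carries each stratum $B\times S$ into itself, so the standard stratum-preserving chain homotopy applies (the same homotopy-invariance machinery the paper itself invokes). The paper instead pushes cycles into the \emph{other} part of the boundary, $B\times F$: it uses the stratum-preserving homotopy equivalence $W\simeq \bar cF$ together with the cone formula, which says $I^{\bar p}H_*(\bar cF)$ is either $0$ or represented by chains on $F$, and such chains already lie in $\bd W$; that route needs no case distinction and gives vanishing in all degrees at once. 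Your route avoids the cone formula entirely but requires $\partial B\neq\emptyset$, so the $b=0$ case must be delegated to the preceding lemma on cones, exactly as you do. Your chain-level variant (the product $\gamma\times\xi$ with a path $\gamma$ from $b_0$ to $b_1\in\partial B$) is a correct and pleasantly explicit substitute for the chain homotopy, and the allowability estimate is exactly right since $\codim_W(B\times S)=\codim_{\bar cF}(S)$ under the paper's convention that subspaces inherit the ambient filtration and formal dimension; note only that after passing to the relative group the chain $\{b_1\}\times\xi$ dies because it is an allowable chain supported in $\bd W$, which is precisely how the paper's relative intersection chain complex is defined.
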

\begin{proof}
By stratum-preserving homotopy equivalence, $I^{\bar p}H_*(W;\R)\cong 
I^{\bar p}H_*(cF;\R)$, which is either $0$ or $I^{\bar p}H_*(F;\R)$, 
with any non-zero elements represented by chains on $F$. Such chains 
clearly represent trivial elements in $I^{\bar q}H_*(W,\bd W;\R)\cong 
I^{\bar q}H_*(W,(B\times F)\cup(\bd B\times cF);\R)$. Thus 
$\im(I^{\bar p}H_*(F;\R)\to I^{\bar q}H_*(W,\bd W;\R)=0$ in all degrees.
\end{proof}
Thus either the Maslov indices that arise in decomposing $X$ in this fashion are
nontrivial, or we get a remarkable vanishing of the $\tau_i$ for fiber bundles.  In the
case that the fiber is a sphere, this comes back to Wall non-additivity of the signature
for manifolds with boundary, which is of course generally nontrivial.  For example,
in the case of the Hopf fibration of $S_3$, one can directly calculate that $\tau_2= -1$;
see e.g. \cite{HHM}.
It would be surprising if the $\tau_i$ were only nontrivial for fiber bundles with spherical 
fibers, so we expect rather that the Maslov terms are generally nontrivial.\hfill\qedsymbol

\end{example}

\appendix

\section{Orientations and intersection numbers}

In this appendix we establish conventions for orientation and intersection numbers. This is not meant to be a thorough treatise on every possible case that can occur in the stratified world, but rather the working through of the simplest manifold cases in order to establish compatibility of convention choices.

Let $M$ be an $m$-dimensional oriented $\bd$-manifold. We choose the orientation of $\bd M$ by adjoining an outward-pointing normal in the first component, i.e. if $x\in \bd M$, $e_1,\ldots, e_{m-1}$ is a basis for $T_x\bd M$, and $n\in T_xM$ is an ``outward pointing'' vector, then the ordered collection $\langle e_1,\ldots, e_{m-1}\rangle$ agrees with the orientation for $\bd M$ if and only if $\langle n, e_1,\ldots, e_{m-1}\rangle$ agrees with the orientation for $M$. This convention seems to agree with the standard conventions for simplices.

Suppose $\xi,\eta$ are cycles of complementary dimension in $\bd M$ in general position and intersecting generically at the point $x$. Then the contribution to the intersection number $\epsilon[\xi\pfa \eta]$ of the intersection at $x$ is $\pm 1$ according to whether a local basis for $\xi$ concatenated with a local basis for $\eta$ agrees or disagrees with the orientation at $T_x\bd M$. It makes sense to talk about local bases for $\xi$ and $\eta$ as generic intersections will occur in the interiors of oriented simplices. 

Suppose now that there is a chain $\Xi$ in $M$ with $\bd \Xi=\xi$ contained in $\bd M$. We may assume that in a neighborhood of $\bd M$, $\Xi$ looks like the chain $[0,1]\times \xi$ with the ``1'' end of the cylinder on the boundary (suitably simplicialized). Note that this gives the proper boundary $\bd ([0,1]\times \xi)=1\times \xi-0\times \xi$ with $1\times \xi=\xi\subset \bd M$. Note also that the $[0,1]$ component points in the direction of an outward pointing normal. Thus if $\xi$ and $\eta$ intersect at $x$, the intersection number contribution at $x$ of $\epsilon[\xi\pf \eta]$ in $\bd M$ is equal to the intersection number contribution at $x$ of $\Xi$ and $\eta$ in $M$. This is because the intersection number of $\Xi$ with $\eta$ in $M$ is determined by using the basis of $[0,1]\times \xi$ (i.e. the outward normal and then the basis of $\xi$) and then the basis for $\eta$. Since the normal comes at the beginning, there is agreement with how we expect to compare orientations in $M$ with those in $\bd M$. On the other hand, suppose $H$ is a chain in $M$ with $\bd H=\eta$ and that $H$ looks like  $[0,1]\times \eta$ in a neighborhood of $\bd M$. Then the intersection at $x$ of $\xi$ with $H$ compares with the basis for $M$ the basis obtained from $\xi$ then from the outward normal, then from $\eta$. So to compare properly with the intersection number of $\xi$ and $\eta$ in $\bd M$, we must move the normal to the front. This changes the orientation number by $(-1)^{|\xi|}$. So the intersection number of $\xi$ with $\eta$ in $\bd M$ is $(-1)^{|\xi|}$ times the intersection number of $\xi$ with $H$ in $M$. 

Summarizing, we have:
\begin{align*}
\bd \Xi\pfa_{\bd M}\eta&=\Xi\pfa_M \eta\\
\xi\pfa_{\bd M}\bd H &= (-1)^{|\xi|} \xi\pfa_M \eta.
\end{align*}

\providecommand{\bysame}{\leavevmode\hbox to3em{\hrulefill}\thinspace}
\providecommand{\MR}{\relax\ifhmode\unskip\space\fi MR }
\providecommand{\MRhref}[2]{%
  \href{http://www.ams.org/mathscinet-getitem?mr=#1}{#2}
}
\providecommand{\href}[2]{#2}

Some diagrams in this paper were typeset using the \TeX\, commutative
diagrams package by Paul Taylor.

\end{document}